\documentclass[12pt,leqno,fleqn]{amsart}
\usepackage[colorlinks,citecolor=red,pagebackref,hypertexnames=true]{hyperref}
\usepackage[backrefs,msc-links,nobysame,initials]{amsrefs}
\usepackage[normalem]{ulem}
\usepackage{amsmath,amstext,amsthm,amssymb,amsxtra}
\usepackage{txfonts,pxfonts} 
\usepackage{pgf,tikz}
\usepackage[color=green!15,bordercolor=red,linecolor=red!30]{todonotes}
\usepackage{graphicx}
\usepackage{transparent}
\usepackage{enumitem}
\usepackage{calc}
\usepackage{mathtools}
\mathtoolsset{showonlyrefs,showmanualtags}

\setlength{\textwidth}{16.6cm}
\setlength{\topmargin}{0cm}
\setlength{\oddsidemargin}{0cm}
\setlength{\evensidemargin}{0cm}
\setlength{\parskip}{11pt}
\allowdisplaybreaks

\swapnumbers 

\numberwithin{equation}{section}

\theoremstyle{plain}
\newtheorem{theorem}[equation]{Theorem}
\newtheorem{proposition}[equation]{Proposition}

\newtheorem{lemma}[equation]{Lemma}

\theoremstyle{definition}

\theoremstyle{remark}
\newtheorem{remark}[equation]{Remark}

\makeatletter
\@namedef{subjclassname@2010}{%
  \textup{2010} Mathematics Subject Classification}
\makeatother


\def\norm#1.#2.{\lVert#1\rVert_{#2}}
\def\Norm#1.#2.{\bigl\lVert#1\bigr\rVert_{#2}}
\def\NOrm#1.#2.{\Bigl\lVert#1\Bigr\rVert_{#2}}
\def\NORm#1.#2.{\biggl\lVert#1\biggr\rVert_{#2}}
\def\NORM#1.#2.{\Biggl\lVert#1\Biggr\rVert_{#2}}


\def\ip#1,#2,{\langle #1,#2\rangle}
\def\Ip#1,#2,{\bigl\langle#1,#2\bigr\rangle}
\def\IP#1,#2,{\Bigl\langle#1,#2\Bigr\rangle}

\def\abs#1{\lvert#1\rvert}
\def\Abs#1{\bigl\lvert#1\bigr\rvert}
\def\ABs#1{\biggl\lvert#1\biggr\rvert}

\def\N{\mathbb N}
\def\Z{\mathbb Z}
\def\R{\mathbb R}
\def\RR{\mathbb R _+}

\def\bmo{\textnormal{BMO}}

\def\P{\mathbf P}
\def\T{\mathbf T}

\def\size{\operatorname{size}}
\def\density{\operatorname{density}}
\def\vr{\mathcal{V}^r}

\def\eqdef{\stackrel{\mathrm{def}}{{}={}}}
\def\iffdef {\stackrel{\mathrm{def}}{{}\Leftrightarrow{}}}

\def\ind{\textnormal{\textbf 1}}
\newcommand{\Exp}[0]{\mathbb{E}}
\raggedbottom

\begin{document}
\title[A variation norm Carleson theorem for vector-valued Walsh-Fourier series]{A variation norm Carleson theorem for vector-valued Walsh-Fourier series}
\subjclass[2010]{Primary:  42B20 Secondary: 46E40  }

\author[T.~Hyt\"onen]{Tuomas P. Hyt\"onen}
\address{Department of Mathematics and Statistics, P.O.B.~68 (Gustaf H\"all\-str\"omin katu~2b), FI-00014 University of Helsinki, Finland}
\email{tuomas.hytonen@helsinki.fi}
\thanks{T.H. and I.P. are supported by the European Union through the ERC Starting Grant ``Analytic-probabilistic methods for borderline singular integrals''.
T.H. is also supported by the Academy of Finland, grants 130166 and 133264 and I.P. also supported by the Academy of Finland, grant 138738.}

\author[M.T. Lacey]{Michael T. Lacey}
\address{School of Mathematics,
Georgia Institute of Technology,
Atlanta GA 30332 }
\email{lacey@math.gatech.edu}
\thanks{M.L. supported in part by the NSF grant 0968499, and a grant from the Simons Foundation (\#229596 to Michael Lacey). }

\author[I. Parissis]{Ioannis Parissis}
\address{Department of Mathematics, P.O.B~ 11100 (Otakaari 1 M), FI-00076 Aalto University, Finland}
\email{ioannis.parissis@gmail.com}


\begin{abstract}
We prove a variation norm Carleson theorem for Walsh-Fourier series of functions with values in certain UMD Banach spaces, sharpening a recent result of Hyt\"onen and Lacey. They proved the pointwise convergence of Walsh-Fourier series of $X$-valued functions under the qualitative hypothesis that $X$ has some finite tile-type $q<\infty$, which holds in particular if $X$ is intermediate between another UMD space and a Hilbert space. Here we relate the precise value of the tile-type index to the quantitative rate of convergence: tile-type $q$ of $X$ is ``almost equivalent'' to the $L^p$-boundedness of the $r$-variation of the Walsh-Fourier sums of any function $f\in L^p([0,1);X)$ for all $r>q$ and large enough $p$. 
\end{abstract}

\maketitle

\section{Introduction} The celebrated theorem of Carleson on the pointwise convergence of Fourier series, \cite{Car}, asserts that the partial sums of the Fourier series of an $L^p$-function converge almost everywhere to the function for $1<p<\infty$. The usual strategy to prove this result is to show that the \emph{Carleson operator}, that is, the maximal partial sums of the Fourier series of a function, is bounded on the corresponding $L^p$-space. Subtle refinements of the Carleson theorem \cite{CLTT} involve more refined notions than the $\ell^\infty$-norm of the partial sums. One of these notions is the variation norm Carleson theorem, proved in \cite{OSTTW}, which in particular shows the pointwise convergence of the Fourier series of $f$ without having to resort to a dense subset where this convergence can be easily exhibited. Weighted versions of the corresponding results for Fourier and Walsh-Fourier series have recently appeared in \cites{DOLA,DOLA1}.

We point out that, in the scalar case, variational  bounds for the Hilbert transform as well as for more general Calder\'on-Zygmund operators have been obtained for example in \cite{CJRW} and \cite{CJRW1}. In \cite{JSW} the authors study more singular operators such as averages along lower dimensional sets and truncations of singular integrals with rough kernels. However, there is no complete characterization of the Calder\'on-Zygmund kernels, say, that give rise to operators that obey variational bounds. The question is subtle since a variational bound immediately implies the pointwise converge of the truncations of singular integrals and it is known that this convergence fails for some operators\footnote{The study of variational bounds for quite general Calder\'on-Zygmund operators is addressed systematically in the paper \cite{TTX} which appeared while the current paper was under review.}. In the vector-valued setup the information is even scarcer. For example it is natural to ask whether the Hilbert transform of a function with values in a UMD Banach space obeys variational bounds. That would be the vector-valued analog of the main result in \cite{CJRW} and, to the best of our knowledge, it is not currently known.

A necessary condition for the validity of Carleson's theorem for functions with values in a Banach space is known to be that the target Banach space $X$ has the UMD property: $X$-valued martingale differences converge unconditionally in the space. See for example \cites{HL1,Rdf:LNM,Rdf:Studia}.  The first results on Carleson's theorem for vector-valued functions appear in \cite{Rdf:LNM} and \cite{Rdf:Studia} with the additional hypothesis that the target Banach space $X$ is a UMD space with an unconditional basis, or more generally, that it is a UMD lattice. The corresponding result for Walsh-Fourier series was proved for UMD lattices  in \cite{Weisz:Vilenkin}. Recently in \cite{PSX}, Parcet, Soria and Xu  proved the weaker statement that the partial Fourier series of $f\in L^p(\mathbb T;X)$, $p>1$, satisfy $S_Nf(x)=o(\log\log N)$ for a.e. $x$, in general UMD spaces $X$, thus taking a step away from the lattice hypothesis. The first two authors of the current paper finally proved the vector valued Carleson theorem for all known examples of UMD spaces in \cite{HL1}. Besides the UMD hypothesis, which is necessary, the proof in \cite{HL1} is based on a certain key assumption on the UMD space, namely that it has \emph{finite tile-type}. The \emph{tile-type} assumption is a hypothesis of probabilistic flavor on the geometry of $X$, which is easily seen to be stronger than usual cotype, although we do not know at this moment if it is strictly stronger than the UMD property, which is known to imply finite cotype.

The tile-type hypothesis has been introduced in \cite{HL} where it is shown that finite-tile type implies that the partial sums of Walsh-Fourier series converge to the function almost everywhere. Another hypothesis of similar flavor has been used in \cite{HL1} to show the corresponding result in the trigonometric case. In these two works, the prototypical examples of UMD Banach spaces with finite tile-type are \emph{the intermediate spaces} $X=[Y,H]_\theta$, that is, spaces $X$ which are complex interpolation spaces between some UMD Banach space $Y$ and a Hilbert space $H$. This class of Banach spaces includes for example all the UMD lattices but also the Schatten ideals $C_p$, $1<p<\infty$, and in general all examples of UMD spaces that are currently known, which are not necessarily function lattices. However the tile-type turns out to be a qualitative hypothesis for the Carleson theorem: the exact value of the tile-type is irrelevant, it is only needed that it is finite. This changes dramatically when one looks at models for vector-valued bilinear Hilbert transforms, \cite{HLP}. There the exact value of the tile-types of the Banach spaces involved play a crucial role in the range of inequalities one can prove, and in fact one needs to stay `close enough' to the Hilbert space tile-type $q=2$ to get \emph{any} bounds on the Walsh model of the bilinear Hilbert transform\footnote{There is another striking detail, that one can formulate vector-valued results that include non-UMD Banach spaces; see Silva \cite{Silva:BP}.}.

In this paper we take up the investigation initiated in \cite{HL} and continued in \cites{HL1,HLP}, concerning the vector-valued extensions of Carleson's theorem and related issues, where the lattice assumption is completely avoided. In particular, this paper is very much in the spirit of \cite{HL} since we study the variant of Carleson's theorem, due to Billard \cite{Billard}, for Walsh-Fourier series. Building on the results from \cites{HL,HL1}, our purpose is to investigate whether the finite tile-type hypothesis is also \emph{necessary} for the validity of a vector-valued Carleson theorem. We provide a step towards this direction by characterizing the UMD Banach spaces for which the finite tile-type assumption is true in terms of the boundedness of a variational Carleson operator. In particular we show that a variation norm version of Carleson's theorem for vector-valued Walsh-Fourier series, in the spirit of \cites{OSTTW,DOLA}, is valid if and only if the UMD Banach space satisfies the finite tile-type hypothesis. 

Furthermore, we manage to quantify the relation between the tile-type of the Banach space  and the variation index in the variational Carleson theorem. Although this is not possible in the usual formulation of the Carleson theorem, the variational variant gives the correct framework for such a quantification. Thus the variation index, which can be viewed as a quantification of the rate of convergence of the partial Walsh-Fourier series, is intimately related to the tile-type of the UMD Banach space under consideration.  We know from \cite{HL1} that tile-type implies Carleson's theorem. However we now see that the tile type condition is essentially characterized by the validity of a variation norm Carleson theorem on a UMD Banach space. The question whether Carleson's theorem is valid on an arbitrary UMD Banach space remains open however.  Our method and general strategy of proof is along the lines of \cites{HL,DOLA,OSTTW}, using the time-frequency analysis techniques and arguments introduced in \cite{LT:MRL}.

The first thing we need to do in order to formulate our main results is to describe what is the variation norm of a sequence. This norm plays a central role throughout the paper.

Let $1\leq r \leq \infty$ and $x=\{x_n\}_{n\in\mathbb N}$ be a sequence of elements of a Banach space $(X,|\cdot|)$. The $r$-variation of the sequence $x$ is defined as 
\begin{align*}
\norm x.\vr(X) . \coloneqq\sup_K \sup_{ N_0 < \cdots < N_K}\bigg( \sum_{j=1} ^{K} \abs {x_{N_j}-x_{N_{j-1}}} ^r\bigg)^\frac{1}{r}.
\end{align*}
with the usual modification when $r=\infty$. Observe that the $\mathcal V^\infty$-norm is essentially the $\ell^\infty$-norm.
If $\{f_n\}_{n\in\mathbb N}$ is a sequence of $X$-valued functions, $f_n:\RR\to X$, we will also write
\begin{align*}
\norm f_n.L^p(\RR;\vr(X)). \coloneqq \bigg(\int_{\RR} \norm f _n (x).\vr(X). ^p dx\bigg)^\frac{1}{p}.
\end{align*}

Now for $f\in L^p([0,1);X)$ for some $1< p <\infty$ consider the partial Walsh sums
\begin{align*}
S_Nf(x)\coloneqq \sum_{n=0} ^{N-1} \ip f,w_n,w_n(x),
\end{align*}
where $w_n$ is the $n$-th Walsh function on $[0,1)$. Precise definitions will be given in Section \ref{s.packets}.
The variational Carleson operator is the non-linear operator $\|S_N f(x)\|_{\vr}$. As we shall see in detail in \S~\ref{s.linearization} it can be controlled in the form
\begin{align*}
 \|S_N f(x)\|_{\vr}\lesssim C_{r,\P_1} f(x) + \tilde C_{r,\P_2}f(x)
\end{align*}
where $C_{r,\P_1}, \tilde C_{r,\P_2}$ are two linear operators defined in \eqref{e.crp} in terms of notions from time-frequency analysis which will be introduced in \S~\ref{s.packets}. The point of introducing these operators is that they linearize and control the variational Carleson operator, and are more amenable to time-frequency analysis. Our main theorem characterizes the tile-type of a Banach space in terms of the boundedness of the operator $C_{r,\P_1}$ or the operator $\tilde C_{r,\P_2}$.
\begin{theorem}\label{t.main_new} Let $q\in[2,\infty)$ and $X$ be a Banach space. Suppose that the operator $C_{r,\P} $, or the operator $\tilde C_{r,\P}$, satisfies
\begin{align}
 \norm C_{r,\P} f. L^r(\RR;X). \lesssim \norm f.L^r(\RR;X). ,
\end{align}
whenever $q<r< \infty$. Then $X$ has tile-type $\tau$ for all $\tau>q$ and, a fortiori, $X$ has cotype $\tau$ for all $\tau>q$. Conversely, suppose that the Banach space $X$ has tile-type $\tau$ for all  $\tau >q $. Then for every $f\in L^p(\R_+;X)$,
\begin{align*}
	\norm   C_{r,\P_1} f. L^p(\R_+;X).+ \norm \tilde C_{r,\P_2} f. L^p(\R_+;X).\lesssim \norm f.L^p(\R_+;X).,\quad \text{for}\quad \max(q,p'(q-1))<r<\infty,
\end{align*}	
with the implicit constant depending only upon $p,r,q$ and the space $X$.
\end{theorem}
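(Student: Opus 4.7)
The statement has two directions. The easier one is the necessity: we must show that the $L^r$-boundedness of $C_{r,\mathbf{P}}$ (or $\tilde C_{r,\mathbf{P}}$) on $L^r(\mathbb{R}_+;X)$ forces tile-type $\tau$ on $X$ for every $\tau>q$. The real content of the theorem is the converse, namely the quantitative vector-valued variational bound, and it is there that time-frequency analysis must be deployed in the spirit of \cites{HL,HL1,OSTTW,DOLA}. I would treat the two implications separately.

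\textbf{Necessity.} For this half the strategy is to build a family of test functions tailored to a prescribed finite collection of tiles $\mathbf{P}$ of the kind that enters the definition of tile-type, and evaluate $C_{r,\mathbf{P}} f$ explicitly. Choosing $f$ to be a linear combination $\sum_{s\in\mathbf{P}} x_s\,w_s$ of Walsh wave packets with coefficients $x_s\in X$, the linearised operator $C_{r,\mathbf{P}}$ reduces on the relevant spatial region to a sum of tile-indexed increments. By selecting the stopping times defining the $r$-variation to coincide with the natural frequency ordering of $\mathbf{P}$, the quantity $\|C_{r,\mathbf{P}} f\|_{L^r(\mathbb{R}_+;X)}$ becomes a lower bound for exactly the kind of randomised $X$-valued sum that defines tile-type. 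Running this argument with increasingly large tile configurations and letting $r\downarrow q$ yields tile-type $\tau$ for every $\tau>q$; cotype $\tau$ is then automatic since tile-type implies cotype. An analogous construction works for $\tilde C_{r,\mathbf{P}}$.

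\textbf{Sufficiency.} Here I would follow the standard time-frequency architecture: decompose the tile collection $\mathbf{P}$ into trees, prove a single tree estimate for $C_{r,\mathbf{P}_1}$ and $\tilde C_{r,\mathbf{P}_2}$, and then assemble the global bound by means of size and density lemmas. The single tree estimate is the place where UMD and tile-type $\tau>q$ both intervene: on a tree one reduces, via an Abel-type rearrangement that exploits the product structure of Walsh wave packets, to controlling the $r$-variation of a martingale-like $X$-valued sequence, which can be dominated using a UMD square-function bound combined with the tile-type inequality. This should yield a bound proportional to $\size(\mathbf{T})\cdot\density(\mathbf{T})^{1/\tau}$ (or the appropriate analogue) for any $\tau>q$, and this is exactly the shape that feeds into the iterative size/density decomposition. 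The interpolation of size and density counting arguments, carried out as in \cite{HL}, then produces the global $L^p$ bound, with the arithmetic of the exponents giving precisely the restriction $r>\max(q,p'(q-1))$: the $q$ is forced by the single tree estimate, while $p'(q-1)$ emerges from balancing the size/density exponents against the duality exponent $p'$ when summing over trees.

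\textbf{Main obstacle.} The key difficulty is the tree estimate for $C_{r,\mathbf{P}_1}$ and $\tilde C_{r,\mathbf{P}_2}$. In the scalar case of \cite{OSTTW,DOLA} one can lean on the $L^2$ orthogonality of tiles in a tree; in the Banach space setting this must be replaced by a UMD-based $r$-variational square function estimate that interacts correctly with the tile-type constant, and it is this interaction that dictates the sharp relationship between $q$ and $r$. Getting the dependence on $r$ and $q$ in the tree estimate tight, so that the subsequent summation over trees still closes up at the claimed range $\max(q,p'(q-1))<r<\infty$, is the delicate point that I expect will require the most care.
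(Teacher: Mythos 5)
Your overall architecture for the sufficiency half (tree decomposition, single-tree estimate, size/density selection) matches the paper's, but both halves of your sketch have genuine gaps. For the necessity direction, the tile-type inequality must be verified for \emph{every} $f\in L^q(\RR;X)$ and every good collection of trees, so testing $C_{r,\P}$ on special wave-packet combinations $f=\sum_s x_s w_s$ does not suffice (the left-hand side of the tile-type inequality only sees the coefficients $\ip f,w_{P_d},$, but the right-hand side is $\|f\|_{L^q(X)}$, and you cannot project onto the span of the wave packets without already knowing a UMD-type bound); moreover, tile-type is not a randomised (Rademacher-type) sum, so no ``randomised $X$-valued sum'' appears and no limiting argument $r\downarrow q$ is needed. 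The actual mechanism is purely combinatorial: for a \emph{good} collection $\{\T_j\}$ with tops $T_j$ one defines $N_j(x)$ to be the center of $\omega_{T_{k}}$ for the last $k\le j$ with $x\in I_{T_k}$; these form, for each $x$, an increasing admissible sequence for which the indicator conditions $N_j\in[\omega_{P_u})$, $N_{j-1}\notin\mathring\omega_P$ select exactly the bitiles of $\T_j$ containing $x$ (this is Lemma \ref{l.good}, and it is precisely why the paper's tile-type is formulated only over good collections). Hence the tile-type quantity for an arbitrary $f$ is pointwise dominated by $C_{r,\P}f$, and the $L^r$ bound with $r=\tau$ gives tile-type $\tau$ directly.

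For the sufficiency direction you misplace where the two hypotheses enter, and you are missing the key analytic tool. In the paper the tile-type hypothesis is used \emph{only} in the size lemma (to count the tops of the selected trees, which by construction form a good collection); the single-tree estimate does not use tile-type as such, but rather UMD together with the vector-valued L\'epingle inequality of Pisier--Xu, which requires martingale cotype $q$ (a consequence of tile-type, Proposition \ref{p.cotype}) and is exactly what controls the $r$-variation of the dyadic martingale averages arising on the up-part of the tree; a ``UMD square-function bound combined with the tile-type inequality'' does not control $r$-variation, and it is L\'epingle that forces $r>q$ at the tree level. The correct tree bound is $\size(\T)\,\density(\T)\,|I_T|^{1/s}$ for $1\le s\le r'$, not $\size(\T)\,\density(\T)^{1/\tau}$, and the density must be defined through the linearization of the variational operator (measurable $K$, $N_j$, and dual weights $a_j$ with $\sum_j|a_j|^{r'}=1$), which your sketch omits entirely; without that $r'$-weighted density the exponent bookkeeping cannot produce $r>p'(q-1)$, which in the paper comes from summing $2^{-n}\min(|F|,2^{n/q}|F|^{1/q})\min(1,2^{n/r'})$ over $n$ in the case $|F|\le 1$, using $1-\frac{q-1}{r}>\frac1p$. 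Finally, the global assembly also needs BMO-type control on the counting function of tree tops (replacing the scalar ``second pass through the size lemma''), which is another ingredient your outline does not account for.
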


As a corollary we conclude a variational norm version of Carleson's theorem for vector-valued Walsh-Fourier series:
\begin{theorem}\label{t.main1}
Given $q\in[2,\infty)$, let $X$ be a Banach space which has tile-type $\tau$ for all  $\tau >q $. Suppose that
\begin{align*}
  	\max(q,p'(q-1))<r<\infty.
\end{align*}
Then for every $f\in L^p([0,1);X)$,
\begin{align*}
	\norm S_N f. L^p([0,1);\vr(X)).\lesssim \norm f.L^p([0,1);X).,
\end{align*}	
with the implicit constant depending only upon $p,r,q$ and the space $X$.
\end{theorem}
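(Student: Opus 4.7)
The plan is to deduce Theorem \ref{t.main1} as a direct corollary of Theorem \ref{t.main_new} combined with the pointwise linearization of the variational Carleson operator announced in \S\ref{s.linearization}. Recall from that section that one has the pointwise bound $\norm S_N f(x).\vr(X). \lesssim C_{r,\P_1} f(x) + \tilde C_{r,\P_2} f(x)$ for appropriate collections of tiles $\P_1,\P_2$. Once this inequality is in hand, the theorem reduces to taking $L^p$-norms and invoking the second half of Theorem \ref{t.main_new}.

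More concretely, I first extend $f\in L^p([0,1);X)$ by zero to a function on $\RR$, which preserves the $L^p$-norm. Raising the pointwise bound above to the $p$-th power and integrating over $[0,1)$ yields
\begin{equation*}
\norm S_N f.L^p([0,1);\vr(X)). \lesssim \norm C_{r,\P_1} f.L^p(\RR;X). + \norm \tilde C_{r,\P_2} f.L^p(\RR;X)..
\end{equation*}
Under the standing hypothesis that $X$ has tile-type $\tau$ for every $\tau>q$ and that $\max(q,p'(q-1))<r<\infty$, the converse direction of Theorem \ref{t.main_new} bounds each summand on the right by $\norm f.L^p(\RR;X). = \norm f.L^p([0,1);X).$, which is what is claimed.

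The main obstacle---and essentially the only substantive content in this corollary---is the linearization itself, which must be supplied before this argument runs. The point is that $S_N f$ must be rewritten as a sum over time-frequency tiles adapted to the Walsh phase plane, and since a generic stopping index $N$ is not a dyadic integer, its binary expansion has to be exploited to split the partial sum into a telescoping chain of dyadic pieces. Measuring the $r$-variation over $N$ then naturally produces two collections of tiles: one handling the coarse ``dyadic skeleton'' coming from the most significant binary digits of $N$, and one handling the finer intermediate scales, which correspond respectively to $C_{r,\P_1}$ and $\tilde C_{r,\P_2}$. Once the decomposition is carried out so that the displayed pointwise bound holds, the passage to Theorem \ref{t.main1} is a one-line consequence of Theorem \ref{t.main_new}.
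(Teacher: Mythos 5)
Your reduction step is fine and is exactly how the paper finishes: extend $f$ by zero, dominate the variation norm pointwise by $C_{r,\P_1}f+\tilde C_{r,\P_2}f$, and invoke the boundedness of these operators (Theorem \ref{t.secondary}, i.e.\ the second half of Theorem \ref{t.main_new}) in the stated range of $p,r,q$. The problem is that the pointwise bound $\|S_Nf(x)\|_{\vr(X)}\lesssim C_{r,\P_1}f(x)+\tilde C_{r,\P_2}f(x)$ is not a citable prior fact: the display in the introduction is only an announcement, and it is proved nowhere except inside the proof of Theorem \ref{t.main1} itself. So ``recalling'' it is circular, and, as you yourself note, it is the only substantive content of the theorem; your proposal leaves it unproved, and the sketch you give of how it would go does not describe a working mechanism.

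Concretely, what has to be shown is the following. For integers $\zeta<\zeta'$ one writes $S_{\zeta'}f-S_\zeta f=\sum \ip f,w_P,w_P$, the sum running over tiles with $I_P\subset[0,1)$ whose frequency intervals are the \emph{maximal dyadic subintervals} of $[\zeta,\zeta')$ (Thiele's Walsh phase-plane formula). Each such interval is either the upper or the lower half of its dyadic parent, i.e.\ the up-part or the down-part of a bitile, which splits the sum into two pieces. The crux, which your ``coarse dyadic skeleton versus finer intermediate scales'' picture misses, is the characterization: a bitile $P$ with $I_P\subset[0,1)$ contributes to the down-piece for the pair $(\zeta,\zeta')$ if and only if $\zeta\notin\mathring\omega_P$ and $\zeta'\in[\omega_{P_u})$, and to the up-piece if and only if $\zeta\in(\omega_{P_d}]$ and $\zeta'\notin\mathring\omega_P$. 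These are precisely the indicator conditions on $N_{j-1},N_j$ in the definitions of $C_{r,\P}$ and $\tilde C_{r,\P}$, and because they decouple into one condition on each endpoint, the variation sum over an arbitrary jump sequence $N_0<\cdots<N_K$ is dominated (after the triangle inequality in $j$ for the two pieces) by $(C_{r,\P_1}f)^r+(\tilde C_{r,\P_2}f)^r$ with \emph{fixed} collections $\P_1,\P_2$ (all bitiles with $I_P\subset[0,1)$), uniformly in the jump sequence. Without this verification there is no single pair of linear(izable) operators to which Theorem \ref{t.main_new} can be applied, so the proof is incomplete at its central point. Supplying this phase-plane identity and the endpoint characterization would make your argument coincide with the paper's proof.
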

Observe that for $q=2$ the restriction on the integrability exponent becomes $p>r'$ which is necessary in the scalar case of the Fourier analog of Theorem \ref{t.main1}, \cite{OSTTW}. Here we get a condition that becomes more stringent as $X$ `moves away' from the Hilbert space case, quantified by the tile-type.

In the special case that $X$ is an \emph{intermediate space}, that is $X=[Y,H]_\theta$ is a complex interpolation space between some UMD Banach space $Y$ and a Hilbert space $H$, and $0<\theta<1$, it was shown in \cite{HL} that $X$ has tile-type $q=2/\theta$. Thus Theorem \ref{t.main1} immediately applies to all intermediate spaces of this type. However, arguing directly by interpolation we get a slightly stronger theorem:
\begin{theorem}\label{t.main2} Suppose that $X\coloneqq [Y,H]_\theta$, $0<\theta<1$, is a complex interpolation space between a UMD Banach space $Y$ and a Hilbert space $X$. Set $q\coloneqq 2/\theta$. Suppose that
\begin{align*}
     \max(q,p'q/2)<r<\infty.
\end{align*}
Then for every $f\in L^p([0,1);X)$ we have that
\begin{align*}
\norm S_N f . L^p( [0,1);\vr(X)).\lesssim \|f\|_{L^p([0,1);X)}.
\end{align*}
where the implicit constant depends only on $p,r,q$ and the space $X$.
\end{theorem}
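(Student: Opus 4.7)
My plan is to deduce Theorem~\ref{t.main2} by complex interpolation between two endpoint bounds, exploiting that $X = [Y,H]_\theta$ can be reached by interpolation between $H$ and the intermediate space $[Y,H]_{\theta_0}$ for any auxiliary $\theta_0 \in (0,\theta)$. The relevant reiteration identity for the complex method is $\bigl[[Y,H]_{\theta_0}, H\bigr]_\mu = [Y,H]_{\theta_0 + \mu(1-\theta_0)}$, so that the choice $\mu = (\theta - \theta_0)/(1-\theta_0) \in (0,1)$ recovers $X$ at the interpolation parameter~$\mu$.

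At the Hilbert endpoint, the scalar variation norm Carleson theorem of \cite{OSTTW}, applied coordinatewise in $H$, gives
\begin{equation*}
\norm S_N f. L^p([0,1); \mathcal V^{r_1}(H)). \lesssim \norm f. L^p([0,1); H).
\end{equation*}
for every $r_1 > \max(2, p')$. At the second endpoint, the intermediate space $[Y,H]_{\theta_0}$ has tile-type $q_0 := 2/\theta_0$ by \cite{HL}, so Theorem~\ref{t.main1} yields
\begin{equation*}
\norm S_N f. L^p([0,1); \mathcal V^{r_0}([Y,H]_{\theta_0})). \lesssim \norm f. L^p([0,1); [Y,H]_{\theta_0}).
\end{equation*}
for every $r_0 > \max(q_0, p'(q_0-1))$.

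Given a target exponent $r > \max(q, p'q/2)$, i.e.\ $r\theta > \max(2, p')$, I pick $r_1 \in (\max(2, p'), r\theta)$ and then choose $\theta_0$ slightly above the critical value $\theta_0^* := (\theta r - r_1)/(r - r_1)$, which lies in $(0, \theta)$ under the above constraints. A direct algebraic check using $1/r = (1-\mu)/r_0 + \mu/r_1$ shows that $r_0 \to \infty$ as $\theta_0 \to (\theta_0^*)^+$, while $q_0 = 2/\theta_0$ remains bounded; hence the constraint $r_0 > \max(q_0, p'(q_0-1))$ is eventually satisfied. Complex interpolation of vector-valued $L^p$-spaces, combined with the compatible interpolation of variation-norm sequence spaces, then produces $\norm S_N f. L^p([0,1); \mathcal V^r(X)). \lesssim \norm f. L^p([0,1); X).$, which is the conclusion.

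The main technical obstacle is the complex interpolation identity for vector-valued variation-norm sequence spaces, namely the analogue of $\bigl[\ell^{r_0}(Z_0), \ell^{r_1}(Z_1)\bigr]_\mu = \ell^{r_\mu}\bigl([Z_0, Z_1]_\mu\bigr)$ with $\mathcal V^r$ in place of $\ell^r$. Once this identity is in hand the remainder of the argument consists only of invoking reiteration and tracking the parameters $\theta_0$, $\mu$, $r_0$, $r_1$; no further time-frequency analysis is needed, since both endpoint estimates are already supplied by \cite{OSTTW} and Theorem~\ref{t.main1}.
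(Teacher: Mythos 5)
Your strategy is sound and, at bottom, it is the same one the paper uses: write $X$ by reiteration as $[\,[Y,H]_{\theta_0},H]_\mu$ and complex-interpolate the linear map $f\mapsto\{S_Nf\}_N$ between two vector-valued variation bounds, using an embedding of the interpolated variation space into $\mathcal V^{r}(X)$. The difference is the non-Hilbert endpoint: the paper takes the $\infty$-variation (Carleson maximal) bound for $[Y,H]_\delta$ from \cite{HL}, valid for all $1<p<\infty$, and lets $\delta\to 0$, whereas you take the finite $r_0$-variation bound of Theorem \ref{t.main1} for $[Y,H]_{\theta_0}$ and let $r_0\to\infty$ as $\theta_0\downarrow\theta_0^{*}$. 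Your parameter bookkeeping checks out: $\theta_0^{*}=(\theta r-r_1)/(r-r_1)\in(0,\theta)$ when $\max(2,p')<r_1<r\theta$, $\mu$ increases to $r_1/r$ as $\theta_0\downarrow\theta_0^{*}$, hence $r_0\to\infty$ while $q_0=2/\theta_0$ stays bounded, and the admissible $r_1$ exist exactly when $r\theta>\max(2,p')$, i.e. $r>\max(q,p'q/2)$; so you recover the full claimed range. What the paper's choice buys is simplicity: the $\infty$-variation endpoint carries no constraint linking $p$ to the variation exponent, so no limiting analysis in $r_0$ is needed; what your choice buys is a proof whose non-Hilbert input is entirely the finite-variation estimate of the present paper.

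Two justifications in your write-up need repair, though neither is fatal. First, the Hilbert endpoint does not follow from \cite{OSTTW} ``coordinatewise'': with $H=\ell^2$ and $f=(f_k)_k$, Minkowski's inequality (for $r_1\geq 2$) only gives the pointwise bound $\lVert S_Nf(x)\rVert_{\mathcal V^{r_1}(H)}\leq\bigl(\sum_k\lVert S_Nf_k(x)\rVert_{\mathcal V^{r_1}}^{2}\bigr)^{1/2}$, so you would need an $\ell^2$-valued (Fefferman--Stein type) square-function extension of the scalar variational Carleson inequality, a genuinely stronger statement than the scalar theorem. The bound you need is nonetheless available: Hilbert spaces have tile-type $2$, so it is the case $q=2$ of Theorem \ref{t.main1}, or one repeats the scalar proof verbatim in the $H$-valued setting, which is how the paper justifies this endpoint. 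Second, for the variation spaces you should claim, and you only need, the inclusion $[\mathcal V^{r_0}(Z_0),\mathcal V^{r_1}(Z_1)]_\mu\subset\mathcal V^{r}([Z_0,Z_1]_\mu)$ rather than an identity; it follows by testing against the linear maps $x\mapsto(x_{N_j}-x_{N_{j-1}})_{j\leq K}$ for each fixed finite increasing sequence $N_0<\cdots<N_K$, each of norm at most one from $\mathcal V^{r_i}(Z_i)$ into the corresponding finite $\ell^{r_i}(Z_i)$, and interpolating these; this is the same mechanism as the inclusion from \cite{PX}*{p.~501} that the paper invokes in the case $r_0=\infty$.
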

So far the only Banach spaces which are known to have finite tile-type are exactly the complex interpolation spaces of Theorem \ref{t.main2}. From this 
point of view, the weaker Theorem \ref{t.main1} seems uninteresting compared to Theorem \ref{t.main2}. However, the formulation of Theorem \ref{t.main1}
only assumes finite tile-type. This provides an indication that the finite tile-type of a Banach space $X$ could be a strictly weaker 
hypothesis than that of $X$ being a complex interpolation space.

The Fourier scalar variational Carleson theorem is related to a number of topics, including 
 variational estimates for singular integrals \cite{JSW}; refined estimates in ergodic theory \cites{CLTT,MR2417419}; 
maximal inequalities \cite{1110.1070}, and approaches to extensions of results of Christ-Kiselev \cite{MR1952927} in spectral theory, see 
\cite{OSTTW}*{Appendix C}. Some of these continue to be under active development \cites{1110.1067,1203.5135}.  At some point, these topics might 
be ripe for investigation in the vector valued case.  

The rest of the paper is organized as follows. In section \ref{s.packets} we introduce tiles and trees in the time-frequency plane and define the
corresponding wave packets in terms of appropriate Walsh functions. In section \ref{s.tiletype} we review the definition of the tile-type of a Banach space, 
adjusted to the needs of the present paper. This definition is slightly weaker than the one in \cite{HL} but philosophically it is the 
same. We also discuss several structural properties of the trees that one needs to consider in the definition of the tile-type and show that tile-type 
$q$ implies martingale cotype $q$ for any Banach space $X$. We also recall the 
vector-valued version of L\'epingle's inequality which will play an important role later on in the proof. In section  \ref{s.linearization} we linearize the variational Carleson operator and introduce its variants $C_{r,\P}$ which characterize the tile-type of the Banach space $X$. We eventually linearize all our operators and reduce the main theorem to the statement of 
Propositions \ref{p.converse} and \ref{p.main}. In Sections \ref{s.tree} to \ref{s.summingup} we introduce all the necessary machinery from time-frequency analysis and prove Proposition \ref{p.main}. The last section \ref{s.inter} is devoted to the proof of Theorem \ref{t.main2}. The proof uses complex interpolation between the 
full range of $r$-variation inequalities, valid in any Hilbert space, and the $\infty$-variation bounds for intermediate spaces from \cite{HL}.

\section{Notation}\label{s.notation} Throughout the text $c,C$ will denote generic positive constants that might change even in the same line of text. We 
write $A\lesssim B$ if $A\leq c B$ for some numerical constant $c>0$ and $A\eqsim B$ if $A\lesssim B$ and $B\lesssim A$. In this paper the implicit constants that appear in various estimates may 
depend on the variation index $r$, the integrability index $p$, the tile-type index $q$ and the space $X$ itself, but we typically suppress this 
dependence since it is of no importance. We denote by $\N$ the set of non-negative integers and by $\RR$ the set of non-negative real numbers. Finally the 
dyadic intervals on the positive real line are denoted by $\mathcal D$. These are the intervals of the form $[n2^k,(n+1)2^k)$ where $n\in\N$ 
and $k\in\Z$. For any interval $\omega$ with endpoints $a<b$ we use the standard notation $\mathring{\omega}\coloneqq (a,b)$ for its interior as well as the notations 
$[\omega)\coloneqq [a,b)$ and $(\omega]\coloneqq (a,b]$. If no special notation is used by convention we use $\omega=[\omega)=[a,b)$.

For $k\in\Z$ we denote by $\Exp_k$ the conditional expectation with respect to dyadic intervals of length $2^k$:
\begin{align}\label{e.condk}
	\Exp_k f(x) \coloneqq\sum_{I\in\mathcal D, |I|=2^k} \frac{\ind_I (x)}{|I|} \int_I f(y)dy\eqqcolon\sum_{I\in\mathcal D, |I|=2^k} \Exp_If(x) ;
\end{align}
the dyadic maximal function is then $Mf(x)\coloneqq \sup_k |\Exp_kf(x)|$. Finally, we denote by $\bmo$ the dyadic $\bmo$ space on the positive real line, equipped with the norm
\begin{align*}
 \|f\|_{\bmo(\RR;X)}=\|f\|_\bmo \coloneqq \sup_{I\in\mathcal D} \frac{1}{|I|} \int_I \Abs{ f(x)-\langle f \rangle _I} dx,
\end{align*}
where $\langle f \rangle _I \coloneqq \frac{1}{|I|}\int_I f$. Note that, throughout the text, the notation $|\cdot|$ is used both for the absolute value as well as for the norm of the Banach space $X$, depending on context.

\section{Walsh wave packets, Tiles and Trees} \label{s.packets}

A \emph{tile} $P$ is a dyadic rectangle of area $1$ in the time-frequency plane $\RR\times \RR$, namely 
\begin{align}
P=I_P\times \omega_P= I_P\times \frac{1}{|I_P|}[n,n+1),\quad n\in\N,\ \ I_P\in\mathcal D,
\end{align}
If $P,P'$ are tiles we write $P\leq P'$ if $I_P\subset I_{P'}$ and $\omega_{P'} \subset \omega_{P}$. Likewise, a \emph{bitile} $P$ is a dyadic rectangle of area $2$:
\begin{align*}
P=I_P\times \omega_P=I_P\times \frac{2}{|I_P|}[n,n+1)=\bigcup_{v=0}  ^1  I_P\times \frac{1}{|I_P|}[2n+v,2n+v+1)\eqqcolon P_d \cup P_u.
\end{align*}
Thus each bitile $P$ has a `down-part' and an `up-part' which are also dyadic. Furthermore we write
\begin{align*}
	P\leq_u P' \iffdef P_u \leq P' _u \quad\mbox{and}\quad P\leq _d  P' \iffdef P_d\leq P'_d.
\end{align*}
The partial order for bitiles $P,P'$ is then defined as $P\leq P' \iffdef P\leq_u P' \text{ or } P\leq _d P'$. A tree $\T$ is a collection of bitiles $P$ for which there exist a top bitile $T$, which is not necessarily part of the collection, such that $P\leq T$ for all $P\in\T$. Note that in general a top of a tree is not uniquely defined. Similarly we say that $\T$ is an \emph{up-tree}  if $P\leq_u T$ for all $P\in\T$ and some top $T$ and a \emph{down-tree} if $P\leq _d T$ for all $P\in \T$. Trivially any tree can be decomposed into an up-tree and a down-tree:
\begin{align*}
\T=\T_u \cup \T_d,\quad \T_u\eqdef\{P\in\T: P\leq_u T\},\quad \T_d\eqdef \{P\in\T:P\leq_d\T\}.
\end{align*}

The Rademacher functions are defined as
\begin{align*}
r_i(x)\coloneqq \textnormal{sgn} \sin(2\pi \cdot 2^i x)=\sum_{k\in\N} \big( \ind_{2^{-i}[k,k+1/2)}(x)-\ind_{2^{-i}[k+1/2,k+1)}(x)    \big).
\end{align*}
If $n\in\N$ has binary expansion 
\begin{align*}
n=\sum_{i=0} ^\infty n_i 2^i,\quad n_i\in\{0,1\},
\end{align*}
we define the $n$-th Walsh function to be 
\begin{align*}
w_n(x)\coloneqq \prod_{i=0} ^\infty r_i(x)^{n_i}.
\end{align*}
Observe that we recover the Rademacher functions from the Walsh functions by means of $w_{2^i}(x)=r_i(x)$. It is well known and easy to see that the restrictions $\{w_n\ind_{[0,1)}\}_{n\in\N}$ form an orthonormal basis of $L^2(0,1)$.

With these definitions at hand we now associate to each tile $P\subset \RR\times \RR$  a \emph{wave packet} $w_P$ as follows. First we write the tile $P$ with respect to its time and frequency components:
\begin{align*}
P=I_P\times\omega_P = I_P\times \frac{1}{|I_P|} [n,n+1),\quad I_P\in\mathcal D, \quad n\in\N.
\end{align*}
The wave packet $w_P$ is now defined as
\begin{align*}
w_P(x)\coloneqq \frac{1}{|I_P|^\frac{1}{2}}\ind_{I_P}(x) w_n\big(\frac{x}{|I_P|}\big)\eqqcolon \frac{1}{|I_P|^\frac{1}{2}} w_P ^\infty (x).
\end{align*}
Observe that $w_P$ is $L^2$-normalized while $w_P ^\infty$ is $L^\infty$-normalized, hence the superscript $\infty$. The Haar functions are special cases of wave packets corresponding to the tiles of the form $P=I\times |I|^{-1}[1,2)$:
\begin{align*}
h_I(x)\coloneqq \frac{1}{|I|^\frac{1}{2}} \ind_I(x) r_0 \big(\frac{x}{|I|}\big)=w_{I\times|I|^{-1}[1,2)}(x).
\end{align*}
Given a bitile $P=P_d \cup P_u$ we use the notations $w_{P_u}$ and $w_{P_d}$ for the wave packets associated with the up-part and the down-part of the bitile $P$, respectively. 

If a collection of bitiles arises from a single up-tree then the following lemma gives a very useful description of the wave packets in terms of the simpler Haar functions. This lemma is taken from \cite{HL}*{Lemma 2.2} where we also refer the reader for the proof.
\begin{lemma}\label{l.haar}Let $\T$ be an up-tree with top $T$. For all $P\in \T$ we have
	\begin{align*}
		w_{P_d}(x)=\epsilon_{PT}\cdot w_{T_u} ^\infty(x) \cdot h_{I_P}(x),
	\end{align*}
where $w_{T_u} ^\infty$ is the $L^\infty$-normalized wave packet associated to the up-part of the top $T$ and $\epsilon_{PT}\in\{-1,+1\}$ is a constant factor that depends only on $P$ and $T$. In particular we have that
\begin{align*}
\ip f,w_{P_d},w_{P_d}=\ip f\cdot w_{T_u} ^\infty, h_{I_P},h_{I_P} \cdot w_{T_u} ^\infty.
\end{align*}
If $\T$ is a down-tree with top $T$ we have a symmetric statement:
	\begin{align*}
		w_{P_u}(x)=\tilde \epsilon_{PT}\cdot w_{T_d} ^\infty(x) \cdot h_{I_P}(x).
	\end{align*}
\end{lemma}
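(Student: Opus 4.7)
The plan is to establish the identity by direct computation exploiting two fundamental algebraic features of the Walsh system: the multiplicative law $w_a w_b = w_{a \oplus b}$ (with $\oplus$ the bitwise XOR on binary expansions), which is immediate from the product definition; and the dilation identity $w_{2^k c}(y) = w_c(2^k y)$, which follows from $r_{i+k}(y) = r_i(2^k y)$. Since every $w_n$ is periodic with period~$1$, evaluating at arguments outside $[0, 1)$ causes no issue.

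First I would fix notation: set $|I_P| = 2^{-k}|I_T|$, write $\omega_{T_u} = \frac{1}{|I_T|}[2n+1, 2n+2)$, and $\omega_{P_u} = \frac{1}{|I_P|}[2m+1, 2m+2)$. The hypothesis $P \leq_u T$ gives $I_P \subseteq I_T$ together with $\omega_{T_u} \subseteq \omega_{P_u}$; the latter translates, after rescaling both intervals by $|I_T|$, into the arithmetic relation $2n+1 = 2^k(2m+1) + j$ for a unique integer $j$ with $0 \leq j < 2^k$. Since $j < 2^k$, the binary expansions of $j$ and $2^k(2m+1)$ are disjoint, and the two algebraic identities above then yield, using $2^k \cdot (x/|I_T|) = x/|I_P|$, the splitting
\begin{equation*}
w_{2n+1}(x/|I_T|) = w_{2m+1}\bigl(x/|I_P|\bigr) \cdot w_j\bigl(x/|I_T|\bigr).
\end{equation*}

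The conceptual heart of the argument is the observation that the second factor is constant as $x$ ranges over $I_P$: the map $x \mapsto x/|I_T|$ carries $I_P$ into a dyadic interval of length $2^{-k}$, while $w_j$ for $j < 2^k$ is a product of Rademacher factors $r_i$ with $i \leq k-1$, each of which is constant on every dyadic interval of length $2^{-k}$. Denote this common value by $\epsilon_{PT} \in \{-1, +1\}$. Combining this with $h_{I_P}(x) = |I_P|^{-1/2} \ind_{I_P}(x) w_1(x/|I_P|)$, the trivial identity $\ind_{I_T} \ind_{I_P} = \ind_{I_P}$, and $(2m+1) \oplus 1 = 2m$, one obtains
\begin{equation*}
w_{T_u}^\infty(x) \cdot h_{I_P}(x) = \epsilon_{PT} \cdot |I_P|^{-1/2} \ind_{I_P}(x) w_{2m}(x/|I_P|) = \epsilon_{PT} \cdot w_{P_d}(x),
\end{equation*}
which is the claim. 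The main obstacle is the constancy step, which requires careful bookkeeping of the dyadic structure of the frequency subdivision; the rest is routine manipulation of binary expansions. The down-tree case is entirely symmetric: writing $2n = 2^{k+1}m + j'$ with $j' \in \{0, \ldots, 2^k - 1\}$, the analogous splitting of $w_{2n}$ combined with $(2m) \oplus 1 = 2m+1$ recovers $w_{P_u}$.
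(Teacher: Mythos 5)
Your proof is correct: the splitting $w_{2n+1}(x/|I_T|)=w_{2m+1}(x/|I_P|)\,w_j(x/|I_T|)$ via the XOR multiplication law and the dilation identity, together with the observation that $w_j(x/|I_T|)$ (a product of $r_i$, $i\le k-1$) is constant on the dyadic image of $I_P$, is exactly the standard argument; the paper itself gives no proof here, deferring to Lemma 2.2 of the cited Hyt\"onen--Lacey paper, and your computation matches that approach, including the symmetric down-tree case.
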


\section{Tile-type and cotype of a Banach space} \label{s.tiletype}
The notion of the tile-type of a Banach space $X$ was introduced in \cite{HL} to show that if a Banach space $X$ has finite tile-type then Carleson's theorem for Walsh-Fourier series is true for $X$-valued functions. More recently it was shown in \cite{HL1} that a  variant of the tile-type, adapted to the Fourier wave packets, also implies Carleson's theorem for the trigonometric system. Here we give a definition which is similar in spirit to that  in \cite{HL}*{\S3}. The main difference is that we only consider very special collections of trees in the definition of tile-type. These are essentially the trees generated by the selection algorithm in the size lemma, Lemma \ref{l.size}, and that lemma is the only place in the proof where the tile-type hypothesis is needed. In fact the reader is encouraged to briefly go through the statement and proof of the size lemma in order to gain some intuition on the definition that follows. The reason for giving this weaker but more complicated definition of tile-type is that it allows us to prove a partial converse of the variational Carleson theorem in Proposition \ref{p.converse}, namely that the $r$-variational boundedness of a Carleson-type operator implies that the space $X$ necessarily has tile-type $\tau$ for all $\tau>r$.

\subsection{Good collections of trees and tile-type of a Banach space}
We now describe the collections of trees that we want to consider in the definition of the tile-type. Let $\mathcal T=\{\T_j\}_j$ be a finite collection of up-trees, each consisting of finitely many bitiles, and set 
\begin{align*}
\P\coloneqq \{P:\ P\in\T_j\ \textrm{for some} \ j\}=\cup_j\T_j.
\end{align*}
Denote by $c(I)$ the center of some dyadic interval $I\in\mathcal D$. We will call the collection $\mathcal T$ \emph{u(p)-good} if it has the following property:

There is a reordering of the trees $\{\T_j\}_j$ and a choice of corresponding tops $\{T_j\}_j$ such that $\{c(\omega_{T_j})\}_j$ is an increasing sequence and
\begin{align*}
\T_j=\{P\in\P: P\leq_u T_j \ \textrm{and} \ P\nleq T_k\ \textrm {for all} \ k<j\}.
\end{align*}
There is a symmetric definition of a good collection of \emph{down-trees}, namely a collection $\mathcal T$ of down-trees will be called \emph{d(own)-good} if there is a reordering of the trees $\{\T_j\}_j$ and a choice of corresponding tops $\{T_j\}_j$ such that $\{c(\omega_{T_j})\}_j$ is a \emph{decreasing} sequence and
\begin{align*}
\T_j=\{P\in\P: P\leq_d T_j \ \textrm{and} \ P\nleq T_k\ \textrm {for all} \ k<j\}.
\end{align*}
We say that a Banach space $X$ has $u$-\emph{tile-type} $q$ if the estimate
\begin{align*}
	\bigg(\sum_{\T\in\mathcal T} \NOrm\sum_{P\in\T} \ip f,w_{P_d},w_{P_d}.L^q(\RR;X). ^q \bigg)^\frac{1}{q}\lesssim \|f\|_{L^q(\RR;X)},
\end{align*}
holds uniformly for all $u$-good collections $\mathcal T$. Similarly, we say that a Banach space $X$  has $d$-tile type $q$ if
\begin{align*}
	\bigg(\sum_{\T\in\mathcal T} \NOrm\sum_{P\in\T} \ip f,w_{P_u},w_{P_u}.L^q(\RR;X). ^q \bigg)^\frac{1}{q}\lesssim \|f\|_{L^q(\RR;X)},
\end{align*}
uniformly, for all $d$-good collections $\mathcal T$. It is actually not hard to see that the two definitions of tile-type, namely the one given with respect to $u$-good collections and the one given with respect to $d$-good collections, are equivalent. This is the content of the following lemma.

\begin{lemma}\label{l.updown} A Banach space $X$ has $u$-tile type $q$ if and only if it has $d$-tile type $q$.
\end{lemma}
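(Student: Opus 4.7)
The plan is to use Lemma \ref{l.haar} to translate both the $u$- and $d$-tile type inequalities into structurally identical statements about Haar projections, and then identify the two statements up to a $\pm 1$-valued modulation of the input function.

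First I would observe that, by Lemma \ref{l.haar}, for an up-tree $\T$ with top $T$,
\begin{align*}
\NOrm\sum_{P\in\T}\ip f,w_{P_d},w_{P_d}.L^q(\RR;X). = \NOrm \sum_{P\in\T}\ip f\cdot w^\infty_{T_u},h_{I_P},h_{I_P}.L^q(\RR;X).,
\end{align*}
where the equality uses that $|w^\infty_{T_u}|=\ind_{I_T}$ and that the inner Haar sum is supported in $I_T$, so multiplication by $w^\infty_{T_u}$ is a pointwise $\pm 1$ operation that preserves $L^q(\RR;X)$. The symmetric statement for a down-tree $\T$ with top $T$ yields the identical identity with $w^\infty_{T_u}$ replaced by $w^\infty_{T_d}$. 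Hence both tile-type hypotheses reduce to a uniform bound of the form $\bigl(\sum_j \|\Pi_{\T_j}(f\cdot W_j)\|_{L^q(\RR;X)}^q\bigr)^{1/q}\lesssim \|f\|_{L^q(\RR;X)}$, where $\Pi_{\T_j}g\coloneqq\sum_{P\in\T_j}\ip g,h_{I_P},h_{I_P}$ is a Haar projection and $W_j$ is the $L^\infty$-wave packet of either the up- or down-half of the top.

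Next I would construct, given a $d$-good collection $\{\T_j'\}$ of down-trees with tops $\{T_j'\}$, a matching $u$-good collection $\{\T_j\}$ of up-trees with tops $\{T_j\}$ sharing the \emph{same} underlying time-interval data: $I_{T_j}=I_{T_j'}$ and $\{I_P:P\in\T_j\}=\{I_P:P\in\T_j'\}$. Because the partial order on bitiles and the greedy selection defining a good collection are driven by the frequency ordering of the tops and the containment of $I_{T_j}$, reversing the ordering $j\mapsto J+1-j$ turns the decreasing sequence $c(\omega_{T_j'})$ into an increasing one; the symmetry of $\le_u$ and $\le_d$ under the identification $T\leftrightarrow T^\sharp$, where $T^\sharp$ is the bitile (at the same time scale) whose up-half coincides with the down-half of $T$ up to a one-step frequency shift, provides the required matching of tree structures. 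Once the collections are matched, the Walsh modulations $w^\infty_{T_{j,u}}$ and $w^\infty_{T_{j,d}'}$ differ only by the unimodular factor $r_0(\cdot/|I_{T_j}|)$, so the $u$-tile-type inequality applied to a suitably modulated function yields the $d$-tile-type inequality, and vice versa.

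The main obstacle will be handling the parity discrepancy: $w^\infty_{T_u}$ and $w^\infty_{T_d}$ carry Walsh indices of opposite parity, so there is no single dyadic bitile $T^\sharp$ with $T^\sharp_d=T_u$, and the identification of tops must be done at a coarser combinatorial level together with a frequency shift absorbed into a $\pm 1$-valued factor on $I_{T_j}$. Patching the per-tree factors $r_0(\cdot/|I_{T_j}|)$ into a single unimodular function $\phi$ with $|\phi|\le 1$ requires verifying that the top intervals $\{I_{T_j}\}$ in a good collection organize themselves hierarchically (nested or essentially disjoint); this structural fact follows from the greedy selection algorithm that produces good collections (as used in the size lemma, Lemma \ref{l.size}). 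Once the gluing is in place, the modulation preserves $\|f\|_{L^q(\RR;X)}$, and the equivalence of $u$- and $d$-tile type is established.
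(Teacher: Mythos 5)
Your first reduction (using Lemma \ref{l.haar} to rewrite each tree sum as a Haar projection of a modulated function) is fine and is indeed how the paper exploits Lemma \ref{l.haar}; the gaps are in the two steps that carry the actual content. First, you never exhibit the $u$-good collection to which the $u$-tile-type hypothesis is to be applied. That hypothesis quantifies over collections of genuine bitiles, and the goodness conditions $P\leq_u T_j$, $P\nleq T_k$ for $k<j$ involve the frequency components of the member bitiles $P$, not just their time intervals; you only describe a correspondence of tops ($T\leftrightarrow T^\sharp$, a one-step frequency shift) and of time-interval data, define no image for the members $P$, and do not check that the greedy exclusions survive. Moreover, reindexing $j\mapsto J+1-j$ changes which exclusions $P\nleq T_k$, $k<j$, are imposed, so the relabelled family is not of the required greedy form in any obvious way. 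The paper avoids all of this with one global frequency reflection $P\mapsto\tilde P\coloneqq I_P\times[2^N-(n+2)|I_P|^{-1},\,2^N-n|I_P|^{-1})$, $2^N\coloneqq\sup_{P\in\P}\sup\omega_P$: it maps bitiles to bitiles, swaps up- and down-parts, reverses the order of the centers while preserving the partial order $\leq$ (exchanging $\leq_u$ and $\leq_d$), hence sends a $d$-good collection \emph{verbatim} to a $u$-good one, with no reindexing and no per-top surgery.

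Second, the modulation bookkeeping does not close. With $T=I\times\frac{2}{|I|}[n,n+1)$ and $T^\sharp=I\times\frac{2}{|I|}[n-1,n)$, the ratio of $w^\infty_{T^\sharp_u}$ and $w^\infty_{T_d}$ on $I$ is $w_{2n\oplus(2n-1)}(\cdot/|I|)$, whose index depends on $n$ and is always at least $3$ (for $n=2$ it is $7$), so the claimed per-tree factor $r_0(\cdot/|I_{T_j}|)$ is incorrect. More importantly, the patching step is where the argument genuinely fails: the top intervals $I_{T_j}$ are dyadic, so they are automatically nested or disjoint, and the ``hierarchical'' structure you invoke is trivially true but does not help, because distinct trees in a good collection can have equal or nested top time intervals with different top frequencies; the per-tree correction factors then need not agree on the overlap, and no single unimodular $\phi$ exists for an arbitrary matching of tops. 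This is exactly the point the paper engineers around: after reflecting, it chooses for every tree a top $S_j$ of the \emph{same} (maximal) time length, so that $w^\infty_{\tilde S_u}=\phi_\P\, w^\infty_{S_d}$ with one global unimodular factor $\phi_\P(x)=w_{2^m-1}(x/|I_S|)$, $2^m=2^N|I_S|$, independent of the tree, and then simply replaces $f$ by $f\phi_\P^{-1}$. Without a device of this kind (a common top scale together with a globally defined correction), your outline cannot be completed as stated.
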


\begin{proof}Let us assume that $X$ has $u$-tile type $q$ and let $\mathcal T$ be a $d$-good collection of down-trees. Let us fix a choice of tops $\{T_j\}_j$ ordered so that the sequence of centers $\{c(\omega_{T_j})\}_j$ is \emph{decreasing} and
\begin{align*}
\T_j=\{P\in\P: P\leq_d T_j \ \textrm{and} \ P\nleq T_k\ \textrm {for all} \ k<j\}.
\end{align*}
Let $\P$ be the collection of all tiles in $\mathcal T$ and define $2^N\coloneqq \sup_{P\in \P} \sup \omega_P$. For any bitile $P\in \P$, $P=I_P \times |I_P|^{-1} [n,n+2)$, we define the transformation:
\begin{align*}
 P\mapsto \tilde P \coloneqq I_P\times  [2^N - (n+2)|I_P|^{-1},2^N -n|I_P|^{-1} ) . 
\end{align*}
By the choice of $N$ the transformation above maps bitiles $P\subset \RR \times \RR$ into bitiles $\tilde P\subset \RR \times \RR$. Observe also that the down-part of a bitile $P$ is mapped to the up-part of the bitile $\tilde P$ and vice versa. Transforming the tops $\{ T_j \} _j$ accordingly we obtain a collection 
$\tilde {\mathcal T}$, consisting of up-trees, and a sequence of tops $\{ \tilde T _j \}_j$ which together form a $u$-good collection. By the assumption that $X$ has $u$-tile type $q$ we thus get
\begin{align*}
\bigg(\sum_{\tilde \T\in\tilde{ \mathcal  T}} \NOrm\sum_{\tilde P\in\tilde \T} \ip f,w_{\tilde P_d},w_{\tilde P_d}.L^q(\RR;X). ^q \bigg)^\frac{1}{q}\lesssim \|f\|_{L^q(\RR;X)}.
\end{align*}
Once this estimate is written down, the specific choice of tops $\{T_j\}_j$ is not relevant any more. For each tree $\T_j$ it is clear that we can choose a top $S_j$ with $|I_{S_j}|=\max_\ell |I_{T_\ell}|$. Thus the number $|I_{S_j}|$ does not depend on $j$. Applying Lemma \ref{l.haar} to the tiles $\tilde P$ belonging to the up-tree $\tilde \T $ with top $\tilde S $, we have
\begin{align*}
w_{\tilde P _ d}(x)= \tilde \epsilon _{PS} \cdot h_{I_ S}(x) \cdot w_{ {\tilde S} _u} ^\infty (x)\quad\mbox{and}\quad w_{ P _ u}(x)=   \epsilon _{PS} \cdot h_{I_ S}(x) \cdot w_{   S  _d} ^\infty (x).
\end{align*}
Define $2^m\coloneqq 2^N |I_S|$ and note that this number \emph{does not} depend on the specific choice of tree $\T$. In order to derive a relation between $ w_{ {\tilde S} _u} ^\infty$ and $w_{S  _d} ^\infty$ let us write $S=I_S \times |I_S|^{-1} [n,n+2)$ so that $S_d=I_S\times|I_S|^{-1}[n,n+1)$ and $\tilde S_u =I_S\times|I_S|^{-1} [2^m- n-1 ,2^m- n )$. We have
\begin{align*}
 w_{\tilde S _u} ^\infty (x) &= \ind_{I_S}(x) w_{2^m - n-1 } ({x}/ |I_S|  )=\ind_{I_S}(x) w_{2^m -1 } ({x}/ |I_S|  )w_n ({x}/ |I_S|  )\\
&=w_{2^m -1 } ({x}/ |I_S|  ) \cdot w_{S_d} ^\infty (x)\eqqcolon \phi_\P (x)\cdot w_{S_d} ^\infty (x),
\end{align*}
where $\phi_\P$ is a unimodular function that depends only on the collection $\P$. Thus
\begin{align*}
 \|f\|_{L^q(\RR;X)}\gtrsim \bigg(\sum_{\tilde \T\in\tilde{ \mathcal  T}} \NOrm\sum_{\tilde P\in\tilde \T} \ip f,w_{\tilde P_d},w_{\tilde P_d}.L^q(\RR;X). ^q \bigg)^\frac{1}{q}=\bigg(\sum_{  \T\in { \mathcal  T}} \NOrm\sum_{  P\in  \T} \ip f\phi_\P ,w_{  P_u},w_{  P_u}\phi_\P .L^q(\RR;X). ^q \bigg)^\frac{1}{q}.
\end{align*}
Replacing $f$ by $f \phi_\P ^{-1}$ in the previous estimate we conclude that $X$ has $d$-tile type $q$. The proof of the reverse implication is completely symmetric.
\end{proof}
\begin{remark} In view of Lemma \ref{l.updown} we will henceforth say that a Banach space $X$ has \emph{tile type} $q$ whenever it has $u$-tile type or $d$-tile-type $q$. We will also talk about good collections $\mathcal T$ without specifying whether we are talking about $u$-good or $d$-good collections. Furthermore, it is obvious that if a collection of trees can be split into a finite number $k$ of good collections then the tile-type inequality still holds for the original collection with some different constant depending on $k$. We will then say that $\mathcal T$ is a $k$-good collection, or just a good collection if it is clear that the number $k$ does not depend on anything interesting.
\end{remark}
The following lemma gathers some useful properties of good collections $\mathcal T$ and is the main ingredient in the proof of the partial converse in Proposition \ref{p.converse}.
\begin{lemma}\label{l.good} Let $\mathcal T=\{\T_j\}_{j=1} ^M$ be a good collection of up-trees and denote by $\P$ the set of all bitiles in $\mathcal T$. Let $\{T_j\}_{j=1} ^M$ be the collection of the corresponding tops from the definition a good collection, ordered so that $\{c(\omega_{T_j} )\}_j$ is increasing. We have the following properties:

\noindent{(i)} The down-parts of the bitiles in $\mathcal T$ are disjoint:
	\begin{align*}
		\textrm{if}\ \ P,P'\in\P \ \ \textrm{and}\ \ P\neq P' \ \ \textrm{then} \ \ P' _d \cap P_d =\emptyset.
	\end{align*}

\noindent{(ii)} Let $k(j,x) \coloneqq \max_k  \{1\leq \ k\leq j:  I_{T_k}\ni x  \}$ with the understanding that $\max \emptyset\coloneqq 0$. For $j\in\{1,2,\ldots,M\}$ define the measurable functions $N_j:\RR\to \RR$ as $N_j(x)\coloneqq c(\omega_{T_{k(j,x)}}) $. Let us also set $N_0(x)\equiv 0$ for convenience. For any fixed $x\in \RR$ the sequence $\{N_j(x)\}_j$ is increasing. Furthermore, for each $j$ and $x\in \RR$ we have
\begin{align*}
\{P\in\T_j:\ I_P\ni x\} =\{P\in\P: \ I_P\ni x,\ \ N_j(x)\in [\omega_{P_u}) \ \ \textrm{and} \ \ N_{j-1} (x) \notin \mathring{\omega}_{P} \}.
\end{align*}

\noindent{(iii)} For $1\leq r<\infty$ we have
\begin{align*}
	\sum_j \NOrm \sum_{P\in\T_j} \ip f,w_{P_d},w_{P_d}. L^r(\RR;X). ^r &=\int \sum_j \Abs{  \sum_{P\in\P} \ip f,w_{P_d},w_{P_d} (x)\ind_{\{ N_j (x) \in [\omega_{P_u}), \ N_{j-1}(x)\notin\mathring{\omega}_{P}   \}}}^r dx.
\end{align*}
\end{lemma}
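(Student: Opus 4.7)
The plan is to prove the three assertions in sequence, with (iii) following essentially formally from (ii) by Tonelli's theorem, so the substance lies in (i) and (ii).

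For (i), two down-parts $P_d$, $P'_d$ intersect as rectangles iff they are comparable under $\leq$, so the task reduces to ruling out $P \leq_d P'$ and $P' \leq_d P$. I would split on whether $P, P'$ belong to the same tree. If $P, P' \in \T_j$ are both $\leq_u T_j$, then $\omega_{T_{j,u}}$ lies in both $\omega_{P_u}$ and $\omega_{P'_u}$, forcing the upper halves to be nested; a short dyadic computation then shows that any $\leq_d$-comparability collapses to $P = P'$. Across trees, say $P \in \T_j$, $P' \in \T_{j'}$ with $j < j'$: if $P \leq_d P'$, chasing the inclusions $\omega_{P'_d} \subsetneq \omega_{P_d}$ and $\omega_{T_{j',u}} \subset \omega_{P'_u} \subset \omega_{P_d}$ yields $\inf \omega_{T_{j',u}} < \inf \omega_{T_{j,u}}$, contradicting the increasing ordering of centers; if instead $P' \leq_d P$, the inclusions give $\omega_{T_{j,u}} \subset \omega_{P'_d}$, which upgrades to $\omega_{T_j} \subset \omega_{P'_d}$ by a dyadic-parent argument, hence $P' \leq_d T_j$, contradicting the defining exclusion $P' \nleq T_j$.

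For (ii), the monotonicity of $\{N_j(x)\}_j$ is immediate from the definition of $k(j,x)$ and the strict monotonicity of $c(\omega_{T_j})$. The forward inclusion of the set equality is routine: $P \in \T_j$ with $I_P \ni x$ forces $I_{T_j} \ni x$, hence $k(j,x) = j$ and $N_j(x) = \inf \omega_{T_{j,u}} \in \omega_{T_{j,u}} \subset \omega_{P_u}$; the condition $P \nleq T_k$ for $k<j$ combined with a short case split on the relative sizes of $\omega_P$ and $\omega_{T_{k(j-1,x)}}$ yields $N_{j-1}(x) \notin \mathring{\omega}_P$. The reverse inclusion is the main obstacle. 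The key dyadic-alignment observation is that $\omega_{P_u}$, being the upper half of a dyadic $\omega_P$, cannot be a proper sub-dyadic of $\omega_{T_{j,u}}$ whose left endpoint coincides with $\inf \omega_{T_{j,u}}$. This upgrades $N_j(x) \in \omega_{P_u}$ to $\omega_{T_{k(j,x),u}} \subset \omega_{P_u}$, hence $P \leq_u T_{k(j,x)}$. One then argues $k(j,x) = j$ (else $k(j,x) = k(j-1,x)$ and so $N_{j-1}(x) = N_j(x) \in \omega_{P_u} \subset \mathring{\omega}_P$, violating the hypothesis). Letting $j^*$ be the unique index with $P \in \T_{j^*}$, I would rule out $j^* < j$ by splitting on whether $k(j-1,x)$ equals $j^*$ (then $N_{j-1}(x) \in \omega_{T_{j^*,u}} \subset \omega_{P_u} \subset \mathring{\omega}_P$) or is strictly greater (then the monotonicity $c(\omega_{T_j}) > c(\omega_{T_{k(j-1,x)}})$ forces $\inf \omega_{T_{j,u}} > \sup \omega_P$, clashing with $\omega_{T_{j,u}} \subset \omega_{P_u}$).

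For (iii), since $w_{P_d}(x)$ vanishes for $x \notin I_P$, part (ii) allows one to rewrite
\[
 \sum_{P\in\T_j}\ip f,w_{P_d}, w_{P_d}(x) = \sum_{P\in\P}\ip f,w_{P_d}, w_{P_d}(x)\,\ind_{\{N_j(x)\in[\omega_{P_u}),\ N_{j-1}(x)\notin\mathring{\omega}_P\}}.
\]
Raising to the $r$-th power, summing in $j$, integrating in $x$, and applying Tonelli's theorem produces the identity at once. As indicated, the main technical obstacle throughout is the reverse inclusion in (ii), where dyadic alignment, the monotonicity of $c(\omega_{T_j})$, and the exclusion $P \nleq T_k$ for $k < j$ must be orchestrated precisely.
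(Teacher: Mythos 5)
Correct, and essentially the same argument as the paper's: disjointness of the down-parts via the increasing ordering of the top centers combined with the exclusions $P\nleq T_k$ for earlier $k$, the reverse inclusion in (ii) via the dyadic parity observation upgrading $c(\omega_{T_j})\in\omega_{P_u}$ to $\omega_{T_{j,u}}\subseteq\omega_{P_u}$ together with monotonicity of the centers and the condition $N_{j-1}(x)\notin\mathring{\omega}_P$ (the paper phrases this last step as a convexity argument showing $P\nleq T_\ell$ for $\ell<j$, which is equivalent to your $j^*<j$ case split), and (iii) formally from (ii). The only point to add is the case $j^*>j$, which is excluded in one line since $P\leq_u T_j$ would then contradict the requirement $P\nleq T_k$ for all $k<j^*$ in the definition of $\T_{j^*}$.
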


\begin{proof} For (i) observe that if two bitiles $P,P'\in \P$ belong to the same tree $\T\in\mathcal T$ then we always have $P_d\cap P_d '=\emptyset$ since $\T$ is an up-tree. Suppose now that $P\in \T_j$, $P'\in\T_k $ where $\T_j,\T_k$ are two different trees in $\mathcal T$. Assume for the sake of contradiction that $P_d\cap {P' _d } \neq \emptyset$ so that $I_P\cap I_{P'}\neq \emptyset$ and $\omega_{P_d} \cap \omega_{P {'} _d}\neq \emptyset$. Then we have for example that $\omega_{P_d} \subseteq \omega_{P {'} _d}$. However, since $P,P'$ are different tiles we must actually have that $\omega_P\subseteq \omega_{P' _d}$. Thus 
	\begin{align*}
		\omega_{T_j} \subseteq \omega_P \subseteq \omega_{P' _d}\subset \omega_{P'}  \ \ \textrm{and} \ \ \emptyset \neq{I_{P}\cap I_{P'}}\subseteq I_{T_j}\cap I_{P'}
	\end{align*}
which implies that $P'\leq T_j$. We then get the following inequality for the centers of $\omega_{T_j}$, $\omega_{T_k}$:
	\begin{align*}
		c(\omega_{T_j})<\sup\omega_{T_j}\leq \sup \omega_{P} \leq \sup \omega_{P' _d} =\inf \omega_{P' _u}\leq \inf \omega_{T_{k,u}}=c(\omega_{T_k}).
	\end{align*}
By the definition of a good collection we thus have that  $j<k$ so that $P' \nleq T_j$, a contradiction.

We now prove (ii). First note that for every fixed $x\in\RR$ the sequence $\{N_j(x)\}_j$ is increasing as a composition of increasing functions of $j$. In order to prove the main claim in (ii)  we fix $x\in\RR$ and $j\in\{1\ldots,M\}$ and define the collections of bitiles
\begin{align*}
	S(j,x)\coloneqq \{P\in \T_j: I_P\ni x\}, \quad B(j,x)\coloneqq \{P\in\P: I_P\ni x, N_j(x)\in \omega_{P_u},\ \ N_{j-1}(x)\notin \mathring{\omega}_{P}\},
\end{align*}
where $N_j(x)$ is as in the statement of the lemma. We claim that $S(j,x)=B(j,x)$. If $x\notin I_{T_j}$ then both collections are empty: for $S(j,x)$ this is because $x\notin I_{T_j}\supseteq I_P$ while for $B(j,x)$ because $N_{j-1}(x)=N_j(x)$ in this case.

It remains to verify the claim when $x\in  I_{T_j}$ in which case $N_j(x)=c(\omega_{T_j})$ and $N_{j-1}(x)\in\{0,c(\omega_{T_{k(j-1,x)}})\}$.

Let $P\in S(j,x)$. Then $x\in I_P\subseteq I_{T_j}$ and by the definition of the good collection we have that $P\leq_u T_j$ and $P\nleq  T_k$ for any $k\leq j-1 $. The condition $P\leq_u T_j$ implies that $N_j(x)=c(\omega_{T_j})\in\omega_{P_u}$. If $x\notin\cup_{\ell\leq j-1}I_{T_\ell}$ then $N_{j-1}(x)=0$ which is never in the interior of any frequency interval thus $N_{j-1}(x)\notin \mathring{\omega}_{P}$ in this case. On the other hand if $x\in\cup_{\ell\leq j-1}I_{T_\ell}$ we have that $x\in I_{T_{k(j-1,x)}}$ and $P\nleq T_{k(j-1,x)}$ since $k(j-1,x)<j$. Since $x\in I_P \cap I_{T_{k(j-1,x)}}\neq \emptyset $ we must have $\omega_{T_{k(j-1,x)}}\nsubseteq \omega_P$ and thus $N_{j-1}(x)=c(\omega_{T_{k(j-1,x)}})\notin \mathring{\omega}_P$. This proves the inclusion $S(j,x)\subseteq B(j,x)$.

For the opposite inclusion assume that $P\in B(j,x)$. Since $x\in I_{T_j}$ we have that $N_j(x)=c(\omega_{T_j})\in\omega_{P_u}$ which is equivalent to $\omega_{T_{j,u}}\subseteq \omega_{P_u}$. Since $x\in I_P\cap I_{T_j}$ this shows that $P\leq_u T_j$. Now it is not hard to see that $P \nleq T_\ell $ whenever $\ell <j$. Indeed suppose that we had $P\leq T_\ell$ for some $\ell <j$. This would imply that $x\in I_P\subseteq I_{T_\ell}$ and thus $\ell \leq k(j-1,x)< j $. Furthermore we would have $\omega_{T_\ell},\omega_{T_j} \subseteq \omega_P$ so by the convexity of the interval $\omega_P$ and the fact that the sequence $\{c(\omega_{T_j})\}_j$ is increasing we would conclude that $N_{j-1}(x)=c(\omega_{T_{k(j-1,x)}})\in \mathring{\omega}_P$, contradicting the second condition in the definition of $B(j,x)$. This proves the inclusion $B(j,x)\subseteq S(j,x)$ and thus concludes the proof of (ii).

Finally part (iii) of the lemma is an obvious application of the identity $S(j,x)=B(j,x)$.
\end{proof}
In the following paragraphs of this section we will investigate how the tile-type condition relates to the classical cotype of a Banach space. For this we will need to be able to view the dyadic intervals inside $[0,1)$ as the time intervals of bitiles of a suitable good collection. This is the content of the following lemma.
\begin{lemma}\label{l.converse} Let $\mathcal J$ be a finite collection of dyadic intervals in $[0,1)$ and define $2^{-N}\coloneqq \min_{I\in\mathcal J} |I|$.

\noindent{(i)} Let $\T\coloneqq \{I\times [2^{N+1}-2|I|^{-1},2^{N+1}):\ I\in \mathcal J\}$. Then $\T$ is an up-tree and $\mathcal J=\{I_P: P=I_P\times \omega_P \in \T\}$.

\noindent{(ii)} There exists a good collection of up-trees $\mathcal T=\{\T_j\}_{j=0} ^N$ such that $\T=\cup_j \T_j$ and 
\begin{align*}
\{I\in \mathcal J: |I|=2^{j-N}\} = \{I_P: P=I_P\times \omega_P \in \T_j\},\quad 0\leq j \leq N.
\end{align*}

\noindent{(iii)} For $1\leq r <\infty$ and every $f\in L^r([0,1);X)$ we have the identity
\begin{align*}
	\sum_{j=0} ^N  \Abs{ \sum_{P\in \T_j} \ip w_{\mathcal J} ^\infty f,w_{P_d},w_{P_d}(x)}^r=\sum_{j=0} ^N  \Abs{ \sum_{\substack{I\in\mathcal J \\ |I|=2^{-j}}} \ip  f,h_I,h_I(x)}^r,
\end{align*}
where $w_\mathcal J ^\infty$ is a unimodular function that depends only on the collection $\mathcal J$.
\end{lemma}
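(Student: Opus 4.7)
\textbf{Part (i)} reduces to a direct check. For $I \in \mathcal J$ with $|I| = 2^{-k}$, $0 \leq k \leq N$, the candidate bitile $P := I \times [2^{N+1} - 2^{k+1}, 2^{N+1})$ satisfies $|I_P|\cdot|\omega_P| = 2$ and $\omega_P = \frac{2}{|I|}[2^{N-k}-1, 2^{N-k})$ with integer $2^{N-k}-1 \geq 0$, so it is a valid bitile. Taking $T := [0,1) \times [2^{N+1}-2, 2^{N+1})$, every $P \in \T$ satisfies $I_P \subseteq [0,1) = I_T$ and $\omega_{T,u} = [2^{N+1}-1, 2^{N+1}) \subseteq [2^{N+1}-2^k, 2^{N+1}) = \omega_{P,u}$, so $\T$ is an up-tree with top $T$. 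The identification $\mathcal J = \{I_P : P\in\T\}$ is immediate from the construction.

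For \textbf{part (ii)}, I would set $\T_j := \{P \in \T : |I_P| = 2^{j-N}\}$, which immediately gives $\T = \bigcup_j \T_j$ together with the required equality of time-interval sets; each $\T_j$ is an up-tree, with $T$ from (i) as a valid top. The \textbf{main obstacle} is exhibiting the exact good-collection structure in the sense of the paper: finding tops $T_j$ so that $\{c(\omega_{T_j})\}_j$ is non-decreasing and $\T_j = \{P \in \P : P \leq_u T_j,\ P \nleq T_k\ (k<j)\}$. My plan is to choose tops whose up-parts $\omega_{T_j,u}$ are shifted dyadic subintervals of the common $\T_j$-slab $[2^{N+1}-2^{N-j}, 2^{N+1})$, arranged so that $c(\omega_{T_j})$ strictly increases with $j$; since across distinct $j$'s these frequency slabs are strictly nested, a careful case analysis on the relative positions of $\omega_{T_j}$ and the bitile frequencies of $\T_k$ for $k<j$ should settle the identity. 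In the worst case I would fall back on a bounded splitting (for instance even/odd in $j$, or a further refinement into singleton trees grouped blockwise) to obtain a $k$-good collection with $k = O(1)$, which still delivers the tile-type inequality required for Proposition \ref{p.converse}.

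For \textbf{part (iii)}, the idea is that the single common top $T$ of $\T$ from (i) is simultaneously a top for every sub-tree $\T_j$, so Lemma \ref{l.haar} supplies
\begin{align*}
w_{P_d}(x) = \epsilon_{P,T}\, w_{T_u}^\infty(x)\, h_{I_P}(x),\qquad \epsilon_{P,T}\in\{\pm 1\},\quad P\in\T.
\end{align*}
I would set $w_{\mathcal J}^\infty := w_{T_u}^\infty$; this is $\pm 1$-valued on $[0,1)$ and depends only on $N$, hence only on $\mathcal J$. Using $(w_{T_u}^\infty)^2 = \ind_{[0,1)}$, the support condition $I_P \subseteq [0,1)$, and $\epsilon_{P,T}^2 = 1$, a direct substitution yields
\begin{align*}
\langle w_{\mathcal J}^\infty f, w_{P_d}\rangle\, w_{P_d}(x) = w_{T_u}^\infty(x)\, \langle f, h_{I_P}\rangle\, h_{I_P}(x).
\end{align*}
Summing over $P \in \T_j$ makes $I_P$ range exactly over $\{I \in \mathcal J : |I| = 2^{j-N}\}$ by (ii); taking $|\cdot|^r$ in the Banach norm the unimodular factor $w_{T_u}^\infty(x)$ drops out, and the reindexing $j \mapsto N - j$ matches the right-hand side of (iii), completing the identity.
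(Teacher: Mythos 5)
Parts (i) and (iii) of your argument are correct and essentially identical to the paper's proof: the common top $T=[0,1)\times[2^{N+1}-2,2^{N+1})$, the use of Lemma \ref{l.haar}, the choice $w_{\mathcal J}^\infty\coloneqq w_{T_u}^\infty$, the disappearance of the unimodular factor after taking norms, and the reindexing $j\mapsto N-j$ are exactly the paper's steps.

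The genuine gap is in part (ii), which carries the real content of the lemma and which you leave as a plan rather than a proof. First, your primary plan (choosing shifted tops so that the scale decomposition $\{\T_j\}_{j=0}^N$ forms a single, literally good collection) cannot succeed whenever two consecutive scales are populated. Indeed, if $P\in\T_{j+1}$ then $\omega_P=[2^{N+1}-2^{N-j},2^{N+1})$ is exactly the slab in which $\omega_{T_{j,u}}$ is required to lie for $T_j$ to be a top of the up-tree $\T_j$; in general the time intervals occurring in $\T_j$ are spread over $[0,1)$, so $|I_{T_j}|\geq 1$ and $|\omega_{T_j}|\leq 2\leq |\omega_P|$. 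Since $\omega_{T_j}$, $\omega_{P_d}$ and $\omega_{P_u}$ are all standard dyadic frequency intervals, nestedness forces $\omega_{T_j}\subseteq\omega_P$ and then either $\omega_{T_{j,u}}\subseteq\omega_{P_u}$ or $\omega_{T_{j,d}}\subseteq\omega_{P_d}$ (the borderline case $|\omega_{T_j}|=|\omega_P|$ gives $\omega_{T_{j,u}}=\omega_{P_u}$). Either way $P\leq T_j$ with $j<j+1$, so the defining identity of a good collection would expel $P$ from $\T_{j+1}$. Hence the even/odd splitting is not a worst-case fallback: it is the proof, and "good collection" in the statement is to be read in the sense of the remark after Lemma \ref{l.updown} (a $k$-good collection with harmless $k$), which is also all that Proposition \ref{p.converse} needs. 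Second, even granting the splitting, you never exhibit tops or verify the identity $\T_j=\{P\in\P:\ P\leq_u T_j,\ P\nleq T_k\ \text{for all}\ k<j\}$ within each parity class. The paper does precisely this: it takes $T_j\coloneqq[0,2)\times[2^{N+1}-2^{N-j},2^{N+1}-2^{N-j}+1)$, checks that $\T_j=\{P\in\P:\ P\leq_u T_j,\ P\nleq_u T_k\ \text{for}\ k<j\}$, observes that for $k<j-1$ the interval $\omega_{T_k}$ is disjoint from $\omega_P$ for every $P\in\T_j$ (so that $\nleq_u$ and $\nleq$ coincide there), and then splits into even and odd $j$ to obtain a $2$-good collection. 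Your write-up anticipates the remedy but does not carry out the choice of tops or the verification, so as it stands part (ii) is not proved.
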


\begin{proof} For (i) it is enough to notice that the tile $T\coloneqq [0,1)\times [2^{N+1}-2,2^{N+1})$ satisfies $P\leq_u T$ for all $P\in\mathbb T$. Let us now show (ii). For $I\in \mathcal J$ we set $\omega_I\coloneqq [2^{N+1}-2|I|^{-1}, 2^{N+1})$ and for all $ 0\leq j \leq N$ we define the trees
\begin{align*}
\T_j\coloneqq\{ P=I\times \omega_I: \ I\in\mathcal J,\ |I|=2^{j-N }\}.
\end{align*}
We define an appropriate top $T_j$ for each tree $\T_j$ by setting
	\begin{align*}
	T_j\coloneqq [0,2) \times  [2^{N+1}-2^{N-j},2^{N+1}-2^{N-j}+1).
	\end{align*}
Suppose that $P=I_P\times \omega_P\in \T_j$ for some $j\in\{0,1,\ldots,N\}$. Then
	\begin{align*}
		\omega_{T_j}=[2^{N+1}-|I_P|^{-1},2^{N+1}-|I_P|^{-1}+1)\subset \omega_{P_u}
	\end{align*}
and obviously we always have that $I_P\subset I_{T_k}$. Thus each $T_j$ is a top of $\T_j$ and hence each $\T_j$ is an up-tree. By construction the sequence $\{c(\omega_{T_j})\}_{j\leq N} $ is strictly increasing and furthermore the intervals $\omega_{T_j}$ are disjoint. We first show that the collection $\{\T_j\}_{j\leq N}$ satisfies
	\begin{align*}
	 \T_j=\{P\in\P: P\leq _u  T_j \ \textrm{and}\  P\nleq_u T_k\ \textrm{for all}\  k<j\},
	\end{align*}
where $\P$ is the collection of all the bitiles contained in the trees $\T_j$. Let $P\in\T_j$. We already saw that $P\leq_u T_j$. Furthermore for $k<j$ we have that
$\sup \omega_{T_k}\leq \inf \omega_{T_j}$. Observe however that $\inf {\omega_{T_j}}=2^{N+1}-2^{N-j}=2^{N+1}-|I_P|^{-1}=\omega_{P_u}$ since $P\in \T_j$. Thus $\sup \omega_{T_k}\leq \inf \omega_{P_u}$ which implies that $P\nleq_u T_k$ whenever $k<j$. This proves
\begin{align*}
 \T_j\subseteq \{P\in\P: P\leq _u  T_j \ \textrm{and}\  P\nleq_u T_k\ \textrm{for all}\  k<j\}.
\end{align*}
Now assume that $P\in \{P\in\P: P\leq _u  T_j \ \textrm{and}\  P\nleq_u T_k\ \textrm{for all}\  k<j\}$. Then we have $P\leq_u T_j$ thus $\omega_{T_{j,u}}\subseteq \omega_{P_u}$ which implies that $2^{N+1}-|I_P|^{-1}\leq 2^{N+1}-2^{N-j}\Leftrightarrow |I_P| \leq 2^{j-N}$. We claim that in fact $|I_P|=2^{j-N}$. Indeed, if $|I_P|\leq 2^{(j-1)-N}$ then we would get that $\omega_{T_{j-1,u}}\subseteq \omega_{P_u}$ and this in turn would give that $P\leq_u T_{j-1}$ which is a contradiction. Since $P\in\P$ and $|I_P|=2^{j-N}$ we get that $P\in \T_j$.

Observe that if $P\in \T_j$ for some $j$  then by construction $P\cap T_k=\emptyset$ for all $k<j-1$.  Thus we have $P\nleq_u T_k\Leftrightarrow P\nleq T_k$ for $k<j-1$. 
We now split the collection $\mathcal T$ into two collections by setting say $\mathcal T_1=\{\T_{2j}\}_j$ and $\mathcal T_2=\{T_{2j+1}\}_j$ and each collection $\mathcal T_\nu$, $\nu=1,2$, is good. 
This shows that the original collection $\mathcal T$ is a $2$-good collection.

For (iii), remember that the trees $\T_j$ share a common top $T=[0,1)\times [2^{N+1}-2,2^{N+1})$. Thus, Lemma \ref{l.haar} implies that
\begin{align*}
\Abs{	\sum_{P\in \T_j}\ip f,w_{P_d},w_{P_d}(x)}&=\Abs{\sum_{P\in\T_j} \ip f w_{T_u} ^\infty, h_{I_P},w_{T_u} ^\infty h_{I_P}}= \Abs{\sum_{I\in \mathcal J} \ip f w_{T_u} ^\infty, h_{I_P}, h_{I_P}},
\end{align*}
which proves the claim in (iii) by setting $w_\mathcal J ^\infty\coloneqq w_{T_u} ^\infty$ and replacing $f$ by $f w_\mathcal J ^\infty$.
\end{proof}
 Finally we recall the main result proved in \cite{HL} concerning the tile-type of an interpolation space $X$. Observe that by Lemma \ref{l.good} the down-parts of all bitiles in a good collection $\mathcal T$ are disjoint; thus the following Proposition is identical to \cite{HL}*{Proposition 3.1}.
\begin{proposition}\label{p.tiletype} A necessary condition for tile-type $q$ is that $X$ is a UMD space
and $q\geq 2$. If a UMD space has tile-type $q$, it has tile-type $p$ for all $p \in [q,\infty)$. 
Every Hilbert space has tile-type $2$, and every complex interpolation space $[Y,H]_\theta$, $\theta\in(0,1)$, between a UMD space $Y$ and a Hilbert space $H$ has tile-type $2/\theta$.
\end{proposition}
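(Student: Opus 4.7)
The key observation is that Lemma \ref{l.good}(i) guarantees that the down-parts of the bitiles appearing in any good collection are pairwise disjoint, which is precisely the geometric feature on which the proof of \cite{HL}*{Proposition 3.1} rests. My plan is to follow that argument, checking that each ingredient still applies in the present, slightly weaker setting.

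\textbf{Hilbert case.} Since $\{P_d:P\in \mathcal P\}$ consists of pairwise disjoint tiles of area one, the family $\{w_{P_d}\}$ is orthonormal in $L^2$. Bessel's inequality then yields
\begin{equation*}
 \sum_{\T\in\mathcal T} \NOrm \sum_{P\in\T}\ip f,w_{P_d},w_{P_d} .L^2(\RR;H).^2
 = \sum_{P\in\mathcal P}\abs{\ip f,w_{P_d},}^2 \leq \|f\|_{L^2(\RR;H)}^2,
\end{equation*}
which proves tile-type $2$ for every Hilbert space.

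\textbf{Interpolation and monotonicity.} For a UMD space $Y$, Lemma \ref{l.haar} rewrites each single-tree operator $T_\T f\coloneqq \sum_{P\in\T}\ip f,w_{P_d},w_{P_d}$ as a Haar projection modulated by the unimodular factor $w_{T_u}^\infty$; such Haar multipliers are uniformly $L^p(Y)$-bounded precisely because $Y$ is UMD. This furnishes an $\ell^\infty(\mathcal T; L^p(\RR;Y))$ endpoint for the linear map $f\mapsto (T_\T f)_{\T\in\mathcal T}$. I would then perform bilinear complex interpolation between this endpoint and the $\ell^2(\mathcal T;L^2(\RR;H))$ endpoint from the previous step, carried out on the interpolation scale $[Y,H]_\theta$, yielding tile-type $2/\theta$ for $X=[Y,H]_\theta$. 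The same interpolation scheme, now performed between the tile-type $q$ inequality and the $\ell^\infty(L^p(X))$ endpoint valid on any UMD $X$, produces tile-type $p$ for every $p\in[q,\infty)$.

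\textbf{Necessity.} Given any finite collection $\mathcal J$ of dyadic subintervals of $[0,1)$, Lemma \ref{l.converse} constructs a good collection of up-trees whose time intervals reproduce $\mathcal J$ scale by scale; after substituting $f\mapsto f w_{\mathcal J}^\infty$ and using unimodularity of $w_{\mathcal J}^\infty$, part (iii) of that lemma rewrites the tile-type $q$ inequality as
\begin{equation*}
 \sum_{j=0}^N \NOrm \sum_{\substack{I\in\mathcal J\\ \abs I=2^{-j}}} \ip f,h_I,h_I .L^q([0,1);X).^q
 \lesssim \|f\|_{L^q([0,1);X)}^q.
\end{equation*}
Since $\mathcal J$ is arbitrary, this is Pisier's martingale $q$-cotype inequality for dyadic $X$-valued martingales on $[0,1)$, which immediately forces $q\geq 2$. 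To upgrade this to full UMD, I would apply the tile-type inequality to sign-modulated wave packets (such signings preserve the good-collection structure of Lemma \ref{l.converse}) and invoke the Kahane contraction principle to deduce unconditionality of dyadic Haar expansions in $L^q([0,1);X)$, which is equivalent to UMD.

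The step I expect to be most delicate is this last one, namely bootstrapping from martingale $q$-cotype to the full UMD property: the tile-type inequality directly encodes only a cotype-style square function rather than the sign-changed partial-sum estimate that characterises UMD, so the argument requires applying the hypothesis to a signed reparametrisation of the good collection and extracting UMD indirectly.
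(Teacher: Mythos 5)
The paper does not actually reprove this proposition: its ``proof'' consists of the observation, via Lemma \ref{l.good}(i), that the down-parts of the bitiles in a good collection are pairwise disjoint, so that the statement is inherited verbatim from \cite{HL}*{Proposition 3.1}; the necessity of UMD and of $q\geq 2$ is in any case re-derived in Proposition \ref{p.cotype}. Your Hilbert-space step (pairwise disjoint down-tiles, hence orthonormal wave packets, hence Bessel) is correct, and your necessity step is essentially the argument of Proposition \ref{p.cotype}: Lemma \ref{l.converse} turns the tile-type $q$ inequality into the Haar-martingale cotype $q$ inequality, forcing $q\geq 2$, and the single-tree case gives uniform bounds on arbitrary Haar projections, hence unconditionality of the Haar system in $L^q([0,1);X)$ and so UMD. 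For that last point the Kahane contraction principle is not needed and does not really help: the paper simply splits signs $\epsilon_I\in\{-1,+1\}$ into two $\{0,1\}$-valued projections and applies the tile-type inequality to a single tree.

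The interpolation/monotonicity step, however, has a genuine gap: an exponent mismatch that your sketch does not address. Writing $T_\T f\coloneqq\sum_{P\in\T}\ip f,w_{P_d},w_{P_d}$, the UMD property of $Y$ gives the uniform single-tree bound only for finite exponents, so your endpoints are $f\mapsto (T_\T f)_{\T}$ bounded from $L^{p}(\RR;Y)$ into $\ell^\infty(\mathcal T;L^{p}(\RR;Y))$ and from $L^2(\RR;H)$ into $\ell^2(\mathcal T;L^{2}(\RR;H))$. Complex interpolation of these lands in $\ell^{2/\theta}\bigl(\mathcal T;L^{p_*}(\RR;X)\bigr)$ with $\tfrac1{p_*}=\tfrac{1-\theta}{p}+\tfrac{\theta}{2}>\tfrac{\theta}{2}$, i.e.\ the inner Lebesgue exponent $p_*$ is strictly smaller than the outer exponent $2/\theta$ for every admissible $p$, and they coincide only at the unavailable endpoint $p=\infty$. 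Since $T_\T f$ is supported in $I_T$, H\"older's inequality only bounds $\|T_\T f\|_{L^{p_*}(X)}$ by the $L^{2/\theta}$ norm, not conversely, so the mixed-exponent estimate you obtain is strictly weaker than the tile-type $2/\theta$ inequality demanded by the definition (and needed in the size lemma), where inner and outer exponents are equal. The same mismatch undermines the monotonicity claim: interpolating the tile-type $q$ inequality with an $\ell^\infty(L^{p_0}(X))$ endpoint produces an outer exponent strictly larger than the inner one, which yields neither tile-type at the outer nor at the inner exponent. Closing this gap requires an additional idea beyond the two endpoints you name; this is precisely what \cite{HL}*{Proposition 3.1}, which the paper simply cites, supplies, so as written the central claims---tile-type $2/\theta$ for $[Y,H]_\theta$ and tile-type $p$ for $p\in[q,\infty)$---are not established by your argument.
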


\subsection{Tile-type implies cotype} We observe in this paragraph that the hypothesis that a Banach space $X$ has tile-type $q$ implies that the space $X$ has Rademacher and martingale cotype equal to $q$. We recall the relevant definitions. 

Let $2\leq q \leq \infty$. We say that a Banach space $X$ has (Rademacher) cotype $q$ if
\begin{align*}
\big(\sum_{j\geq 0} |x_j|^q \big)^\frac{1}{q}\lesssim \Norm \sum_{j\geq 0}  r_j   x_j .L^q([0,1);X).
\end{align*}
holds uniformly for all finite sequences $\{x_j\}_j\subset X$, where $\{r_j\}_j$ are the Rademacher functions on $[0,1)$.

On the other hand, we say that $X$ has martingale cotype $q\in[2,\infty]$ (or $M$-cotype $q$) if for all $X$-valued martingales $\{M_n\}_n$ we have
\begin{align*}
\big( \Exp \sum_{n\geq 0} |M_n -M_{n-1}|^q \big)^\frac{1}{q}\lesssim \big(\sup_n \Exp |M_n|^q\big)^\frac{1}{q}.
\end{align*}
Every Banach space trivially has cotype and $M$-cotype $\infty$. 
In general the notion of $M$-cotype is stronger than that of usual cotype but the two notions are equivalent in the case that $X$ has the UMD property. 
Finally we note that martingale cotype is equivalent to Haar cotype, meaning that it is suffices to consider Haar martingales in the definition of $M$-cotype. See \cite{PIS}.

The following proposition shows that tile-type implies $M$-cotype with the same index :
\begin{proposition}\label{p.cotype} Suppose that the Banach space $X$ has tile-type $q\geq 2$. Then $X$ is UMD and has $M$-cotype $q$.
\end{proposition}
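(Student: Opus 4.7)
The UMD conclusion is already the content of Proposition \ref{p.tiletype}; the new content is martingale cotype $q$. The plan is to invoke the equivalence of $M$-cotype with Haar cotype from \cite{PIS} and reduce the proposition to the following Haar cotype estimate, uniformly over all finite collections $\mathcal J$ of dyadic subintervals of $[0,1)$:
\[
\int_{[0,1)}\sum_{k\geq 0}\ABs{\sum_{\substack{I\in\mathcal J\\ |I|=2^{-k}}}\ip f,h_I,h_I(x)}^q\,dx\lesssim \|f\|_{L^q([0,1);X)}^q.
\]
Once this holds with a constant independent of $\mathcal J$, monotone convergence along an exhausting sequence of finite subcollections yields the full Haar cotype inequality, and hence $M$-cotype $q$ via \cite{PIS}.

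Next, I would feed $\mathcal J$ into Lemma \ref{l.converse}. Part (ii) of that lemma produces a good collection of up-trees $\mathcal T=\{\T_j\}_{j=0}^N$ whose bitiles have time intervals exactly the elements of $\mathcal J$, grouped by length into the individual trees. Part (iii) provides a unimodular function $w_{\mathcal J}^\infty$ together with a pointwise identity that converts a sum of squared Haar projections on $\mathcal J$ into a sum of squared wave-packet pairings over $\mathcal T$. This is precisely the bridge between the Haar formulation of cotype and the tile-type language of the hypothesis.

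With $\mathcal T$ in hand, the tile-type $q$ hypothesis applied to $w_{\mathcal J}^\infty f$ (which has the same $L^q$ norm as $f$ by unimodularity) yields
\[
\sum_{j=0}^N\NOrm\sum_{P\in\T_j}\ip w_{\mathcal J}^\infty f,w_{P_d},w_{P_d}.L^q(\R_+;X).^q\lesssim \|f\|_{L^q([0,1);X)}^q.
\]
Writing the $L^q$-norms on the left as integrals, noting that the bitile wave packets are supported in $[0,1)$, and applying the pointwise identity of Lemma \ref{l.converse}(iii) inside the integral rewrites the left-hand side as exactly the restricted Haar cotype quantity displayed above. This proves the estimate for $\mathcal J$ and completes the argument.

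The proof is thus a direct consequence of the tile-type hypothesis once Lemma \ref{l.converse} is available, and no further time-frequency analysis is required. The main conceptual step, insofar as there is one, is the combinatorial packaging of Lemma \ref{l.converse} that realizes a dyadic Haar expansion as the down-wave-packet expansion of a good tile collection; that packaging is already in place earlier in the section.
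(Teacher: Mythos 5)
Your proposal is correct and follows essentially the same route as the paper's proof: reduce $M$-cotype to Haar cotype via \cite{PIS}, use Lemma \ref{l.converse} to realize the scale-by-scale Haar sums as down-wave-packet sums over a good collection of up-trees (with the unimodular twist $w_{\mathcal J}^\infty$), and apply the tile-type $q$ hypothesis; the paper additionally re-proves the UMD statement for completeness, but, as you note, that part is already contained in Proposition \ref{p.tiletype}. (Only a cosmetic slip: the Haar projections enter with $q$-th powers, not squares, as your displayed estimate correctly shows.)
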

\begin{proof} The fact that tile-type $q$ implies the UMD property is already contained in Proposition \ref{p.tiletype} but we include a proof here for the sake of completeness. It will suffice to show that
\begin{align*}
\Norm \sum_{I\in \mathcal J} \epsilon_I \ip f , h_I , h_I . L^r ( [0,1);X) . \lesssim \|f\|_{L^r([0,1);X)}. 
\end{align*}
where $\mathcal J$ is any finite collection of dyadic intervals inside $[0,1)$,  $f\in L^r([0,1);X)$, $\epsilon_I\in\{-1,+1\}$ and $r$ is some fixed exponent in $(1,\infty)$. Because of the following trivial estimate
\begin{align*}
\ABs{ \sum_{I\in \mathcal J} \epsilon_I \ip f , h_I , h_I}&=  \ABs{\sum_{\substack{I\in \mathcal J\\ \epsilon_I=1}}  \ip f , h_I , h_I -\sum_{\substack{ I\in \mathcal J \\ \epsilon_I=-1}}  \ip f , h_I , h_I }
\leq \ABs{\sum_{\substack{I\in \mathcal J\\ \epsilon_I=1}}  \ip f , h_I , h_I} +\ABs{\sum_{\substack{I\in \mathcal J\\ \epsilon_I=-1}}   \ip f , h_I , h_I},
\end{align*}
it will actually suffice to prove that
\begin{align*}
\Norm \sum_{I\in \mathcal J} \epsilon_I \ip f , h_I , h_I . L^r ( [0,1);X) . \lesssim \|f\|_{L^r([0,1);X)}. 
\end{align*}
whenever $\epsilon_I\in \{0,1\}$. However this amounts to showing that
\begin{align*}
\Norm \sum_{I\in \mathcal J'}   \ip f , h_I , h_I . L^r ( [0,1);X) . \lesssim \|f\|_{L^r([0,1);X)},
\end{align*}
for any finite collection $\mathcal J'$ of dyadic intervals in $[0,1)$. Consider the up-tree given by (i) of Lemma \ref{l.converse} applied to the collection $\mathcal J'$:
\begin{align*}
\T\coloneqq\{I \times [2^{N+1}-2|I|^{-1}, 2^{N+1} ):\ I\in\mathcal J'\},
\end{align*}
where $N$ is such that $|I|\geq 2^{-N}$ for all $I\in\mathcal J'$. Setting $gw_{T_u} ^\infty\coloneqq f$ we use Lemma \ref{l.haar} to write
\begin{align*}
 \Norm \sum_{I\in \mathcal J'}   \ip f , h_I , h_I . L^q ( [0,1);X) . ^q&=\int\Abs{\sum_{P\in \T} \ip f ,h_{I_P}, h_{I_P}(x) }^qdx
\\
&=\int \Abs{\sum_{P\in \T} \ip g,w_{P_d}, w_{P_d}(x) }^qdx \lesssim \|f\|_{ L^q ( [0,1);X) } ^q
\end{align*}
where in the last inequality we used the tile-type hypothesis for the collection consisting of the single tree $\T$. This however is the UMD condition for Haar martingales with $r=q$.

We will now show that $X$ has martingale cotype $q$. By Lemma \ref{l.converse} we have for every positive integer $N$ that
	\begin{align*}
		\sum_{j=0} ^N \Abs{ \sum_{|I|=2^{-k}} \ip  f,h_I,h_I(x)}^q=	\sum_{j=0} ^N \Abs{ \sum_{P\in \T_j} \ip w_N ^\infty f,w_{P_d},w_{P_d}(x)}^q,
	\end{align*}
where $\{\T_j\}_{j=0} ^N$ is a good collection of up-trees. Since $X$ has tile-type $q$ the right hand side is controlled by $\|f\|_{L^q([0,1);X)}$. Thus
\begin{align*}
\sum_{0\leq k\leq N }\Norm \sum_{|I|=2^{-k} } \ip f,h_{I_P},h_{I_P}.L^q([0,1);X).  ^q \lesssim \|f\|_{L^q([0,1);X)} ^q,
\end{align*}
with the implicit constant not depending on $N$. This is the cotype condition for Haar martingale differences which by \cite{PIS} is equivalent to $X$ having  martingale cotype $q$.
\end{proof}

\subsection{Vector-valued L\'epingle inequality}
The variational Carleson theorem, in the scalar case, depends on certain jump inequalities originally due to L\'epingle \cite{Lep}. This fact has been recorded and well understood in several papers as for example in  \cite{DOLA}, \cite{OSTTW}, \cite{JSW}. For the Banach space case that we are considering we will need the appropriate vector-valued extension proved by Pisier and Xu in \cite{PX}:
\begin{theorem}[\cite{PX}*{Theorem 4.3}]\label{p.leptile} Suppose that $X$ has cotype $\tau$ for all $\tau > q$. Then we have L\'epingle's inequality for functions $f\in L^p(\RR;X)$:
\begin{align*}
\norm \Exp_n f  . L^p(\RR;\mathcal V^r(X)). \lesssim \|f\|_{L^p(\RR;X)},
\end{align*}
for all $r>q$ and $1<p<\infty$. 
\end{theorem}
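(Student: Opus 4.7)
The plan is to exploit the martingale cotype that follows from the cotype hypothesis, and combine it with Doob's maximal inequality via a jump-counting argument, in order to promote a fixed-sequence variation bound to the full L\'epingle inequality.

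First I would pass from Rademacher cotype to martingale cotype. Since the hypothesis gives cotype $\tau$ for every $\tau > q$, and the theorem will be applied in the UMD framework (tile-type spaces are UMD by Proposition \ref{p.tiletype}), Pisier's classical equivalence gives martingale cotype $\tau$: for any $X$-valued martingale $(M_n)$ with differences $(d_n)$,
\begin{align*}
\Bigl(\Exp \sum_n |d_n|^\tau\Bigr)^{1/\tau} \lesssim \sup_n \|M_n\|_{L^\tau}.
\end{align*}
Specializing to $M_n = \Exp_n f$ on $\RR$, and applying to the coarser martingale $(\Exp_{N_j} f)_j$ along any \emph{fixed} increasing sequence $N_0 < N_1 < \cdots$ (whose differences form a martingale difference sequence with respect to the coarsened dyadic filtration), this immediately controls the fixed-sequence $\tau$-variation in $L^\tau(\RR;X)$.

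The crux, which I would attack next, is promoting this fixed-sequence bound to a uniform bound on the full variation. To this end I would introduce, for each threshold $\lambda > 0$, the number $N_\lambda(f)(x)$ of $\lambda$-jumps of the martingale $(\Exp_n f(x))_n$, and run a stopping-time argument: at almost every $x$, the successive $\lambda$-jump times produce a (random) sequence along which all increments have magnitude at least $\lambda$. Applying martingale cotype to this stopped sequence and integrating in $x$ should yield the jump inequality
\begin{align*}
\norm \lambda N_\lambda(f)^{1/\tau} . L^\tau(\RR). \lesssim \|f\|_{L^\tau(\RR;X)},
\end{align*}
uniformly in $\lambda > 0$. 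From here the $r$-variation for $r > q$ follows by summing dyadic contributions $\lambda = 2^k$; picking any $\tau \in (q,r)$, the strict inequality $r > \tau$ supplies a geometric gain that makes the sum over scales converge. The extension to all $p \in (1,\infty)$ is then obtained by interpolation between the $L^\tau$-endpoint coming from the jump inequality and Doob's $L^\infty$-control of the martingale maximal function, which handles the trivial $\mathcal{V}^\infty$-norm.

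The hardest part will be the jump inequality itself: the stopping times defining the $\lambda$-jump sequence must be engineered so that the resulting differences remain a genuine martingale difference sequence with respect to an appropriate stopped filtration, so that martingale cotype can be applied. This measurability point is exactly where the argument of Pisier and Xu makes essential use of the vector-valued martingale structure. Once this is set up, the proof then follows the blueprint of the classical scalar L\'epingle inequality, with the $\ell^2$ square function replaced throughout by the cotype-$\tau$ square function $(\sum_n |d_n|^\tau)^{1/\tau}$.
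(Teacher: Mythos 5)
You should first note that the paper does not prove this statement at all: it is imported verbatim as \cite{PX}*{Theorem 4.3}, so the only question is whether your sketch would genuinely reconstruct the Pisier--Xu argument. Its opening moves are sound: invoking UMD from the application context to upgrade Rademacher cotype to martingale cotype matches the paper's own remark that the two notions coincide for UMD spaces (and is in fact necessary, since plain Rademacher cotype is not enough -- e.g.\ $L^1$ has cotype $2$ but no finite martingale cotype, and the correct hypothesis in \cite{PX} is martingale cotype); and the stopping-time argument does yield the uniform-in-$\lambda$ jump bound $\lambda\,(\Exp N_\lambda)^{1/\tau}\lesssim \norm f.L^\tau(\RR;X).$, since the stopped increments form a martingale difference sequence and Doob controls $\sup_k\Exp|M_{\sigma_k}|^\tau$.

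The gaps are in the last two steps. First, the passage from the uniform-in-$\lambda$ $L^\tau$ jump inequality to the $r$-variation is not just a ``geometric gain'': pointwise one has $v_r^r\lesssim \sum_k 2^{kr}N_{2^k}$, and integrating term by term gives the same bound $\norm f.L^\tau(\RR;X).^\tau$ at every scale, so the sum over $k\in\Z$ diverges. The standard repair is the pointwise estimate $\sum_k 2^{kr}N_{2^k}\le \bigl(\sup_\lambda \lambda^\tau N_\lambda\bigr)\sum_{2^k\lesssim M^*}2^{k(r-\tau)}\lesssim \bigl(\sup_\lambda \lambda^\tau N_\lambda\bigr)(M^*)^{r-\tau}$, where $M^*$ is the martingale maximal function, followed by H\"older against Doob; but this requires the jump quantity with the supremum over $\lambda$ \emph{inside} the $L^p$ norm, i.e.\ $\norm \sup_\lambda \lambda N_\lambda^{1/\tau}.L^p(\RR).\lesssim\norm f.L^p(\RR;X).$, which your uniform-in-$\lambda$ estimate does not give (summing the dyadic $\lambda$'s to reach the sup again diverges). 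Establishing that stronger estimate is precisely the non-trivial content of the Pisier--Xu proof. Second, the claimed ``interpolation between the $L^\tau$ endpoint and Doob's $L^\infty$ control'' can only produce exponents $p\ge\tau$; since $\tau>q\ge 2$, the entire range $1<p\le 2$ asserted in the theorem is out of reach of that interpolation. To cover all $1<p<\infty$ one needs either $L^p$ (or weak-type) bounds on the jump functional for small $p$, obtained via a good-$\lambda$/distributional argument or a Gundy--Davis type decomposition of the martingale, as in \cite{PX}. As written, your scheme proves the inequality only for $p=\tau$ modulo the first gap, not the stated theorem.
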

Here we remember that $\Exp_n$ is the conditional expectation with respect to dyadic intervals of length $2^n$, as defined in \eqref{e.condk}. By Proposition \ref{p.cotype} one can replace the cotype $\tau>q$ hypothesis in Theorem \ref{p.leptile} by the hypothesis that $X$ has tile-type $\tau$ for all $\tau> q$. We will use this fact in what follows without further comment.

\section{Linearization of the Variational Carleson operator} \label{s.linearization}
In this section we linearize the variational norm of the partial Walsh-Fourier sums of a function $f$, using more or less standard arguments as in \cite{T}, \cite{HL}, \cite{DOLA} and \cite{OSTTW}. We reduce the statement of Theorem \ref{t.main1} to an analogous statement about some closely related linearized versions of the variational Carleson operator which are more amenable to the time-frequency analysis techniques and interpolation. We carry the tile-type hypothesis throughout the section in the statements of our reduced theorems.

For any collection of bitiles $\P$ we define the operator  
\begin{align}\label{e.crp}
C_{r,\P}f(x)\coloneqq \sup_{K,N_0<\cdots<N_K}\bigg(\sum_{j=1} ^K\Abs {\sum_{P\in\P} \ip f,w_{P_d},w_{P_d}(x)\ind_{\{ N_j  \in [\omega_{P_u} ), \ N_{j-1}\notin\mathring{\omega}_P  \}}}^r \bigg)^\frac{1}{r},
\end{align}
where the supremum is taken over all positive integers $K$ and all  \emph{non-negative real numbers} $N_0< N_1< \cdots < N_K$. 
There is a symmetric version, denoted by $ \tilde C_{r,\P}f(x)$,  in which the down-tiles are replaced by up-tiles and the condition in the indicator is replaced by 
$ N_j \notin\mathring{\omega}_P\,,\ N _{j-1} \in (\omega _{P_d} ] $, namely:
\begin{align*}
\tilde C_{r,\P}f(x)\coloneqq \sup_{K,N_0<\cdots<N_K}\bigg(\sum_{j=1} ^K\Abs {\sum_{P\in\P} \ip f,w_{P_u},w_{P_u}(x)\ind_{\{ N_j  \in (\omega_{P_d} ], \ N_{j-1}\notin\mathring{\omega}_P  \}}}^r \bigg)^\frac{1}{r},
\end{align*}
These operators are formed over all bitiles, namely, one only requires $ I_P\subset [0, \infty )$. 

The second statement in Theorem~\ref{t.main_new} will be a consequence of the following theorem for $C_{r,\P}$ and its symmetric analog for $\tilde C_{r,\P}$:
\begin{theorem}\label{t.secondary} Let $X$ be a Banach space with tile-type $\tau$ for all $\tau>q$ and $\P$ be any collection of bitiles. We have
\begin{align}
\norm C_{r,\P} f. L^p(\RR;X). \lesssim_{p,r,q} \norm f.L^p(\RR;X).,
\end{align}
whenever $q<r< \infty$ and $0<\frac{1}{p}<\frac{1}{r'}-\frac{q-2}{r}$.
\end{theorem}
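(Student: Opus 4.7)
The plan is to follow the Lacey-Thiele time-frequency paradigm, as adapted to the variational setting in \cite{OSTTW,DOLA}, with tile-type and the vector-valued L\'epingle inequality replacing scalar orthogonality. After reducing to a finite $\P$ by monotone convergence, I would linearize the supremum in $C_{r,\P}f$ by measurably selecting, for each $x$, the jump count $K(x)$ and stopping frequencies $0\le N_0(x)<\cdots<N_{K(x)}(x)$, so that it suffices to bound
\begin{equation*}
\NOrm\Bigl(\sum_{j\ge 1}\ABs{\sum_{P\in\P}\ip f,w_{P_d},w_{P_d}(x)\,\ind_{\{N_j(x)\in[\omega_{P_u}),\,N_{j-1}(x)\notin\mathring{\omega}_P\}}}^{r}\Bigr)^{1/r}.L^p(\RR;X).
\end{equation*}
by $\|f\|_{L^p(\RR;X)}$. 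A further duality in the $\ell^r$-sum reduces matters to a bilinear pairing estimate amenable to tree-type analysis.

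Next, I would introduce size and density on collections of bitiles and run the standard selection algorithm, extracting at each dyadic size level a \emph{good} collection of up-trees in the sense of Section \ref{s.tiletype}. By Lemma \ref{l.good}(ii), the bitiles attached to a tree $\T_j$ in such a collection match exactly the bitiles appearing in the linearized sum when $N_j(x)$ equals $c(\omega_{T_{k(j,x)}})$, so that the tile-type hypothesis, combined with Lemma \ref{l.good}(iii), supplies the $L^q$-endpoint bound for the good-collection piece. The heart of the argument is then the single-tree estimate: using Lemma \ref{l.haar}, the tree partial sum $\sum_{P\in\T}\ip f,w_{P_d},w_{P_d}(x)$ factors as $w_{T_u}^\infty(x)$ times a Haar partial sum of $fw_{T_u}^\infty$ truncated at scales dictated by $N_j(x)$, so its $r$-variation in $j$ reduces to the $r$-variation of dyadic conditional expectations of $fw_{T_u}^\infty$, which is controlled for every $r>q$ by the vector-valued L\'epingle inequality (Theorem \ref{p.leptile}), applicable since tile-type $\tau>q$ yields cotype $\tau>q$ via Proposition \ref{p.cotype}.

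Finally, summing the tree contributions against the good-collection bound and interpolating between restricted weak-type endpoints will give the full range: the condition $1/p<1/r'-(q-2)/r$, equivalent to $p>r/(r-q+1)$, is the standard Lacey-Thiele range parametrized by the tile-type index, emerging from balancing size and density against the $L^q$-endpoint. I expect the main obstacle to be the tree estimate itself, specifically verifying that arbitrary measurable stopping times $N_j(x)$---not merely those attached to tree tops as in Lemma \ref{l.good}(ii)---restrict within a single tree to a genuinely monotone sequence of dyadic conditional expectations of $fw_{T_u}^\infty$, so that L\'epingle's inequality applies pointwise in a uniform manner and sums cleanly against the tile-type bound over the good collection.
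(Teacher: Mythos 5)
Your overall architecture coincides with the paper's: linearize and dualize to a bilinear form with weights $a_j$, $\sum_j|a_j|^{r'}=1$; decompose $\P$ by size and density, with the tile-type hypothesis entering exactly where you place it (the up-parts of the trees selected by the size algorithm form a good collection, so tile-type controls the counting function $\sum_j|I_{T_j}|$); prove a single-tree estimate using Lemma \ref{l.haar} and the Pisier--Xu/L\'epingle inequality (licensed by Proposition \ref{p.cotype}); and finish by restricted weak-type interpolation, with the range $1/p<1/r'-(q-2)/r$ emerging from the size/density balance as you predict. There is, however, one concrete gap in your tree estimate: Lemma \ref{l.haar} factors $\sum_{P\in\T}\langle f,w_{P_d}\rangle w_{P_d}$ through Haar functions only when $\T$ is an \emph{up-tree}, whereas the trees produced by the density (and size) selections are general trees $\T=\T_u\cup\T_d$. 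For the down-part $\T_d$ the reduction of the $j$-variation to a variation of dyadic conditional expectations fails, and L\'epingle is neither applicable nor needed there; instead one uses that within a down-tree the sets $A(P,j)=I_P\cap\{N_{j-1}\notin\mathring{\omega}_P,\ N_j\in[\omega_{P_u})\}$ are pairwise disjoint in $(P,j)$ (the up-halves $\omega_{P_u}$, $P\in\T_d$, are disjoint and the $N_j$ are increasing), so the sum collapses pointwise to at most one term, bounded by $\size(\T)$ times the density factor. Your plan as written does not cover this half of the tree.

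Two smaller points. First, your appeal to Lemma \ref{l.good}(ii)--(iii) to get an ``$L^q$-endpoint bound for the good-collection piece'' conflates the forward direction with the converse (Proposition \ref{p.converse}); in the forward direction tile-type is used only through the size lemma's estimate $\sum_{j}|I_{T_j}|\lesssim\varsigma^{-q}\|f\ind_J\|_{L^q(\RR;X)}^q$, not to bound a piece of the operator. Second, ``summing the tree contributions'' is not just the $L^1$ counting bound $\sum_j|I_{T_j}|\lesssim 2^{-n}$: after constructing the major subset via an exceptional set $\{M\ind_F>c|F|\}$ or $\{M\ind_E>c|E|/|F|\}$, the case of large $|F|$ requires a BMO bound on $\sum_j\ind_{I_{T_j}}$, interpolated to $L^\tau$ for large $\tau$, and in the vector-valued setting these BMO bounds must be built directly into both selection lemmas, since the scalar route (a second pass through the size lemma together with an efficiency estimate) is not available here. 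With the down-tree argument and these bookkeeping estimates supplied, your outline matches the paper's proof.
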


Concerning the proof of this Theorem, we focus on the operator $ C_{r,\P}f(x)$, using 
in particular the partial order on bitiles and their organization into trees, among other techniques. The main hypothesis is that the space $X$ has finite tile type arbitrarily close to some number $q$. The reader here should prefer to think of the tile type hypothesis in the formulation given for families of up-trees, that is, in the equivalent formulation of the $u$-tile type. For the operator, $ \tilde C_{r,\P}f$, the proof is completely symmetric, in view of Lemma \ref{l.updown}, where the role of the down tile $ P_d$ is analogous 
to that of the up-tile, and vice-versa. For all the considerations concerning the operator $ \tilde C_{r,\P}f$ we switch our point of view to the formulation of the $d$-tile type. Bearing this in mind it is routine to adjust the arguments in this paper, given for the operator $  C_{r,\P}f$, in order to give the corresponding proof for the symmetric operator $ \tilde C_{r,\P}f$. We thus omit any further discussion concerning the proof of Theorem \ref{t.secondary} for the dual operator $ \tilde C_{r,\P}f$.

We briefly describe how to conclude Theorem \ref{t.main1} from Theorem \ref{t.secondary}:
\begin{proof}[Proof of Theorem~\ref{t.main1}.]  
For integers $ 0<\zeta < \zeta '$, let $ \Omega  _{\zeta , \zeta '}$ be the maximal dyadic intervals 
$ \omega  \subset [ \zeta , \zeta ')$.  These intervals partition $[ \zeta ,\zeta ')$, and moreover we have 
\begin{equation*}
S _{\zeta '} f - S _{\zeta } f = \sum_{\substack{\textup{$ P$ is a tile}\\I_P\subset [0,1) ,\  \omega _P \in \Omega  _{\zeta ,\zeta '} }} \langle f, w_P\rangle w_{P} \,. 
\end{equation*}  
This follows from \cite{T}*{Corollary 8.3} and is a variant of the formula  \cite{T}*{p.~68-69}. Now,  let $ \Omega ^{u/d} _{\zeta , \zeta '} $ be those intervals $ \omega \in \Omega _{\zeta , \zeta '}$ for which 
$ \omega $ is the up/down--half of its parent.
Let $ \mathbf P ^{ u/d} _{\zeta , \zeta '}$ be the collection of \emph{bitiles} such that $ \omega _{P _{u/d}} \in \Omega ^{u/d} _{\zeta , \zeta '}$, 
and $ I _{P}\subset [0,1)$.  
We then have 
\begin{equation*}
S _{\zeta '} f - S _{\zeta } f  
= 
\sum_{\sigma \in \{u,d\}} 
\sum_{P\in \mathbf P ^{\sigma } _{\zeta , \zeta '}}  \langle f, w_{P_\sigma}\rangle w_{P_\sigma}  \,.  
\end{equation*}

We have $ P \in \mathbf P ^{d} _{\zeta ,\zeta '} $ if and only if $ I_P \subset [0,1)$, 
$ \zeta \notin\mathring\omega _{P}$, and $ \zeta '\in [\omega _{P_u})$, conditions 
in agreement with the conditions on $ N _{j-1}, N_j$ in the definition of $ C_{r,\P}f$.  
In the symmetric case, we have 
$ P \in \mathbf P ^{u} _{\zeta ,\zeta '} $ if and only if $ I_P \subset [0,1)$, 
$ \zeta  \in (\omega _{P_d} ] $, and $ \zeta ' \notin\mathring \omega _{P}$.  
All together, for any $ K$, $ N_0 < \cdots < N_K$, we have 
\begin{align*}
\sum_{j=1} ^{K} \lvert  S _{N _{j-1} } f - S _{N _{j}}\rvert ^{r} 
& = 
\sum_{j=1} ^{K} 
\Bigl\lvert 
\sum_{\sigma \in \{u,d\}} 
\sum_{P\in \mathbf P ^{\sigma } _{N _{j-1} , N_j}}  \langle f, w_{P_\sigma}\rangle w_{P_\sigma} 
\Bigr\rvert ^{r} 
\\
& \lesssim  \sum_{\sigma \in \{u,d\}} 
\sum_{j=1} ^{K} 
\Bigl\lvert 
\sum_{P\in \mathbf P ^{\sigma } _{N _{j-1} , N_j}}  \langle f, w_{P_\sigma}\rangle w_{P_\sigma} 
\Bigr\rvert ^{r} 
\\
& \lesssim (C_{r,\P_1}f) ^{r} + (\tilde C_{r,\P_2}f) ^{r} \,,
\end{align*}
for some fixed collections of bitiles $\P_1$ and $\P_2$. Using Theorem \ref{t.secondary}, which is valid for arbitrary collections $\P$, and the pointwise inequality just proved, completes the proof. 
\end{proof}

The first statement in Theorem~\ref{t.main_new} is the content of:
  \begin{proposition}\label{p.converse} Let $X$ be some Banach space and suppose that for any collection of bitiles $\P$, the operator $C_{r,\P} $, or the operator $\tilde C_{r,\P}$, satisfies the conclusion of Theorem \ref{t.secondary} with $p=r$:
\begin{align}
 \norm C_{r,\P} f. L^r(\RR;X). \lesssim \norm f.L^r(\RR;X). ,
\end{align}
whenever $q<r< \infty$. Then $X$ has tile-type $\tau$ for all $\tau>q$ and, a fortiori, $X$ has cotype $\tau$ for all $\tau>q$.
\end{proposition}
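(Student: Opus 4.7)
The plan is to derive the tile-type $\tau$ inequality, for each $\tau>q$, as a direct consequence of the hypothesized $L^\tau$ boundedness of $C_{\tau,\P}$, using the structural identities recorded in Lemma \ref{l.good}; the cotype conclusion is then automatic from Proposition \ref{p.cotype}.

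Fix $\tau>q$ and any $u$-good collection $\mathcal T=\{\T_j\}_{j=1}^M$ of up-trees with tops $\{T_j\}$ ordered so that $\{c(\omega_{T_j})\}_j$ is increasing; set $\P=\cup_j\T_j$, and let $N_j(x)$ (with $N_0(x)\equiv 0$) be the measurable functions from Lemma \ref{l.good}(ii). Part (iii) of that lemma then gives
\begin{align*}
\sum_j \NOrm \sum_{P\in\T_j}\ip f,w_{P_d},w_{P_d}.L^\tau(\RR;X).^\tau = \int \sum_j \Abs{\sum_{P\in\P}\ip f,w_{P_d},w_{P_d}(x)\ind_{\{N_j(x)\in[\omega_{P_u}),\,N_{j-1}(x)\notin\mathring{\omega}_P\}}}^\tau dx.
\end{align*}
The key observation is that the integrand is dominated pointwise by $(C_{\tau,\P}f(x))^\tau$: for each fixed $x$, the supremum defining $C_{\tau,\P}f(x)$ ranges over all finite strictly increasing sequences of nonnegative reals, so any measurable $x$-dependent selection is admissible. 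The sequence $\{N_j(x)\}_j$ is non-decreasing by Lemma \ref{l.good}(ii), and whenever $N_j(x)=N_{j-1}(x)$ the corresponding $j$-th summand vanishes: since $[\omega_{P_u})\subset \mathring{\omega}_P$ for every bitile $P$, the condition $N_j(x)\in[\omega_{P_u})$ forces $N_{j-1}(x)\in\mathring{\omega}_P$, contradicting the second requirement. After discarding these null terms the remaining values form a strictly increasing sequence, which is a legitimate competitor in the supremum.

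Integrating the pointwise bound and invoking the hypothesis $\norm C_{\tau,\P}f.L^\tau(\RR;X).\lesssim \norm f.L^\tau(\RR;X).$ yields the $u$-tile type $\tau$ estimate on arbitrary $u$-good collections. The symmetric assumption on $\tilde C_{r,\P}$ produces $d$-tile type $\tau$ via the analogous argument on $d$-good collections, and in either case Lemma \ref{l.updown} upgrades this to tile-type $\tau$ for all $\tau>q$; Proposition \ref{p.cotype} then provides cotype $\tau$ for the same range. The only subtle point, rather than a real obstacle, is to recognize that the pointwise supremum in the definition of $C_{\tau,\P}f(x)$ absorbs a measurable, $x$-dependent choice of the jump sequence $N_j(x)$; once this is granted, the proof is essentially a bookkeeping exercise built on the identities of Lemma \ref{l.good}.
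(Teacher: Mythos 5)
Your proposal is correct and follows essentially the same route as the paper: both reduce the tile-type inequality for a $u$-good collection to the hypothesized bound on $C_{\tau,\P}$ via Lemma \ref{l.good}(ii)--(iii), treating the functions $N_j(x)$ as an admissible pointwise competitor in the supremum, and then invoke Lemma \ref{l.updown} and Proposition \ref{p.cotype}. Your explicit check that terms with $N_j(x)=N_{j-1}(x)$ vanish (since $[\omega_{P_u})\subset\mathring{\omega}_P$) is a welcome clarification of a point the paper passes over silently, but it does not change the argument.
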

\begin{proof} We will prove the proposition assuming that the operator $C_{r,\P}$ is bounded on $L^r(\RR;X)$ for all $r>q$. Let $\mathcal {T}=\{\T_j\}$ be a u-good collection of up-trees. By Lemma \ref{l.good}, (ii) and (iii), there is an increasing sequence of integer valued functions $\{N_j(x)\}_j$, such that
\begin{align*}
\sum_j \NOrm \sum_{P\in\T_j } \ip f,w_{P_d},w_{P_d}  .  L^r(\RR;X) . ^r &= \sum_j \NOrm \sum_{P\in\P } \ip f,w_{P_d},w_{P_d}  \ind_{\{ N_j  \in [\omega_{P_u}), \ N_{j-1}\notin\mathring{\omega}_{P}  \}  }  .  L^r(\RR;X) . ^r 
\\
&\leq \norm C_{r,\P}f .L^r(\RR;X). ^r\lesssim_r \|f\|_{L^r(\RR;X)} ^r,
\end{align*}
since $C_{r,\P}$ is bounded on $L^r(\RR;X)$. If the hypothesis is true for $\tilde C_{r,\P}$ we consider $d$-good collections of trees and show the corresponding statement for the $d$-tile type. The conclusion then follows	by using the analogue of Lemma \ref{l.good} for $d$-good collections.
\end{proof}

Following \cite{DOLA} we consider the linearized version of $C_{r,\P}$ given by
\begin{align*}
C_\P f(x)= C_{r,a,\P}f(x)& \coloneqq \sum_{j=1} ^{K(x)} \sum_{P\in\P} \ip f ,w_{P_d},w_{P_d}(x) \ind_{\{ N_j (x) \in [\omega_{P_u}), \ N_{j-1}(x)\notin\mathring \omega_P  \}}a_j(x),
\end{align*}
where $K,N_1,\ldots N_K:\RR\to \R_+$ are arbitrary measurable functions and $a=\{a_j\}_j$ is a sequence of $X^*$-valued functions with $\sum_{j=1} ^{K(x)} |a_j(x)|^{r'}=1$. The expression for the operator $C_\P$ can be simplified by writing
\begin{align*}
C_\P f(x)&=\sum_{P\in\P} \ip f ,w_{P_d},w_{P_d}(x) \sum_{j=1} ^{K(x)} \ind_{\{ N_j (x) \in [\omega_{P_u}), \ N_{j-1}(x)\notin\mathring \omega_P  \}}a_j(x)
=\sum_{P\in\P} \ip f ,w_{P_d},w_{P_d}(x) \  a_{P}(x),
\end{align*}
where 
\begin{align*}
a_P(x)\coloneqq \sum_{j=1} ^{K(x)} \ind_{\{ N_j (x) \in [\omega_{P_u}), \ N_{j-1}(x)\notin\mathring \omega_P \}}a_j(x).
\end{align*}
The operator $C_\P $ depends on both $r$ and the choice of the sequence $a$ but we suppress this fact in what follows in order to simplify our notation. 

Note here that our assumption that $X$ has tile-type $\tau\in(q,\infty)$ can be replaced by the assumption that $X$ has tile-type exactly $q$. This is because all our conclusions are given in terms of \emph{open} intervals with respect to $p,r$ and $q$. Via a standard restricted weak-type interpolation argument, as for example in \cite{T}*{Chapter 3}, the proof of Theorem \ref{t.secondary} reduces to the proof of the following statement:
\begin{proposition}\label{p.main} Suppose that $X$ is a Banach space with tile-type $q\geq 2$. Let $F,E\subset \RR$ be measurable sets with $|F|,|E|<+\infty$. Then there are major subsets $ E'\subseteq E$ and $ F'\subseteq F $ with  either $ E'=E$ or $ F'=F$, 
such that, for all $f:X\to \RR$ with $|f|\leq \ind_{F'}$, and all $g:X^*\to\RR$ with $|g|\leq \ind_{ E'}$, we have
\begin{align*}
|\ip C_\P f,g, |\lesssim |F|^\frac{1}{p} |E|^\frac{1}{p'},
\end{align*}
whenever $	\max(q,p'(q-1))<r<\infty$.
\end{proposition}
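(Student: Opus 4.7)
The plan follows the by-now standard time-frequency template for proving restricted weak-type bounds for Carleson-like operators, as in \cite{T,OSTTW,DOLA,HL}, adapted to the variational vector-valued setting via the tile-type hypothesis and the vector-valued L\'epingle inequality.

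First, I select the major subsets. By symmetry assume $\lvert E\rvert \le \lvert F\rvert$; the opposite case is handled dually, swapping the roles of $f$ and $g$. Set $\Omega\coloneqq \{x : M\ind_{E}(x) > C\lvert E\rvert/\lvert F\rvert\}$ for a sufficiently large constant $C$, so that $\lvert \Omega\rvert\le \lvert F\rvert/2$ by the weak $(1,1)$ bound of the dyadic maximal function, and set $F'\coloneqq F\setminus \Omega$, $E'\coloneqq E$. Only bitiles with $I_P \not\subset \Omega$ can contribute to $\langle C_\P f, g\rangle$ in a relevant way, which a priori bounds the density of any such bitile by a multiple of $\lvert E\rvert/\lvert F\rvert$.

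Second, I introduce density and size of a subcollection $\mathcal Q\subseteq \P$. Density measures $\sup_{P\in \mathcal Q}\lvert I\rvert^{-1}\int_{I}\lvert g\rvert$ over dyadic $I\supseteq I_P$, while the size is
\begin{align*}
\size(\mathcal Q)\coloneqq \sup_{\mathcal T\subseteq \mathcal Q}\lvert I_\mathcal T\rvert^{-1/r}\NOrm \sum_{P\in \mathcal T}\langle f, w_{P_d}\rangle\, a_P\, w_{P_d}.L^{r}(\RR;X).,
\end{align*}
where the supremum is over up-trees contained in $\mathcal Q$. Standard size/density selection lemmas decompose $\P$ into subcollections of controlled size $\sigma$ and density $\delta$, with the sum of top-interval lengths respectively controlled by $\sigma^{-r}\lvert F\rvert$ and $\delta^{-1}\lvert E\rvert$. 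The crux is that the selection algorithm in the size lemma produces a \emph{good} collection of trees in the sense of Section~\ref{s.tiletype}, so the tile-type $q$ hypothesis applies; combining this with Lemma~\ref{l.haar}, which represents the partial sum over an up-tree as a Haar martingale twisted by the unimodular $L^\infty$-packet $w_{T_u}^\infty$, and with the vector-valued L\'epingle inequality (Theorem~\ref{p.leptile}), produces the $r$-variational control of the partial Walsh sums along a tree for any $r>q$.

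Third, the core tree estimate asserts that for a single tree $\mathcal T$ of size $\sigma$ and density $\delta$,
\begin{align*}
\Bigl\lvert \sum_{P\in \mathcal T}\langle f, w_{P_d}\rangle\,\langle a_P w_{P_d}, g\rangle\Bigr\rvert \lesssim \sigma\,\delta\,\lvert I_\mathcal T\rvert.
\end{align*}
Inserting the tree estimate into the size/density decomposition and summing geometrically over dyadic $\sigma\lesssim 1$ (from the global tile-type bound applied to the full collection together with the duality normalization $\sum_j\lvert a_j\rvert^{r'}=1$) and $\delta\lesssim \lvert E\rvert /\lvert F\rvert$ (from the exceptional set), one is left with a double sum essentially of the form
\begin{align*}
\sum_{\sigma,\delta\ \textup{dyadic}}\sigma\,\delta\,\min\bigl\{\sigma^{-r}\lvert F\rvert,\ \delta^{-1}\lvert E\rvert\bigr\},
\end{align*}
which, upon balancing $\sigma$ against $\delta$ at the crossover, collapses to $\lvert F\rvert^{1/p}\lvert E\rvert^{1/p'}$ precisely in the stated range $r>\max(q,\,p'(q-1))$. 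The hard part will be the tree estimate: one must simultaneously exploit the tile-type $q$ inequality (an $L^q$-summability over tree tops) and the L\'epingle $r$-variational bound to extract a single estimate at the tree level that is both $r$-variational and compatible with the $g$-pairing controlled by density. This delicate interplay is what forces $r>q$, while the restricted weak-type balancing of size against density imposes the additional numerological constraint $r>p'(q-1)$.
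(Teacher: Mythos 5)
Your outline reproduces the broad skeleton (exceptional set, size/density decomposition, a tree estimate via Lemma \ref{l.haar} plus L\'epingle plus tile-type, then summation), but the two pivotal definitions are miscalibrated, and this is not cosmetic. The density must be the frequency-localized variational density $\sup_{P'\geq P}\bigl(|I_{P'}|^{-1}\int_{I_{P'}}|g|^{r'}\sum_{j:\,N_j(x)\in\omega_{P'}}|a_j(x)|^{r'}\,dx\bigr)^{1/r'}$: the constraint $N_j(x)\in\omega_{P'}$ together with the normalization $\sum_j|a_j|^{r'}=1$ is exactly what makes the sets $I_{T_k}\cap E\cap\{N_j\in\omega_{T_k}\}$ pairwise disjoint in the density selection and yields $\sum_k|I_{T_k}|\leq\Delta^{-r'}|E\cap J|$. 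With your density $\sup_{I\supseteq I_P}|I|^{-1}\int_I|g|$, which carries no frequency information, arbitrarily many bitiles maximal for the order $\leq$ can share the same time interval, and the counting bound $\delta^{-1}|E|$ you assert has no proof; moreover the tree estimate then only calibrates against $\delta$ rather than $\delta^{r'}$, which changes the numerology. Dually, the size must be taken with exponent $q$ and with the bare wave-packet sums $\sum_{P\in\T}\langle f,w_{P_d}\rangle w_{P_d}$ (no $a_P$ inside): the counting $\sum_j|I_{T_j}|\lesssim\varsigma^{-q}\|f\ind_J\|_{L^q(\RR;X)}^q$ is precisely the tile-type $q$ inequality applied to the good collection produced by the selection algorithm, and tile-type says nothing about $a_P$-weighted sums, nor does it give an exponent-$r$ bound; your claimed count $\sigma^{-r}|F|$ has no source. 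These exponents ($q$ on the size side, $r'$ on the density side) are what make the easy regime sum to $|F|^{1-(q-1)/r}$, and $1-(q-1)/r>1/p$ is exactly the constraint $r>p'(q-1)$; with your exponents the ``crossover'' computation does not reproduce the stated range.

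Two further gaps. First, disposing of the second regime ``by duality, swapping the roles of $f$ and $g$'' is not legitimate: $C_\P$ is not symmetric in $f$ and $g$ --- the linearizing data $N_j,a_j$ and the size/tile-type hypothesis are attached to the $f$-side while the density is attached to the $g$-side, and $f,g$ take values in $X$ and $X^*$ respectively --- so both cases ($|F|$ small and $|F|$ large relative to $|E|$) have to be run through the same machinery, using the different trivial bounds of Lemma \ref{l.trivialsize}. Second, and more seriously, in the case where $F$ is the large set the naive summation $\sum_{\sigma,\delta}\sigma\delta\min\{\cdot,\cdot\}$ is insufficient for the claimed restricted weak-type bound: one needs the tree lemma in its $L^{r'}$ form (not only the scalar pairing bound), BMO estimates for the counting functions $\sum_j\ind_{I_{T_{n,j}}}$ built directly into \emph{both} selection lemmas (the scalar device of passing the trees through the size lemma a second time, with an efficiency estimate, is exactly what is unavailable in the vector-valued setting), interpolation of the $L^1$ and BMO bounds to an $L^\tau$ bound with $\tau\geq\max(p,r)$, and a H\"older argument pairing this against the $\ell^{r'}$-aggregate of the tree estimates. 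Without this refinement your argument can only deliver the conclusion in the small-$|F|$ case, not the full statement of Proposition \ref{p.main}.
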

Here we say that $ E'\subset E$ is a \emph{major subset of $ E$ } if $ | E'|\geq \frac{1}{2}|E|$. 

\begin{remark} Observe that
\begin{align*}
|\ip C_\P f,g, |\leq \sum_{P\in\P} \abs{ \ip f ,w_{P_d},\ip w_{P_d} a_{P},g, }= \sum_{P\in\P} \epsilon_P \ip f ,w_{P_d},\ip w_{P_d} a_{P},g, = |\ip  C_\P ^+ f,g,|
\end{align*}
for some choice of signs $\epsilon_P\in\{-1,+1\}$, where
\begin{align*}
 C_\P ^+ f (x)\coloneqq \sum_{P\in\P} \epsilon_P \ip f ,w_{P_d},w_{P_d}(x) \  a_{P}(x).
\end{align*}
We will thus prove the estimate in Proposition \ref{p.main} for the larger operator $ C_\P ^+$, which we immediately rename again to $C_\P $, and any \emph{finite} collection of bitiles $\P$.
\end{remark}

\section{The tree lemma} \label{s.tree} 
Let $\P$ be a finite collection of bitiles. The \emph{density} of the collection $\P$ is
\begin{align*}
\density(\P)\coloneqq \sup_{P\in\P} \sup_{P'\geq P} \bigg(\frac{1}{|I_{P'}|}\int _{I_{P'}} |g(x)|^{r'} \sum_{j:\ N_j(x)\in \omega_{P'}}|a_j(x)|^{r'}dx  \bigg)^\frac{1}{r'},
\end{align*}
where we remember that $N_j:\R_+\to \R_+$ are measurable functions, $q\geq 2$ is the tile-type of the Banach space $X$ and $r> q$.
We define the \emph{size} of a collection $\P$ to be
\begin{align*}
\size(\P)\coloneqq \sup_{\T\subseteq \P\ \mbox{\tiny up-tree}} \bigg(\frac{1}{|I_T|} \int \Abs{\sum_{P\in\T}\ip f,w_{P_d},w_{P_d} (x)}^q dx\bigg)^\frac{1}{q}.	
\end{align*}	
\begin{lemma}[Tree lemma]\label{l.tree} For every tree $\T$ we have
\begin{align*}
	\norm g C_\T f.L^s(\RR). = \Norm \sum_{P\in\T} \ip f,w_{P_d},  w_{P_d}a_P g .L^s(\RR). \lesssim \size(\T) \density(\T) |I_T|^\frac{1}{s},
\end{align*}
for all $1\leq s \leq r'$.
\end{lemma}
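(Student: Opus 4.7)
By symmetry, via Lemma~\ref{l.updown} and the down-tree analogue of Lemma~\ref{l.haar}, I would treat only the up-tree case $\T=\T_u$ with top $T$; the down-tree portion reduces to this case via reflection. Since $gC_\T f$ is supported on $I_T$, H\"older's inequality in $I_T$ reduces the claim to the endpoint $s=r'$: indeed $\|h\|_{L^s(I_T)} \leq |I_T|^{1/s - 1/r'} \|h\|_{L^{r'}(I_T)}$ for $s\leq r'$. Writing $C_\T f(x) = \sum_{j=1}^{K(x)} A_j(x)\,a_j(x)$ with
\[
A_j(x) := \sum_{P\in\T}\ip f,w_{P_d},w_{P_d}(x)\,\ind_{\{N_j(x)\in[\omega_{P_u}),\ N_{j-1}(x)\notin\mathring\omega_P\}},
\]
pointwise H\"older in $j$ gives $|C_\T f(x)|\leq V(x)\tilde\alpha(x)$, where $V(x):=(\sum_j|A_j|^r)^{1/r}$ and $\tilde\alpha(x):=(\sum_{j:A_j(x)\neq 0}|a_j|^{r'})^{1/r'}\leq 1$.

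Setting $\tilde f := f\cdot w_{T_u}^\infty$, Lemma~\ref{l.haar} rewrites each $A_j$ as a sum of Haar coefficients of $\tilde f$ restricted to bitiles in $\T$; for fixed $x$ the indicator selects a contiguous range of Haar scales, so
\[
A_j(x) = w_{T_u}^\infty(x)\bigl(\Exp_{k_{j-1}(x)}\pi_{\T}\tilde f(x) - \Exp_{k_j(x)}\pi_{\T}\tilde f(x)\bigr),
\]
where $\pi_{\T}$ is the Haar projection onto $\operatorname{span}\{h_{I_P}:P\in\T\}$ and $\{k_j(x)\}_j$ is monotone in $j$. Hence $V(x) \leq \|\Exp_k\pi_{\T}\tilde f(x)\|_{\vr(X)}$, and since tile-type $\tau$ for all $\tau>q$ yields cotype $\tau$ for all $\tau>q$ (Proposition~\ref{p.cotype}), vector-valued L\'epingle (Theorem~\ref{p.leptile}) combined with the size definition applied to $\T$ gives
\[
\|V\|_{L^q(I_T;X)} \lesssim \|\pi_{\T}\tilde f\|_{L^q(I_T;X)} \leq \size(\T)\,|I_T|^{1/q}.
\]
On the density side, $\tilde\alpha(x)$ is supported where $N_j(x)$ lies in the maximal $\omega_{P_u}$ realised over $x$ in $\T$---a dyadic ancestor $\omega^*(x)$ of $\omega_{T_u}$; stratifying $I_T$ by the scale of $\omega^*(x)$, covering each stratum by admissible $I_{P'}$ with $P'\geq P$ for some $P\in\T$ at the matching scale, and summing the density bound scale by scale produces a geometric series in the scale parameter that collapses to
\[
\int_{I_T}|g|^{r'}\tilde\alpha^{r'}\,dx \;\lesssim\; \density(\T)^{r'}|I_T|.
\]

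Combining via $|gC_\T f|^{r'}\leq V^{r'}(|g|\tilde\alpha)^{r'}$ and H\"older in $x$ with exponents $(q/r',(q/r')')$ delivers all but a residual power $\density(\T)^{r'^2/q}$ of the claim. Recovering this last factor is the main obstacle; here one exploits that $\size(\T)$ controls the $L^q$-average of the wave-packet sum on \emph{every} sub-up-tree of $\T$, which via a John--Nirenberg-type self-improvement yields an exponential tail estimate for $V/\size(\T)$. A layer-cake argument then pairs that tail bound with the $L^1$-mass bound on $(|g|\tilde\alpha)^{r'}$ to supply the missing power of $\density(\T)$, yielding
\[
\|gC_\T f\|_{L^{r'}(\RR;X)} \lesssim \size(\T)\,\density(\T)\,|I_T|^{1/r'},
\]
from which the opening reduction extends the claim to all $1\leq s\leq r'$.
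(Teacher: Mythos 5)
There are two genuine gaps. The main one is the quantitative combination you yourself flag: after the global pointwise H\"older step $|C_\T f|\le V\tilde\alpha$, pairing $\|V\|_{L^q(I_T;X)}\lesssim\size(\T)|I_T|^{1/q}$ with $\int_{I_T}|g|^{r'}\tilde\alpha^{r'}\lesssim\density(\T)^{r'}|I_T|$ by H\"older in $x$ loses a power of the density, and your proposed repair (a John--Nirenberg self-improvement giving exponential tails for $V/\size(\T)$, then a layer-cake pairing with the $L^1$ mass of $(|g|\tilde\alpha)^{r'}$) is asserted rather than proved, is not a quotable result in this vector-valued variational setting, and even if granted would still produce a factor $\log(e+\density(\T)^{-1})^{r'}$ from the layer-cake split, so it does not recover the clean bound $\size(\T)\density(\T)|I_T|^{1/r'}$ claimed in the lemma. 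The paper avoids any loss by never applying H\"older globally: it partitions $I_T$ into the maximal dyadic intervals $J\in\mathcal J$ containing no $I_P$, shows (its Lemma \ref{l.aux}) that for each $J$ there is a single interval $\omega(J)$, attached to a bitile $P(J)\ge Q(J)\in\T$, which captures all conditions $N_j(x)\in\omega_{P_u}$ on $J$, and observes that the relevant variation $\|\Exp_k(w_{T_u}^\infty\tilde f)(x)\|_{\mathcal V^r_{2^k\ge|J|}}$ is \emph{constant on $J$}; hence the density bound is applied locally on each $J$ at full strength, the constant is dominated by the infimum over $J$ of $M(\|\Exp_k(w_{T_u}^\infty\tilde f)\|_{\vr})$, and summing over $J$ one uses $L^{r'}$-boundedness of $M$ and L\'epingle \emph{in $L^{r'}$} (legitimate since $r'<q'\le 2\le q<r$), with the size entering only at the end via UMD and H\"older ($r'\le q$) on $I_T$. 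Your application of L\'epingle in $L^q$ and the subsequent global H\"older is exactly the step that forces the residual power you cannot recover.

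The second gap is the down-part of the tree. The lemma is needed for general trees produced by the size and density selections, so $\T_d$ must be treated, and your claim that it ``reduces to the up-tree case via reflection'' is not correct: the frequency reflection of Lemma \ref{l.updown} exchanges down-parts and up-parts of bitiles and reverses the monotonicity of the linearizing sequence $N_j$, so it converts the $C_{r,\P}$ configuration into the symmetric $\tilde C_{r,\P}$ configuration rather than back into the up-tree case of $C$; moreover Lemma \ref{l.haar} gives a Haar representation of $w_{P_u}$, not $w_{P_d}$, for down-trees, so your identity $A_j=w_{T_u}^\infty(\Exp_{k_{j-1}}-\Exp_{k_j})\pi_\T\tilde f$ simply does not apply to $P\in\T_d$. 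What the down-part actually requires is the (simpler) observation that for $P,P'\in\T_d$ the sets $A(P,j)=I_P\cap\{N_{j-1}\notin\mathring\omega_P,\ N_j\in[\omega_{P_u})\}$ are pairwise disjoint in $(P,j)$ (the up-parts $\omega_{P_u}$ are disjoint and the nesting of the $\omega_P$ forces $j=j'$), after which the single-tile size bound $|\ip f,w_{P_d},|/|I_P|^{1/2}\le\size(\T)$ and the local density estimate on each $J$ give the bound; this argument is absent from your proposal.
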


We will prove the lemma for the case $s=r'$ which, by H\"older's inequality,  implies the conclusion for $1\leq s\leq r'$ as well. Let $\mathcal J$ be the collection of maximal dyadic intervals contained in $I_T$ that do not contain any $I_P$, $P\in\T$. The intervals in the collection $\mathcal J$ form a partition of $I_T$ thus 
\begin{align*}
\norm g C_\T f.L^{r'}	(\RR). & = \big( \sum_{J\in\mathcal J} \int_J \Abs{\sum_{\substack{ P\in \T\\ I_P\supsetneq J}} \epsilon_P\ip f,w_{P_d},w_{P_d}(x)a_P(x)g(x) }^{r'} dx\big)^\frac{1}{r'}
\\
&= \bigg( \sum_{J\in \mathcal J}\Norm  \sum_{\substack{ P\in \T \\ I_P\supsetneq J}}\epsilon_P \ip f,w_{P_d},w_{P_d} a_P g . L^{r'}(J). ^{r'} \bigg)^\frac{1}{r'}.
\end{align*}
We set for $P\in\T$ and $j\geq 1$ 
\begin{align*}
A(P,j)\coloneqq I_P\cap \{x:\ N_{j-1}(x)\notin \mathring \omega_P, \ N_j(x)\in [\omega_{P_u}) \}.
\end{align*}

We gather some  auxiliary calculations in the following lemma:
\begin{lemma}\label{l.aux} Fix a tree $\T$ and a top $T$ of $\,\T$ and consider the partition of $I_T$ into the intervals $J\in\mathcal J$. Let $J\in\mathcal J$ and denote by $J^{(1)}$ the dyadic parent of $J$. There exist bitiles $Q(J)\in\T$ and $P(J)=J^{(1)}\times \omega(J)$ such that:

\noindent{(i)} $Q(J)\leq P(J) \leq T$.

\noindent{(ii)} For every $j\in[1,K(x)]$ we have the pointwise inequality: $ \ind _J \ind_{A(P,j)} \leq \ind_{\{x:\ N_j(x)\in\omega(J)\}}$.

\noindent{(iii)} We have the estimate
\begin{align*}
\int_J |g(x)|^{r'} \sum_{j:\ N_j(x)\in \omega(J)} |a_j(x)|^{r'}dx\lesssim |J|\density(\T)^{r'}.
\end{align*}
\end{lemma}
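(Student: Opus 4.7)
I would treat the up-tree case; the down-tree case is symmetric, and all three statements amount to bookkeeping about dyadic nesting in time and frequency.

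For (i) the approach is to locate $Q(J)$ first and then build $\omega(J)$ around $\omega_T$ at exactly the right scale. Since $J$ is \emph{maximal} among dyadic subintervals of $I_T$ containing no $I_P$ ($P\in\T$), its parent $J^{(1)}$ must contain some $I_P$; I would pick any such as $Q(J)$. I would then let $\omega(J)$ be the unique dyadic interval of length $2/|J^{(1)}|$ containing $\omega_T$ (well-defined since $|\omega_T|=2/|I_T|\leq 2/|J^{(1)}|$ and $\omega_T$ is itself dyadic), and set $P(J):=J^{(1)}\times\omega(J)$, which has area $2$. The bitile inequalities in (i) then reduce to the frequency nestings $\omega_T\subseteq\omega(J)$ (by construction) and $\omega(J)\subseteq\omega_{Q(J)}$ (which follows from $\omega_T\subseteq\omega_{Q(J)}\cap\omega(J)$ together with $|\omega(J)|\leq|\omega_{Q(J)}|$, by dyadic nesting); a standard half-splitting of these intervals then realizes either $\leq_u$ or $\leq_d$ in each case.

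For (ii), I read the inequality as quantifying over every $P\in\T$, all other cases being trivially zero by the definition of $\mathcal J$ and dyadic disjointness. In the remaining case one has $I_P\supseteq J^{(1)}$, and on $A(P,j)$ one has $N_j(x)\in[\omega_{P_u})$, so I would reduce to showing $\omega_{P_u}\subseteq\omega(J)$. Since $P\leq_u T$ gives $\omega_{T_u}\subseteq\omega_{P_u}$ and by construction $\omega_{T_u}\subseteq\omega_T\subseteq\omega(J)$, the intervals $\omega_{P_u}$ and $\omega(J)$ share the nonempty dyadic interval $\omega_{T_u}$ and are therefore nested; the length bound $|\omega_{P_u}|\leq 1/|J^{(1)}|<|\omega(J)|=2/|J^{(1)}|$ forces $\omega_{P_u}\subseteq\omega(J)$.

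Finally, (iii) is read off directly from the density definition applied to the pair $(P,P')=(Q(J),P(J))$, which is legitimate by (i). This yields
\begin{align*}
\int_{J^{(1)}}|g(x)|^{r'}\sum_{j:\,N_j(x)\in\omega(J)}|a_j(x)|^{r'}\,dx\leq|J^{(1)}|\,\density(\T)^{r'}=2|J|\density(\T)^{r'},
\end{align*}
after which restricting the integration to $J\subseteq J^{(1)}$ finishes the proof. The only subtle point in the whole argument is tuning the scale of $\omega(J)$: it must be exactly $2/|J^{(1)}|$ in order to be simultaneously large enough to contain $\omega_T$ (giving $P(J)\leq T$) and small enough to fit inside $\omega_{Q(J)}$ (giving $Q(J)\leq P(J)$), while also matching the area-$2$ constraint for a bitile over $J^{(1)}$.
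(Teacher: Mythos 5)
Your construction is the same as the paper's: pick $Q(J)\in\T$ with $I_{Q(J)}\subseteq J^{(1)}$ (possible by the maximality of $J$, noting $J\subsetneq I_T$, hence $J^{(1)}\subseteq I_T$), take $\omega(J)$ dyadic of length $2/|J^{(1)}|$ with $\omega_T\subseteq\omega(J)\subseteq\omega_{Q(J)}$, and read off (iii) from the density definition applied to the admissible pair $Q(J)\leq P(J)$. Parts (i) and (iii) are correct as you argue them, including the half-splitting step that upgrades the frequency inclusions to the bitile order.

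The one genuine imprecision is in (ii), where you invoke $P\leq_u T$ and dismiss the rest as ``the down-tree case is symmetric.'' Lemma \ref{l.aux} is stated, and later used, for a \emph{general} tree $\T$: in the tree lemma the collection $\mathcal J$ and the intervals $\omega(J)$ are built from the whole tree, and property (ii) is applied in the estimate of $F_{J,d}$ to bitiles $P\in\T_d$, for which one only knows $P\leq_d T$, while $A(P,j)$ is still defined through $\omega_{P_u}$; the genuinely symmetric statement, with $\omega_{P_d}$ in the indicator, pertains to the dual operator $\tilde C_{r,\P}$, not to the down-part of a tree for $C_{r,\P}$. So your nesting chain $\omega_{T_u}\subseteq\omega_{P_u}\subseteq\omega(J)$ does not literally cover those $P$. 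The repair is immediate and is in fact how the paper argues: for any $P\in\T$ with $I_P\cap J\neq\emptyset$ one has $J\subsetneq I_P$, hence $|\omega_P|=2/|I_P|\leq 2/|J^{(1)}|=|\omega(J)|$, and since $\omega_T\subseteq\omega_P\cap\omega(J)$ (using only $P\leq T$), dyadic nesting gives $\omega_P\subseteq\omega(J)$ and therefore $\omega_{P_u}\subseteq\omega(J)$ for up- and down-part bitiles alike. With that single adjustment your proof coincides with the paper's.
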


\begin{proof} Fix some $J\in\mathcal J$. Since $J$ is maximal with the property that it doesn't contain any $I_P$, $P\in\T$, there is some bitile $Q(J)\in\T$ such that $I_{Q(J)}\subseteq J^{(1)}$, where $J^{(1)}$ is the dyadic parent of $J$. Observe that we must have $J^{(1)} \subseteq I_T$. Define the frequency interval $\omega(J)$ with $| \omega(J)|=2/|J^{(1)}|$ and such that $\omega_T \subseteq   \omega(J) \subseteq \omega_{Q(J)}$. Thus the bitile $  P(J) \coloneqq J^{(1)}\times \omega (J)$ satisfies $Q(J)\leq  P(J) \leq T$. This proves \textit{(i)}.

Now for all bitiles $P\in\T$ such that $I_P\cap J\neq \emptyset$ we have $J\subsetneq I_P$ by the maximality of $J$ which implies that $|I_P|\geq |J^{(1)}|$.
For every such $P$ we thus have $\emptyset\neq \omega_T\subseteq \omega_P\cap  \omega(J)$ and so $\omega_P\subseteq  \omega(J)$. We conclude
\begin{align*}
\bigcup_{P\in\T: \ I_P\cap J\neq\emptyset } \omega_{P} \subseteq  \omega(J).
\end{align*}
Let $x$ be such that $\ind_J(x) \ind_{A(P,j)}(x)\neq 0.$ Then  $I_P\cap J\neq \emptyset$ and $N_j(x)\in \omega_{P_u}\subset \omega_P\subseteq \omega(J)$. The previous inclusion thus implies $\ind_J \ind_{A(P,j)}\leq \ind_{\{x:\ N_j(x)\in  \omega (J)  \}}$.

Finally, we have $P(J)\geq Q(J)$ and $Q(J)\in\T$.  Thus
\begin{align*}
\int_J |g(x)|^{r'} \sum_{j:\ N_j(x)\in \omega(J)} |a_j(x)|^{r'}dx &\lesssim |J|\frac{1}{|J^{(1)}|}\int_{J^{(1)}}|g(x)|^{r'} \sum_{j:\ N_j(x)\in \omega(J)} |a_j(x)|^{r'}dx
\\
&=	|J| \frac{1}{|I_{P(J)}|}\int_{I_{P(J)}} |g(x)|^{r'} \sum_{j:\ N_j(x)\in \omega_{P(J)} } |a_j(x)|^{r'}dx
\\
&\leq |J| \density(\T)^{r'},
\end{align*}
by the definition of density.
\end{proof}
For $\tau\in\{u={\rm up},d={\rm down}\}$ define the functions
\begin{align*}
F_{J,\tau} (x)&\coloneqq \ind_J(x)g(x) \sum_{\substack{P \in\T_\tau \\ I_P\supsetneq J} }\epsilon_P \ip f,w_{P_d},w_{P_d}(x) a_P(x) 
\\
&= \ind_J(x)g(x) \sum_{\substack{P \in\T_\tau \\ I_P\supsetneq J} } \sum_{j=1} ^{K(x)} \epsilon_P\ip f,w_{P_d},w_{P_d}(x) a_j(x) \ind_{A(P,j)} (x)	.
\end{align*}
Since every tree $\T$ can be written as a union of its up-part $\T_u$ and its down-part $\T_d$, we have the estimate
\begin{align*}
\norm gC_\T f.L^{r'}(\RR). \leq\big( \sum_{J\in\mathcal J}\norm F_{J,d}.L^{r'}(\RR). ^{r'}\big)^\frac{1}{r'}+\big( \sum_{J\in\mathcal J}\norm F_{J,u}.L^{r'}(\RR). ^{r'}\big)^\frac{1}{r'}.
\end{align*}
We estimate the two terms appearing in the previous sum separately.
\begin{lemma} We have
\begin{align*}
\big( \sum_{J\in\mathcal J}\norm F_{J,d}.L^{r'}(\RR). ^{r'}\big)^\frac{1}{r'}\lesssim |I_T|^\frac{1}{r'} \density(\T) \size(\T).
\end{align*}
\end{lemma}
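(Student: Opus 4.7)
The plan is to leverage the structure of the down-tree $\T_d$ to collapse each wave packet $w_{P_d}$ with $P\in\T_d$ into a constant multiple of $w_{T_d}^\infty$ on its time interval $I_P$. Applying Lemma \ref{l.haar} in its down-tree version yields $w_{P_u}(x)=\tilde\epsilon_{PT}\,w_{T_d}^\infty(x)\,h_{I_P}(x)$. Combining this with the trivial identity $w_{P_u}(x)=r_0(x/|I_P|)\,w_{P_d}(x)$, which follows from $w_{2n+1}=w_{2n}r_0$ and the definitions of the $L^\infty$-normalized packets, and with $r_0(x/|I_P|)\ind_{I_P}(x)=|I_P|^{1/2}h_{I_P}(x)$, one divides out the nonvanishing factor $h_{I_P}$ on its support to obtain the key pointwise identity
\begin{align*}
	w_{P_d}(x)=\tilde\epsilon_{PT}\,|I_P|^{-1/2}\,w_{T_d}^\infty(x),\qquad x\in I_P,
\end{align*}
and, as a consequence, $\langle f,w_{P_d}\rangle w_{P_d}(x)=\langle fw_{T_d}^\infty\rangle_{I_P}\,w_{T_d}^\infty(x)$ on $I_P$. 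The sum defining $F_{J,d}$ is therefore, up to the unimodular factor $w_{T_d}^\infty$, a sum of dyadic conditional expectations of $fw_{T_d}^\infty$ at scales $\supsetneq|J|$.

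Writing $V_P=\langle fw_{T_d}^\infty\rangle_{I_P}\in X$ and using $J\subset I_P$ together with $|w_{T_d}^\infty|=1$ on $I_T$, the problem reduces to estimating pointwise on $J$ the expression $\bigl|\sum_P\epsilon_P V_P a_P(x)\bigr|$. I would apply H\"older's inequality in the conjugate exponents $(r',r)$. The first factor $\bigl(\sum_P|a_P(x)|_{X^*}^{r'}\bigr)^{1/r'}$ is at most $1$: because every $\omega_{P_d}$ for $P\in\T_d$ contains the common interval $\omega_{T_d}$, the sibling halves $\omega_{P_u}$ for distinct $P\in\T_d$ are pairwise disjoint; hence at most one $P\in\T_d$ activates the indicator $\ind_{A(P,j)}(x)$ for each pair $(j,x)$, which gives $\sum_P|a_P|^{r'}\le\sum_j|a_j|^{r'}\le1$. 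In the second factor $\bigl(\sum_P|V_P|^r\bigr)^{1/r}$, each $|V_P|$ is dominated pointwise by $\size(\T)$, as the singleton $\{P\}\subset\T$ is itself an up-subtree whose size reduces to precisely the average $|\langle fw_{T_d}^\infty\rangle_{I_P}|$ (using our identity and the $L^q$-norm of $w_{P_d}$).

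The final step is to trade one power of size via $|V_P|^r\le\size(\T)^{r-r'}|V_P|^{r'}$, integrate the resulting pointwise bound against $|g|^{r'}$ on $J$, sum over $J\in\mathcal J$ (which partitions $I_T$), and invoke Fubini to interchange the sum over $P$ with the integral. This recasts the total into a sum of terms of the form $\int_{I_P}|g(x)|^{r'}\sum_j|a_j(x)|^{r'}\ind_{\{N_j(x)\in\omega_P\}}\,dx$, each dominated by $|I_P|\,\density(\T)^{r'}$ straight from the definition of density applied with $P'=P$. The main obstacle, requiring the most care, is to arrange the H\"older and size/density splittings so that the weights $|a_j|^{r'}$ remain attached to the indicators $\ind_{\{N_j\in\omega_P\}}$ at the exact moment density is invoked; detaching them prematurely, for instance by passing through $\sum_P|a_P|^{r'}\le1$ before the density step, would introduce a spurious logarithmic factor in $|I_T|/|J|$ from counting the dyadic scales contributing at each point and degrade the bound below the claimed $\size(\T)\density(\T)|I_T|^{1/r'}$.
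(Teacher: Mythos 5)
Your algebraic reduction is correct and even elegant: for $P$ in a down-tree with top $T$ one indeed has $w_{P_d}(x)=\tilde\epsilon_{PT}|I_P|^{-1/2}\ind_{I_P}(x)w_{T_d}^\infty(x)$, so that $\langle f,w_{P_d}\rangle w_{P_d}(x)=\langle fw_{T_d}^\infty\rangle_{I_P}\,w_{T_d}^\infty(x)$ on $I_P$; likewise $|V_P|\le\size(\T)$ via singleton trees and $\sum_P|a_P(x)|^{r'}\le 1$ are both valid. The gap is in the final bookkeeping. However you distribute the exponents, your scheme pays the density gain once per bitile: after summing over $J$ and interchanging with the sum over $P$ you are left with at best $\density(\T)^{r'}\sum_{P\in\T_d}|V_P|^{r'}|I_P|\le\density(\T)^{r'}\size(\T)^{r'}\sum_{P\in\T_d}|I_P|$, and $\sum_{P\in\T_d}|I_P|$ is \emph{not} $O(|I_T|)$: a down-tree may contain, at each of $K$ scales, a full packing of time intervals of $I_T$, so $\sum_P|I_P|\eqsim K|I_T|$. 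The alternative organization fares no better: the H\"older factor $\bigl(\sum_{P:\,I_P\supsetneq J}|V_P|^r\bigr)^{1/r}$ involves $\eqsim\log_2(|I_T|/|J|)$ bitiles above $J$, and bounding each $|V_P|$ by $\size(\T)$ is sharp — take $fw_{T_d}^\infty$ constant on $I_T$, so all the averages $V_P$ are equal while $\size(\T)$ stays comparable to that constant (up-trees contained in a down-tree consist of bitiles with a single common frequency interval, so the size cannot see a multiscale accumulation). So the logarithmic loss you worry about is produced by your own splitting rather than avoided by it; keeping $|a_j|^{r'}$ attached to $\ind_{\{N_j\in\omega_P\}}$ and invoking density with $P'=P$ does not repair this.

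What closes the gap in the paper is a disjointness statement strictly stronger than the one you use: for $P,P'\in\T_d$ with $I_P,I_{P'}\supsetneq J$, the sets $A(P,j)$ are pairwise disjoint over \emph{all pairs} $(P,j)$, not merely one $P$ per $j$ and one $j$ per $P$. The proof uses that $j\mapsto N_j(x)$ is increasing together with the nestedness of the $\omega_P$ and the disjointness of the $\omega_{P_u}$: if $x\in A(P,j)$ then $N_{j-1}(x)\le\min\omega_P$, which rules out $x\in A(P',j')$ for every $j'<j$, with a symmetric argument for $j'>j$ and the disjointness of the $\omega_{P'_u}$ handling $j'=j$. Consequently at each $x\in J$ the entire sum collapses to a single term, giving $|F_{J,d}(x)|\le\size(\T)\,|g(x)|\bigl(\sum_{j:\,N_j(x)\in\omega(J)}|a_j(x)|^{r'}\bigr)^{1/r'}$, where the frequency constraint is localized to the single interval $\omega(J)$ of Lemma \ref{l.aux}. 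Density is then invoked \emph{once per} $J$, through the auxiliary bitile $P(J)=J^{(1)}\times\omega(J)\ge Q(J)\in\T$ of Lemma \ref{l.aux}(iii), producing $|J|\,\density(\T)^{r'}$, and the sum over the partition $\mathcal J$ yields $|I_T|$ with no count over scales. Replacing your H\"older-in-$P$ and per-$P$ density step by this pointwise collapse and per-$J$ density application gives the stated bound.
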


\begin{proof} The function $F_{J,d}$ can be written as
\begin{align*}
F_{J,d}(x)=\ind_J(x)g(x) \sum_{\substack{P \in\T_d \\ I_P\supsetneq J} } \sum_{j=1} ^{\infty} \epsilon_P\ip f,w_{P_d},w_{P_d}(x) a_j(x) \ind_{[j,\infty)}(K(x)) \ind_{A(P,j)} (x).
\end{align*}
Now consider two pairs $(j,P)\neq (j',P')$ appearing in the previous sum, with $ j,j' \geq 1 $ and $P,P'\in \T_d$ such that $I_P,I_{P'}\supsetneq J$. We claim that $A(P,j)\cap A(P',j')=\emptyset$. Indeed, since $\T_d$ is a down-tree we have $\omega_{T_d}\subseteq \omega_{P_d}\cap \omega_{P' _d}\neq \emptyset$ and thus the bigger interval contains the smaller. For example we have $\omega_{P' _d} \subseteq \omega_{P_d}\Rightarrow \omega_{P'} \subseteq \omega_{P}.$  

If $j' <j\Leftrightarrow j' \leq j-1$ and $x\in A(P,j)$ then we have $(x,N_j(x))\in P_u $ and $(x,N_{j-1}(x)) \notin \mathring P$. Since $N_j(x)>N_{j-1}(x)$ we get that necessarily $N_{j-1}(x)\leq \min  \omega_P$. Thus
\begin{align*}
N_{j'}(x)\leq N_{j-1}(x) \leq \min\omega_{P}\leq \min \omega_{P'}\Rightarrow N_{j'}(x)\notin \omega_{P' _u}\Rightarrow x\notin A(P',j').
\end{align*}

Suppose now that $j'> j\Leftrightarrow j\leq j'-1$. If $x\in A(P',j')$ then $(x,N_{j'}(x))\in P' _u$ and $(x,N_{j'-1}(x))\notin \mathring P' $. Since $N_{j'}(x)\geq N_{j'-1}(x)$ we must have $N_{j'-1}(x)\leq \min \omega_{P'}$. Thus
\begin{align*}
N_j(x)\leq N_{j'-1}(x)\leq \min \omega_{P'} =\min \omega_{P' _d}< \min \omega_{P_u}
\end{align*}
since $\omega_{P_d '} \subseteq \omega_{P_d}$. Thus $N_j(x)\notin \omega_{P_u}$ which implies that $x\notin A(P,j).$
In every case we get that $A(P,j)\cap A(P',j')\neq \emptyset \Rightarrow j=j'$. However all the up-parts $P_u$, $P\in \T_d$, are disjoint since $\T_d$ is a down-tree, so we cannot have $N_j(x) \in \omega_{P_u}\cap\omega_{P' _u}$ with $P\neq P'$. We conclude that $A(P,j)\cap A(P',j')\neq \emptyset \Rightarrow (P,j)=(P',j')$ as claimed.

The disjointness property of the $A(P,j)$'s implies that
\begin{align*}
|F_{J,d}(x)| &\leq |g(x)| \sup_{\substack{P \in\T_d \\ I_P\supsetneq J} }\sup_{1\leq j\leq K(x)}\frac{|\ip f ,w_{P_d},|}{|I_P|^\frac{1}{2} }|a_j(x)| \ind_J(x) \ind_{A(P,j)}(x)
\end{align*}
Furthermore, by the definition of size it is not hard to see that $|\ip f,w_{P_d},/|I_P|^\frac{1}{2}\leq \size(\T)$, by testing the definition against a tree consisting of a single bitile $P$. Combining this estimate with (ii) of Lemma~\ref{l.aux} and the previous display we get that for any fixed $x$
\begin{align*}
|F_{J,d}(x)|&\leq \size(\T) |g(x)| \sup_{1\leq j \leq K(x)} |a_j(x)|\ind_{\{y:\ N_j(y)\in  \omega(J)   \}}(x)
\\
&\leq \size(\T) |g(x)|\  \bigg(\sum_{j:\ N_j(x)\in \omega(J)} |a_j(x)|^{r'} \bigg)^\frac{1}{r'}.
\end{align*}
Integrating the previous estimate raised to the power $r'$ yields
\begin{align*}
\norm F_{d,J}.L^{r'}(\RR). ^{r'} &\lesssim \size(\T) ^{r'}\int_J |g(x)|^{r'} \sum_{j:\ N_j(x)\in \omega(J)} |a_j(x)|^{r'} dx \lesssim |J| \size(\T) ^{r'} \density(\T)^{r'},
\end{align*}
by Lemma \ref{l.aux}, (iii).
Since the collection $\mathcal J$ partitions $I_T$, summing in $J\in\mathcal J$ gives
\begin{align*}
\big(\sum _{J\in\mathcal J} \norm F_{d,J}.L^{r'}(\RR). ^{r'}\big)^\frac{1}{r'} \lesssim \size(\T)  \density(\T) \big(\sum_{J\in\mathcal J} |J| \big)^\frac{1}{r'} =|I_T|^\frac{1}{r'} \size(\T) \density(\T), 
\end{align*}
as we wanted.
\end{proof}
The proof for the up-part is more involved:
\begin{lemma} We have
\begin{align*}
\big( \sum_{J\in\mathcal J}\norm F_{J,u}.L^{r'}(\RR). ^{r'}\big)^\frac{1}{r'}\lesssim |I_T|^\frac{1}{r'} \density(\T) \size(\T).
\end{align*}
\end{lemma}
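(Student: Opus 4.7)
The plan is to convert the Walsh wave packets attached to an up-tree into Haar functions via Lemma \ref{l.haar}, recognise the resulting inner sum as a controlled difference of martingale partial sums, and then apply the vector-valued L\'epingle inequality of Pisier--Xu (Theorem \ref{p.leptile}), which is available thanks to the cotype implication of tile-type $q$ (Proposition \ref{p.cotype}).

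First I would write, for each $P\in\T_u$, $w_{P_d}=\epsilon_{PT}\,w_{T_u}^\infty\,h_{I_P}$ and set $\tilde f:=f\cdot w_{T_u}^\infty$ (so $\lvert\tilde f\rvert=\lvert f\rvert$). Then $F_{J,u}(x)=\ind_J(x)g(x)w_{T_u}^\infty(x)\sum_j a_j(x)S_j(x)$, where
\begin{align*}
S_j(x)\coloneqq\sum_{\substack{P\in\T_u,\,I_P\supsetneq J\\ x\in A(P,j)}}\epsilon_{PT}\,\ip \tilde f,h_{I_P},h_{I_P}(x).
\end{align*}
For fixed $x\in J$ the bitiles $P\in\T_u$ with $I_P\ni x$ form a chain $P_0\subsetneq P_1\subsetneq\cdots$ (ordered by increasing $|I_{P_k}|$), and the associated frequency intervals $\omega_{P_k,u}$ are nested and shrink toward $\omega_{T_u}$. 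I would then check that for each such $P_k$ and each $x$ there is at most one $j$ with $x\in A(P_k,j)$ (once $N_j(x)$ has entered $\omega_{P_k,u}$, the next $N_{j-1}$ lies in $\mathring\omega_{P_k}$), so that as $j$ varies the $k$-indices contributing to the distinct $S_j(x)$ are pairwise disjoint intervals of the chain.

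Next I would observe that $\Sigma_k(x)\coloneqq\sum_{m\le k}\ip\tilde f,h_{I_{P_m}},h_{I_{P_m}}(x)$ is a subsequence of the dyadic conditional expectations $\Exp_{2^{-n}}H(x)$ of the function $H\coloneqq\sum_{P\in\T_u}\ip\tilde f,h_{I_P},h_{I_P}$, because the Haar coefficient at scale $|I_{P_k}|$ is exactly the jump of $\Exp_{\bullet}H$ between scales $|I_{P_k}|$ and $|I_{P_k}|/2$. By the disjointness above, $S_j(x)=\Sigma_{k_j^*(x)}(x)-\Sigma_{k'_{j-1}(x)}(x)$ for disjoint index intervals, and reordering by left endpoint yields
\begin{align*}
\Big(\sum_j \lvert S_j(x)\rvert^r\Big)^{1/r}\le\Norm(\Exp_{2^{-n}}H(x))_n.\vr(X).\eqqcolon \mathcal{V}(x).
\end{align*}
Now I would apply Hölder in $j$, using $\sum_j|a_j(x)|^{r'}\le 1$ and Lemma \ref{l.aux}(ii) to restrict the active indices to $\{N_j(x)\in\omega(J)\}$, obtaining $|F_{J,u}(x)|\le\ind_J(x)|g(x)|A_J(x)\mathcal{V}(x)$ with $A_J(x)\coloneqq(\sum_j|a_j(x)|^{r'}\ind_{\{N_j(x)\in\omega(J)\}})^{1/r'}$.

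To finish, I would raise to the power $r'$, integrate, and apply Hölder to the pair $(|g|^{r'}A_J^{r'},\mathcal{V}^{r'})$ with carefully chosen exponents. The first factor is pointwise $\le 1$ and satisfies $\int_J|g|^{r'}A_J^{r'}\lesssim |J|\density(\T)^{r'}$ by Lemma \ref{l.aux}(iii); the second is controlled globally by Theorem \ref{p.leptile}, namely $\|\mathcal{V}\|_{L^q(\RR;X)}\lesssim\|H\|_{L^q(\RR;X)}\le|I_T|^{1/q}\size(\T)$, where the latter uses the definition of size together with Lemma \ref{l.haar} applied to the sub-tree $\T_u$. Summing the $J$-localised estimate over $J\in\mathcal{J}$ and using $\sum_J|J|=|I_T|$ gives the claimed bound $|I_T|^{1/r'}\size(\T)\density(\T)$. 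The main obstacle will be Step~4: the \emph{local} density estimate on $|g|^{r'}A_J^{r'}$ must be combined with the \emph{global} $L^q$-bound on $\mathcal{V}$ so as to recover exactly $\density(\T)\size(\T)$ (not fractional powers); a naive Hölder split yields $\density(\T)^{r'(q-r')/q}$ instead of $\density(\T)^{r'}$, so one has to either interpolate with the trivial $L^\infty$-bound $A_J\le 1$ more carefully, or upgrade the $L^q$-bound on $\mathcal{V}$ to local $L^p$-bounds for all $p$ via a John--Nirenberg/BMO-type argument on $H$ (whose BMO norm is $\lesssim\size(\T)$) before combining.
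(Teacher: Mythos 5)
Your proposal reproduces the paper's strategy faithfully up to the pointwise bound: Haar conversion via Lemma \ref{l.haar}, the observation that for fixed $x,j$ the contributing bitiles form a consecutive block of the nested chain so that each inner sum is a difference of two dyadic conditional expectations at well-separated scales, the resulting domination of $\big(\sum_j|S_j(x)|^r\big)^{1/r}$ by an $r$-variation of the martingale $\Exp_k H$, and H\"older in $j$ together with Lemma \ref{l.aux}(ii)--(iii). But the step you flag as the ``main obstacle'' is a genuine gap, and it is precisely where the paper's key idea lives; neither of your proposed fixes (interpolating with $A_J\le 1$, or a John--Nirenberg upgrade of $H$) is what is needed, and as stated they do not yield a proof. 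The missing observation is scale localization plus local constancy: since every contributing $I_P\supsetneq J$, only scales $2^k\ge|J|$ occur, and the truncated variation $\norm \Exp_k(w_{T_u}^\infty\tilde f)(x).\mathcal V^r_{2^k\ge|J|}.$ is \emph{constant on $J$} (conditional expectations at scales $\ge|J|$ are constant on the dyadic interval $J$). Hence on each $J$ the density factor and the variation factor decouple with no H\"older split between them: one bounds the constant value by $\inf_J M\big(\norm\Exp_k(w_{T_u}^\infty\tilde f).\mathcal V^r.\big)$ and gets
\begin{align*}
\norm F_{J,u}.L^{r'}(\RR).^{r'}\lesssim \density(\T)^{r'}\int_J \Big[M\big(\norm \Exp_k (w_{T_u}^\infty\tilde f)(\cdot).\mathcal V^r.\big)(y)\Big]^{r'}dy .
\end{align*}

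Summing over the partition $\mathcal J$ of $I_T$, using the $L^{r'}$-boundedness of $M$ and then L\'epingle (Theorem \ref{p.leptile}) at the exponent $p=r'$ (legitimate because $1<r'<q'\le 2\le q<r$), one arrives at $\density(\T)\,\lVert\tilde f\rVert_{L^{r'}(\RR;X)}$ with $\tilde f=\sum_{P\in\T_u}\epsilon_P\ip f,w_{P_d},w_{P_d}$ --- there is no need for, and no loss from, applying L\'epingle in $L^q$ as you propose. The exponent mismatch between $r'$ and the $L^q$-based size is then absorbed harmlessly at the very end: after removing the signs $\epsilon_P$ by the UMD property and rewriting via Lemma \ref{l.haar}, $\tilde f$ is supported in $I_T$, so H\"older on $I_T$ (using $r'\le q$) gives $\lVert\tilde f\rVert_{L^{r'}}\le |I_T|^{1/r'}\big(\tfrac1{|I_T|}\int_{I_T}|\cdot|^q\big)^{1/q}\le |I_T|^{1/r'}\size(\T)$. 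This is why the correct proof produces $\density(\T)\size(\T)$ with no fractional powers; your global $L^q$ bound $\lVert\mathcal V\rVert_{L^q}\lesssim|I_T|^{1/q}\size(\T)$ cannot be combined with the local density estimate in the way you attempt, and a BMO/John--Nirenberg argument for the \emph{variation} of $H$ would itself require a separate (nontrivial) localized L\'epingle-type input that your sketch does not supply.
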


\begin{proof}  We fix some $J\in\mathcal J$ and $x\in J$ so that $F_{J,u}(x)\neq 0$. We use Lemma \ref{l.haar} in order to write the function $F_{J,u}$ in the form:
\begin{align*}F_{J,u}(x)& =\ind_J(x)g(x)w_{T_u } ^\infty (x)  \sum_{\substack{1\leq j\leq K(x) \\ j:\ N_j(x)\in\omega(J)}}  \sum_{\substack{P \in\T_u \\ I_P\supsetneq J} } \epsilon_P\epsilon_{PT}\ip f,w_{P_d}, h_{I_P}(x) a_j(x) \ind_{A(P,j)} (x),
\end{align*}
where $\epsilon_P\epsilon_{PT}\in\{-1,+1\}$ and $w_{T_u} ^\infty$ is unimodular and depends only on the upper tile of the top $T$ of $\T$. Now for every $1\leq j\leq K(x)$ with $N_j(x)\in \omega(J)$ , consider the inner sum
\begin{align*}
 \sum_{\substack{P \in\T_u \\ I_P\supsetneq J} }\epsilon_P\epsilon_{PT}\ip f,w_{P_d}, h_{I_P}(x) a_j(x) \ind_{A(P,j)} (x).
\end{align*}
Let us consider $j,x$ such that $\ind_{A(P,j)}(x)\neq 0$ in the previous sum and examine which bitiles $P\in\T_u$ contribute to it. For such bitiles we must have $N_{j-1}(x)\notin \omega_P$ and $N_j(x)\in\omega_{P_u}$. Now the frequency intervals $\omega_{P_u}$, $P\in\T_u$, all contain the top interval $\omega_{T_u}$ and thus they are nested.  This nestedness property implies that if $N_j(x)\in \omega_{P' _u}$ is satisfied for some $P'\in\T_u$ then it will also be satisfied for all $P\in\T_u$ with $\omega_{P_u}\supset \omega_{P' _u}$. Likewise, all the $\omega_P$'s of bitiles $P\in\T_u$ that contribute to the sum are nested since they all contain the top interval $\omega_{T}$. Thus, if $N_{j-1}(x)\notin \omega_{P'}$ for some $P'\in\T_u$ then the same condition will also be satisfied for all $P\in\T_u$ with $\omega_P\subset \omega_{P'}$. We conclude that, for each $x,j$, the bitiles that contribute to the sum are nested, their frequency intervals all contain some minimum frequency interval $\omega_{x,j}$ and are contained in some maximum frequency interval $\Omega _{x,j}$. Now observe that the time intervals of these bitiles are also nested since they all contain $J$. Since the area of each bitile is fixed we conclude that for every $J\in\mathcal J$, $x\in J$ and $1\leq j \leq K(x)$, there are some dyadic intervals $ I_{x,j} ^{\rm small}, I_{x,j} ^{\rm large}$ such that $J\subset  I_{x,j} ^{\rm small} \subsetneq I_{x,j} ^{\rm large}$
and
\begin{align*}
\Abs{\sum_{\substack{P \in\T_u \\ I_P\supsetneq J} }\epsilon_P \epsilon_{PT}\ip f,w_{P_d}, h_{I_P}(x) a_j(x) \ind_{A(P,j)} (x)}=\Abs{\sum_{\substack {P\in\T_u \\ I_{x,j} ^{\rm small}  \subsetneq  I_P \subseteq I_{x,j} ^{\rm large} } }\epsilon_P  \epsilon_{PT}\ip f,w_{P_d}, h_{I_P}(x) a_j(x) }.
\end{align*}
In fact we will have that $I_{x,j} ^{\rm small}$ is the time interval corresponding to $\Omega_{x,j}$ and that $I_{x,j} ^{\rm large}$ is the time interval corresponding to $\omega_{x,j}$. From this it is also not hard to see that we also have the property $ I_{x,j} ^{\rm large}\subsetneq I_{x,j+1} ^{\rm small}$ for each $j$.
Based on these observations and notations we can now estimate
\begin{align*}
&\Abs{\sum_{\substack{P \in\T_u \\ I_P\supsetneq J} }\epsilon_P \epsilon_{PT}\ip f,w_{P_d}, h_{I_P}(x) a_j(x) \ind_{A(P,j)} (x)}=\Abs{\sum_{\substack {P\in\T_u \\ I_{x,j} ^{\rm small}  \subsetneq  I_P \subseteq I_{x,j} ^{\rm large} } }\epsilon_P  \epsilon_{PT}\ip f,w_{P_d}, h_{I_P}(x) a_j(x) }.
\\
&\quad\quad \leq \Abs{ a_j(x) w_{T_u} ^\infty(x)(\Exp_{I_{x,j} ^{\rm small }}-\Exp_{I_{x,j} ^{\rm large } }	)\big(\sum_{P\in\T_u}\epsilon_P\epsilon_{PT}\ip f,w_{P_d}, h_{I_P} \big)(x)}
\\
&\quad\quad = \Abs{a_j(x) (\Exp_{I_{x,j} ^{\rm small }}-\Exp_{I_{x,j} ^{\rm large } }	)\big(w_{T_u} ^\infty \sum_{P\in\T_u}\epsilon_P \ip f,w_{P_d},w_{P_d} \big)(x)},
\end{align*}
since $|w_{T_u} ^\infty |= 1$. The expectation operator $\Exp_I$ is defined  in \eqref{e.condk}.

Remember that $\Exp_{\ell}f$ denotes the martingale of dyadic averages of $f$ with respect to dyadic intervals of length $2^\ell$. Summing in $j\in[1,K(x)]$ for which $N_j(x)\in\omega(J)$ and using H\"older's inequality we get
\begin{align*}
|F_{J,u}(x)| & \leq \ind_J (x) |g(x)|  \sum_{\substack{1\leq j\leq K(x) \\ j:\ N_j(x)\in\omega(J)}}  \Abs{a_j(x) (\Exp_{I_{x,j} ^{\rm small }}-\Exp_{I_{x,j} ^{\rm large } }	)\big(w_{T_u} ^\infty \sum_{P\in\T_u} \epsilon_P \ip f,w_{P_d},w_{P_d} \big)(x)}
\\
&\leq \ind_J(x)|g(x)| \ \bigg(	\sum_{\substack{1\leq j\leq K(x) \\ j:\ N_j(x)\in\omega(J)}}  |a_j(x)|^{r'}\bigg)^\frac{1}{r'} \bigg( \sum_{\substack{1\leq j\leq K(x) \\ j:\ N_j(x)\in\omega(J)}} \Abs{  (\Exp_{\ell_j}-\Exp_{\ell' _j})  (w_{T_u} ^\infty \tilde f) (x)}^r \bigg)^\frac{1}{r},
\end{align*}
where $\ell_1,\ell_2,\ldots,\ell_1 ',\ell_2 ',\ldots,$ are integers with $ |J|\leq 2^{\ell_1}<2^{\ell_1 {'}}<2^{\ell_2}<2^{\ell_2 {'}}<\ldots<2^{\ell_j }<2^{\ell_j {'}}<\ldots , $ and $\tilde f\coloneqq  \sum_{P\in\T_u}\epsilon_P \ip f,w_{P_d},w_{P_d} $. Taking the supremum over all such choices of integers $\ell_j,\ell_j  {'} $ for $1\leq j \leq K$ and all positive integers $K$ and integrating over $J$ gives the estimate
\begin{align*}
\int_J |F_{J,u}(x)|^{r'} dx & \leq \int_J |g(x)|^{r'} \sum_{j: \ N_j(x)\in \omega(J)}|a_j(x)|^{r'} \| \, \Exp_k( w_{T_u} ^\infty \tilde f) (x) \|_{\mathcal V^r_{2^k\geq |J|}} ^{r'} dx.
\end{align*}
Here we denote 
\begin{align*}
\|b_k\|_{\mathcal V^r _{2^k\geq A}}\coloneqq \sup_K \sup_{\substack{\ell_0<\ell_1<\cdots<\ell_K \\ 2^{\ell_j}\geq A,\ j=0,\ldots,K}} \big(	\sum_{j=1} ^K \|\beta_{\ell_{j+1}}-\beta_{\ell_j}\|^r\big)^\frac{1}{r}.
\end{align*} 

The function  $\| \Exp_k w_{T_u} ^\infty \tilde f (\cdot) \|_{\mathcal V^r_{2^k\geq |J|}}$ is constant on $J$ thus 
\begin{align*}
\|F_{J,u}\|_{L^{r'}(\RR)} ^{r'}& \leq \int_{J} |g(x)|^{r'}  \sum_{j: \ N_j(x)\in \omega(J)}|a_j(x)|^{r'} \bigg(\frac{1}{|J|}\int_J   \| \, \Exp_k (w_{T_u} ^\infty \tilde f) (z) \|_{\mathcal V^r_{2^k \geq |J|}} dz\bigg) ^{r'} dx
\\
&\leq |J| \density(\T)^{r'}  \inf_{ J} \big[ M\big (  \|\, \Exp_k (w_{T_u} ^\infty \tilde f)(\cdot)  \|_{\mathcal V^r } \big) \big]^{r'}
\\
&\leq \density(\T)^{r'} \int_J  \big[M\big(  \| \, \Exp_k (w_{T_u} ^\infty \tilde f)(\cdot)  \|_{\mathcal V^r }\big)(y) \big]^{r'} dy,
\end{align*}
where in the second inequality we also used Lemma \ref{l.aux}, (iii). Here we remember that $M$ is the dyadic maximal operator defined in \S~\ref{s.notation}. Now we sum over $J\in\mathcal J$ and use the boundedness of $M$ on $L^{r'}$ to get
\begin{align*}
\big(\sum_{J\in\mathcal J}\|F_{J,u}\|_{L^{r'}(\RR)} ^{r'}\big)^\frac{1}{r'}&\leq \density(\T)\bigg( \int_{I_T}  \big[M(  \|\, \Exp_k (w_{T_u} ^\infty \tilde f)(\cdot)  \|_{\mathcal V^r }(y) \big]^{r'} dy \bigg)^\frac{1}{r'}
\\
&\lesssim \density(\T)\bigg( \int   \big[  \| \, \Exp_k (w_{T_u} ^\infty \tilde f)(y)  \|_{\mathcal V^r }  \big]^{r'} dy \bigg)^\frac{1}{r'}
\\
& \leq  \density (\T) \|\tilde f \|_{L^{r'}(\RR)}.
\end{align*}
where in the last inequality we have used the vector-valued L\'epingle inequality from Proposition \ref{p.leptile}. Observe that the use of Proposition~\ref{p.leptile} is allowed since $r' < q'\leq 2 \leq q<r$. Now Lemma~\ref{l.haar} allows us to write 
\begin{align*}
	\|\tilde f \|_{L^{r'}(\RR)}&=\Norm \sum_{P\in\T_u}\epsilon_P \ip f,w_{P_d},w_{P_d}. L^{r'}(\RR).=\Norm \sum_{P\in\T_u}\epsilon_P \ip f w_{T_u} ^\infty,h_{I_P},h_{I_P}. L^{r'}(\RR).
	\\
	&\lesssim \Norm \sum_{P\in\T_u}\ip f w_{T_u} ^\infty,h_{I_P},h_{I_P}. L^{r'}(\RR). ,
\end{align*}
the last inequality following by the UMD property of $X$. By another use of Lemma~\ref{l.haar} and H\"older's inequality we have
\begin{align*}
 \Norm \sum_{P\in\T_u}\ip f w_{T_u} ^\infty,h_{I_P},h_{I_P}. L^{r'}(\RR).&=|I_T|^\frac{1}{r'} \bigg(\frac{1}{|I_T|} \int_{I_T}\Abs{\sum_{P\in\T_u}  \ip f,w_{P_d},w_{P_d}(x) }^{r'}dx \bigg)^\frac{1}{r'}
\\
&\leq |I_T|^\frac{1}{r'} \bigg(\frac{1}{|I_T|} \int_{I_T}\Abs{\sum_{P\in\T_u}  \ip f,w_{P_d},w_{P_d}(x) }^{q}dx \bigg)^\frac{1}{q}\leq |I_T|^\frac{1}{r'}\size(\T)
\end{align*}
by the definition of size. Combining the last three displays we get
\begin{align*}
	\big(\sum_{J\in\mathcal J}\|F_{J,u}\|_{L^{r'}(\RR)} ^{r'}\big)^\frac{1}{r'}&\lesssim \density(\T)\size(\T)|I_T|^\frac{1}{r'}
\end{align*}
which is the desired estimate.
\end{proof}

\section{The size and density lemmas} \label{s.sensity}Let $X$ be a UMD Banach space with tile-type $q\geq 2$. In this section we recall the standard selection algorithms in terms of density and size. In terms of density we have:
\begin{lemma}[Density lemma]\label{l.density} Let $\P$ be a finite collection of bitiles and $\delta>0$. Define $\density$ with respect to some function $g:\RR\to X^*$ with $|g|\leq \ind_E$, where $E\subset \RR$ is a measurable set of finite measure, and a sequence $\{a_j(x)\}_j$ with $\sum_j |a_j(x)|^{r'}=1$. For a given $\Delta>0$, there exists a decomposition
\begin{align*}
\P=\P_{\operatorname{sparse}}\cup \bigcup_j \T_j,
\end{align*} 
where each $\T_j$ is a tree,
\begin{equation*}
   \density(\P_{\operatorname{sparse}})\leq  \Delta,
\end{equation*}
and for all dyadic $J$:
\begin{equation*}
  \sum_{j:I_{T_j}\subset J} |I_{T_j}| \leq \Delta ^{-r'}|E\cap J|.
\end{equation*}
\end{lemma}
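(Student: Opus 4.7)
The plan is to run a greedy selection algorithm that peels off trees with large density one at a time, and then to verify the packing estimate via a disjointness property in the time-frequency plane. Initialize $\P_0\coloneqq \P$. At step $k\geq 0$, if there exist $P\in\P_k$ and some bitile $P'\geq P$ (not necessarily in $\P$) witnessing
\begin{align*}
\frac{1}{|I_{P'}|}\int_{I_{P'}}|g(x)|^{r'}\sum_{j:\,N_j(x)\in\omega_{P'}}|a_j(x)|^{r'}\,dx>\Delta^{r'},
\end{align*}
I would choose such a $P'$ with \emph{maximal} $|I_{P'}|$, call it $T_k$, form the tree $\T_k\coloneqq \{P\in\P_k:\ P\leq T_k\}$, and set $\P_{k+1}\coloneqq \P_k\setminus\T_k$. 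Because $\P$ is finite the procedure terminates, and the residual collection $\P_{\operatorname{sparse}}$ has $\density(\P_{\operatorname{sparse}})\leq \Delta$ by construction.

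The key geometric step — and in my view the main obstacle — is the disjointness claim: the sets
\begin{align*}
A_k\coloneqq\{(x,j)\in\RR\times\N:\ x\in I_{T_k},\ N_j(x)\in\omega_{T_k}\}
\end{align*}
are pairwise disjoint. To prove it, suppose $(x_0,j_0)\in A_k\cap A_\ell$ with $k<\ell$. Since the time intervals $I_{T_k}, I_{T_\ell}$ both contain $x_0$ they are nested, and since the frequency intervals $\omega_{T_k},\omega_{T_\ell}$ both contain $N_{j_0}(x_0)$ they are nested as well; combined with the area-$2$ constraint on bitiles, this forces $T_k$ and $T_\ell$ to be comparable in the bitile order. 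If $T_\ell\leq T_k$, then the witness condition at step $\ell$ gives some $P\in\P_\ell\subseteq \P_k$ with $P\leq T_\ell\leq T_k$, but such a $P$ would already have been removed into $\T_k$, a contradiction. If instead $T_k\leq T_\ell$ with $T_k\neq T_\ell$, then $|I_{T_\ell}|>|I_{T_k}|$; moreover at step $k$ there existed $P\in\P_k$ with $P\leq T_k\leq T_\ell$, and the density integral that certifies $T_\ell$ is intrinsic to the bitile $T_\ell$, so $T_\ell$ was already a valid witness at step $k$, contradicting the maximality of $|I_{T_k}|$.

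Once disjointness is secured, the packing bound follows immediately. For each selected tree
\begin{align*}
\Delta^{r'}|I_{T_k}|<\int_{I_{T_k}}|g(x)|^{r'}\sum_{j:\,N_j(x)\in\omega_{T_k}}|a_j(x)|^{r'}\,dx.
\end{align*}
Summing over those $k$ with $I_{T_k}\subset J$, the disjointness of the $A_k$ gives the pointwise bound $\sum_{k:I_{T_k}\subset J}\ind_{I_{T_k}}(x)\ind_{\omega_{T_k}}(N_j(x))\leq \ind_J(x)$, so that together with $|g|\leq \ind_E$ and $\sum_j|a_j|^{r'}=1$ we conclude
\begin{align*}
\Delta^{r'}\sum_{k:\,I_{T_k}\subset J}|I_{T_k}|\leq \int_J |g(x)|^{r'}\sum_j |a_j(x)|^{r'}\,dx\leq |E\cap J|.
\end{align*}
The delicate point throughout is the maximality rule in the selection of $T_k$; without it, case B of the disjointness argument is not ruled out and the packing inequality fails.
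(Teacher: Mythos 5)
Your proof is correct and follows essentially the same route as the paper's: a greedy selection of tops whose density averages exceed $\Delta^{r'}$, with the packing bound then following from the pairwise disjointness of the sets $\{x\in I_{T_k}:\ N_j(x)\in\omega_{T_k}\}$ together with $|g|\leq\ind_E$ and $\sum_j|a_j(x)|^{r'}=1$. The only difference is in the selection mechanics: the paper picks, in one pass, witnesses maximal in the bitile partial order (distinct tops are then incomparable, hence disjoint rectangles, so disjointness is immediate), whereas your iterative largest-$|I_{P'}|$ rule needs the two-case comparability argument you give --- which is valid, and the maximum over $|I_{P'}|$ you invoke does exist since the density average of any witness is at most $|E|/|I_{P'}|$, so admissible scales are bounded.
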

\begin{remark}\label{rem:L1andBMO}
The last estimate encodes different types of information. Letting $J$ increase to $\RR$, it shows that
\begin{equation*}
  \Big\|\sum_j \ind_{I_{T_j}}\Big\|_1=
  \sum_{j} |I_{T_j}| \leq \Delta^{-r'}|E|. 
\end{equation*}
On the other hand, it also shows that
\begin{equation*}
  \Big\|\sum_j \ind_{I_{T_j}}\Big\|_{BMO}
  \lesssim\sup_J\frac{1}{|J|}\sum_{j:I_{T_j}\subset J} |I_{T_j}|
  \leq \Delta^{-r'}
  \sup_J\frac{|E\cap J|}{|J|},
\end{equation*}
where the supremum is over all dyadic $J$ that contain at least one $I_{T_j}$.
\end{remark}
\begin{proof} We choose $\P_{\operatorname{sparse}}$ to be as big as possible
\begin{align*}
\P_{\operatorname{sparse}}\coloneqq\bigg\{P\in\P: \sup_{P'\geq P}\frac{1}{|I_{P'}|}\int_{I_{P'} } |g(x)|^{r'} \sum_{j:\ N_j(x)\in\omega_{P'}}|a_j(x)|^{r'}dx \leq 
  \Delta^{r'} \bigg\}.
\end{align*}
For every $P\in \P\setminus \P_{\operatorname{sparse}}$ there exists a bitile $P'$ such that
\begin{align*}
|I_{P'}|\leq \Delta^{-r'} \int_{I_{P'} } |g(x)|^{r'} \sum_{j:\ N_j(x)\in\omega_{P'}}|a_j(x)|^{r'}dx  .
\end{align*}
Among the chosen bitiles $P'$ let us call $\{T_k\}_k$ the bitiles that are maximal with respect to the partial order `$\leq$'. Now set
\begin{align*}
\T_k \coloneqq \{P\in\P\setminus\P_{\operatorname{sparse}}:P\leq T_k\}.
\end{align*}
It is clear that $\P\setminus \P_{\operatorname{sparse}}=\cup_k \T_k$. We have
\begin{align*}
\sum_{k: I_{T_k}\subset J} |I_{T_k}| 
   & \leq  \Delta^{-r'}
   \sum_{k: I_{T_k}\subset J} \sum_j \int_{I_{T_k}}|g(x)|^{r'} |a_j(x)|^{r'} \ind_{\{y:\ N_j(y)\in \omega_{T_k}\}}(x)dx  \\
  &\leq   \Delta^{-r'} 
  \sum_j \sum_{k: I_{T_k}\subset J} \int_{I_{T_k}\cap E\cap \{y:\ N_j(y)\in \omega_{T_k}\} }  |a_j(x)|^{r'} dx.
\end{align*}
Now, for $j$ fixed, the sets $I_{T_k}\cap E\cap \{y:\ N_j(y)\in \omega_{T_k}\}$, $k\in\Z$, are all contained in $E \cap J $ and are pairwise disjoint for different $k$'s. Indeed if two of them intersected, say for $k_1\neq k_2$, then the corresponding bitiles $T_{k_1},T_{k_2}$ would also intersect, which contradicts their maximality. Summing first in $k$ and then in $j$ we get
\begin{align} \label{e:BMOreturn}
\sum_{k:I_{T_k}\subset J} |I_{T_k}|& \leq \Delta^{-r'} 
  \sum_j \int_{E \cap J}|a_j(x)|^{r'}dx
\leq \Delta^{-r'} |E  \cap J |,
\end{align}
since $\sum_j |a_j(x)|^{r'}=1$.  
\end{proof}

For the selection by size we prove a version of the standard size selection algorithm:
\begin{lemma}[Size lemma]\label{l.size} Let $\P$ be a finite collection of bitiles and $X$ a Banach space of tile-type $q\geq 2$. Define $\size$ with respect to some function $f\in L^q(\RR; X)$. For a given $\varsigma>0$, there exists a disjoint decomposition
\begin{align*}
\P=\P_{\operatorname{small}}\cup \bigcup_j \T_j,
\end{align*} 
where each $\T_j$ is a tree,
\begin{equation*}
  \size(\P_{\operatorname{small}})\leq   \varsigma,
\end{equation*}
and for dyadic intervals $J$:
\begin{equation*}
 \sum_{j: I_{T_j}\subset J} |I_{T_j}| \lesssim \varsigma^{-q} \|f{ \ind_J}\|_{L^q(\RR;X)} ^q
 {\leq\varsigma^{-q}|F\cap J|\quad\text{if}\quad |f|\leq \ind_F.}
\end{equation*}
\end{lemma}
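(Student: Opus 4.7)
The plan is to mimic the greedy selection scheme used in the proof of Lemma \ref{l.density}, with two extra ingredients: (i) organize the selected tops so that the up-parts of the selected trees form a u-good collection in the sense of Section \ref{s.tiletype}, and (ii) invoke UMD (which holds by Proposition \ref{p.tiletype} since $X$ has tile-type $q$) to pass from a size witness to the corresponding full maximal up-tree, which is what the u-good formalism requires. Initialize $\P_0\coloneqq\P$. As long as $\size(\P_k)>\varsigma$, fix a witness up-tree $\T^{u,*}_{k+1}\subseteq\P_k$ with top $T_{k+1}$ such that
\[
\int\ABs{\sum_{P\in\T^{u,*}_{k+1}}\ip f,w_{P_d},w_{P_d}(x)}^{q}\,dx>\varsigma^{q}|I_{T_{k+1}}|,
\]
and choose among all such witnesses one for which $c(\omega_{T_{k+1}})$ is minimal. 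Let $\T_{k+1}\coloneqq\{P\in\P_k:P\leq T_{k+1}\}$ be the maximal tree in $\P_k$ with that top, set $\P_{k+1}\coloneqq\P_k\setminus\T_{k+1}$, and iterate. Since $\P$ is finite the procedure terminates, and the remainder $\P_{\mathrm{small}}$ satisfies $\size(\P_{\mathrm{small}})\leq\varsigma$ by construction.

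Writing $\T_j^u\coloneqq\{P\in\T_j:P\leq_u T_j\}$, the crucial structural claim is that $\{\T_j^u\}_j$ is a u-good collection with tops $\{T_j\}_j$. The centers $\{c(\omega_{T_j})\}_j$ are (weakly) increasing: for any $\ell<k+1$, the witness $\T^{u,*}_{k+1}$ is contained in $\P_k\subseteq\P_{\ell-1}$, so it would have been an admissible witness at step $\ell$, contradicting the minimality of $c(\omega_{T_\ell})$ if $c(\omega_{T_{k+1}})<c(\omega_{T_\ell})$. The identity
\[
\T_j^u=\{P\in\cup_\ell\T_\ell^u\,:\,P\leq_u T_j,\ P\nleq T_k\text{ for all }k<j\}
\]
follows directly from the definition of $\T_j$ as $\{P\in\P_{j-1}:P\leq T_j\}$. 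Applying the tile-type $q$ hypothesis to this u-good collection with $f\ind_J$ in place of $f$ gives
\[
\sum_j\NOrm\sum_{P\in\T_j^u}\ip f\ind_J,w_{P_d},w_{P_d}.L^q(\RR;X). ^q\lesssim\|f\ind_J\|_{L^q(\RR;X)}^q.
\]

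To combine this with a matching lower bound, I would show $\varsigma^q|I_{T_j}|\lesssim\|\sum_{P\in\T_j^u}\ip f,w_{P_d},w_{P_d}\|_{L^q(\RR;X)}^q$. The witness $\T^{u,*}_j\subseteq\T_j^u$ satisfies this with itself in place of $\T_j^u$ (and a strict inequality), so the issue is to compare the two $L^q$ norms along the inclusion $\T^{u,*}_j\subseteq\T_j^u$. By Lemma \ref{l.haar}, $\sum_{P\in\T_j^u}\ip f,w_{P_d},w_{P_d}$ equals the unimodular $w_{T_{j,u}}^\infty$ times a signed Haar series in the intervals $\{I_P:P\in\T_j^u\}$, and the analogous sum over $\T^{u,*}_j$ is a Haar subprojection of it; UMD then controls the smaller sum by the full sum in $L^q$, which is exactly what is needed. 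For $j$ with $I_{T_j}\subset J$ every $P\in\T_j^u$ has $I_P\subset I_{T_j}\subset J$, so $\ip f,w_{P_d},=\ip f\ind_J,w_{P_d},$, and combining the two displays yields
\[
\varsigma^q\sum_{j:\,I_{T_j}\subset J}|I_{T_j}|\lesssim\sum_j\NOrm\sum_{P\in\T_j^u}\ip f\ind_J,w_{P_d},w_{P_d}.L^q(\RR;X). ^q\lesssim\|f\ind_J\|_{L^q(\RR;X)}^q,
\]
as required; the bound $\|f\ind_J\|_{L^q}^q\leq|F\cap J|$ under $|f|\leq\ind_F$ is immediate. The main technical hurdle will be the comparison between the size witness and the maximal up-tree: this is the only step where UMD is essentially used, and it is unavoidable because the u-good formalism of Section \ref{s.tiletype} refers to maximal collections rather than to size witnesses themselves.
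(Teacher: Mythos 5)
Your proposal is correct, and it shares the paper's skeleton: greedily extract maximal trees, choosing tops with minimal frequency center so that the up-parts of the selected trees form a $u$-good collection by construction, and then apply the tile-type inequality with $f\ind_J$ in place of $f$; your verification of the $u$-good identity $\T_j^u=\{P\in\cup_\ell\T_\ell^u:\,P\leq_u T_j,\ P\nleq T_k\ \text{for all }k<j\}$ is exactly what the paper's definition of good collections was tailored for. The genuine difference is the selection quantity. The paper thresholds $\Delta(\T)^q=\frac1{|I_T|}\bigl\|\sum_{P\in\T_u}\ip f,w_{P_d},w_{P_d}\bigr\|_{L^q(\RR;X)}^q$, computed from the \emph{full up-part} of the candidate tree, so the counting bound $\varsigma^q|I_{T_j}|\leq\bigl\|\sum_{P\in\T_{j,u}}\ip f,w_{P_d},w_{P_d}\bigr\|_{L^q(\RR;X)}^q$ comes for free from the selection and no auxiliary inequality is needed at that point, while the smallness of the remainder is what is read off ``by definition''. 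You instead threshold $\size$ itself via a witness up-tree, which makes $\size(\P_{\operatorname{small}})\leq\varsigma$ automatic, but you must then transfer the witness bound to the full up-part $\T_j^u$; your mechanism for this---Lemma \ref{l.haar} rewrites both sums as Haar expansions of $f\,w_{T_{j,u}}^\infty$ over the intervals $\{I_P\}$, the witness sum being the Haar projection of the full sum onto a subfamily, so the UMD property (available by Proposition \ref{p.tiletype}) bounds it by the full sum in $L^q(\RR;X)$---is valid, and the extra UMD constant is harmless since the conclusion is only claimed with $\lesssim$. The one caveat, shared with the paper's own formulation, is that the ``minimal center'' is an infimum over an infinite family of admissible tops and need not be attained; this is repaired by the standard convention of restricting candidate tops to a finite family (e.g.\ $|I_T|$ no larger than a fixed dyadic interval containing all time intervals of the finite collection $\P$), which leaves your monotonicity argument for the centers untouched.
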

\begin{remark}
As in Remark~\ref{rem:L1andBMO}, the last estimate implies
\begin{equation*}
  \Big\|\sum_j \ind_{I_{T_j}}\Big\|_1\lesssim \varsigma^{-q}|F|,\qquad
  \Big\|\sum_j \ind_{I_{T_j}}\Big\|_{BMO}\lesssim \varsigma^{-q}\sup_J\frac{|F\cap J|}{|J|},
\end{equation*}
where the supremum is over all dyadic $J$ that contain at least one $I_{T_j}$.
\end{remark}
The selection algorithm is contained in \cite{HL}*{Proposition 6.1} but we briefly outline it here for the reader's convenience.
\begin{proof} For every tree $\T$ we set
	\begin{align*}
		\Delta(\T)^q \coloneqq \frac{1}{|I_T|} \int \Abs {\sum_{P\in\T_u}\ip f,w_{P_d},w_{P_d}(x) }^q dx,
	\end{align*}
where $T$ is a top of $\T$. We iterate the following selection algorithm. Consider all maximal trees inside $\P$  with $\Delta( \T)>\varsigma$. 
Among them let $ \T_1$ be one with top $ T_1$ whose frequency interval $\omega_{ T_1}$ has minimal center. Replace $\P$ by $\P\setminus\T _1$ and iterate. When no trees can be selected any longer the remaining collection, $\P_ {\operatorname{small} }$, by definition satisfies $\size(\P _ { \operatorname{small} })\leq \varsigma$. 
Let $\{ \T_j \}_j$ be the selected trees. The top time intervals ${I_{T_j}}$ of the selected trees can be thus estimated by
\begin{align*}
	\sum_{j: I_{T_j}\subset J} |I_{T_j}| \leq \frac{1}{\varsigma^q} \sum_{j: I_{T_j}\subset J} \NOrm \sum_{P\in \T_{j,u} }\ip f,w_{P_d},w_{P_d} .L^q(\RR;X)  .^q,
\end{align*}
and we can replace $f$ by $f\ind_J$, since $w_{P_d}$ is supported on $I_P\subset I_{T_j}\subset J$. The collection $\mathcal T\coloneqq \{\T_{j,u}\}_j$ of the selected up-trees is a \emph{good} collection by construction. Thus the tile-type property of $X$ implies that the sum on the right hand side of the previous estimate can be estimated by $\|f\ind_J\|_{L^q(\RR;X)} ^q$. We get
\begin{align*}
	\sum_{j:I_{T_j}\subset J} |I_{T_j}| \lesssim \varsigma^{-q} \|f\ind_J\|_{L^q(\RR;X)} ^q  
\end{align*}
as desired.
\end{proof}
Iterating the density and size lemmas we can write any finite collection as a union of trees.

\begin{lemma}\label{l.sensity}
Let $\P$ be a finite collection of bitiles, and define  $\operatorname{size}$ with respect to $f:\R_+\to X$ with $\abs{f}\leq \ind_F$ and  $\operatorname{density}$ with respect to $g:\R_+\to X^*$ with $\abs{g}\leq\ind_E$. Then $\P$ admits a decomposition
\begin{align*}
\P = \bigcup_{n\in\Z} \bigcup_j \T_{n,j} \cup \P_{\operatorname{residual}},
\end{align*}
such that
\begin{align*}
\density(\cup_j\T_{n,j}) \leq 2^\frac{n}{r'}|E|^{1/r'},\quad \size(\cup_j \T_{n,j}) \leq 2^\frac{n}{q}|F|^\frac{1}{q}, \quad \sum_{j}|I_{\T_{n,j}}|\lesssim 2^{-n}
\end{align*}
and $ \density( \P_{\operatorname{residual}})=\size(\P_{\operatorname{residual}})=0$.

The following bounds are available under additional assumptions:
\begin{enumerate}
  \item\label{it:smallF} If $\inf_{x\in I_P}M(\ind_F)(x)\leq|F|/|E|\leq 1$ for all $P\in\P$, then
\begin{equation*}
  \Big\|\sum_j \ind_{I_{\T_{n,j}}}\Big\|_{BMO}\lesssim 2^{-n}|E|^{-1}.
\end{equation*}
  \item\label{it:bigF} If $\inf_{x\in I_P}M(\ind_E)(x)\leq|E|/|F|\leq 1$ for all $P\in\P$, then
\begin{equation*}
  \Big\|\sum_j \ind_{I_{\T_{n,j}}}\Big\|_{BMO}\lesssim 2^{-n}|F|^{-1}.
\end{equation*}
\end{enumerate}
\end{lemma}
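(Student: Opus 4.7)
\emph{Proof plan.} The decomposition is produced by iterating the density and size lemmas along a dyadic sequence of thresholds. Since $\P$ is finite, choose $n_{0}\in\Z$ large enough that $\density(\P)\leq 2^{n_{0}/r'}|E|^{1/r'}$ and $\size(\P)\leq 2^{n_{0}/q}|F|^{1/q}$, and set $\P_{n_{0}+1}\coloneqq\P$. Proceeding by descending induction on $n$: given $\P_{n+1}$, apply Lemma~\ref{l.density} with $\Delta=2^{n/r'}|E|^{1/r'}$ to extract a family $\{\T'_{n,j}\}_j$ of trees, leaving behind a subcollection of density at most $\Delta$; then apply Lemma~\ref{l.size} to that subcollection with $\varsigma=2^{n/q}|F|^{1/q}$ to extract a further family $\{\T''_{n,j}\}_j$ and leave a residual subcollection $\P_{n}$ of size at most $\varsigma$ and, by monotonicity, density at most $\Delta$. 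The collection of trees at level $n$ is $\{\T_{n,j}\}_j\coloneqq\{\T'_{n,j}\}_j\cup\{\T''_{n,j}\}_j$.

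The required density and size bounds for $\bigcup_j\T_{n,j}$ follow from the monotonicity of $\density$ and $\size$ under inclusion, since $\bigcup_j\T_{n,j}\subseteq\P_{n+1}$, which already has $\density\leq 2^{(n+1)/r'}|E|^{1/r'}$ and $\size\leq 2^{(n+1)/q}|F|^{1/q}$; the factor of $2^{1/r'}$ or $2^{1/q}$ is harmless, absorbed into the implicit constants or removed by an index shift. The tree-count estimate is the sum of the two $L^{1}$-type bounds from the lemmas:
\begin{equation*}
\sum_{j}|I_{\T_{n,j}}|\leq\sum_{j}|I_{\T'_{n,j}}|+\sum_{j}|I_{\T''_{n,j}}|\lesssim\Delta^{-r'}|E|+\varsigma^{-q}|F|\lesssim 2^{-n}.
\end{equation*}
Since $\P$ is finite only finitely many levels contribute, and we take $\P_{\operatorname{residual}}$ to be the possibly empty collection that is left after the iteration terminates, which by construction has $\density=\size=0$.

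For the BMO bounds, Remark~\ref{rem:L1andBMO} and its analogue after Lemma~\ref{l.size} show that the BMO norm of the indicator sum of the extracted tree tops is controlled by $\Delta^{-r'}\sup_J|E\cap J|/|J|$ respectively $\varsigma^{-q}\sup_J|F\cap J|/|J|$, where the supremum is restricted to dyadic $J$ that contain at least one selected $I_{\T_{n,j}}$. In case~(1), any such $J$ contains some $I_{P}$ with $P\in\P$, and the hypothesis provides $x_{0}\in I_{P}\subseteq J$ with $|F\cap J|/|J|\leq M(\ind_{F})(x_{0})\leq|F|/|E|$; combined with the trivial $|E\cap J|/|J|\leq 1$ this yields
\begin{equation*}
\NOrm\sum_{j}\ind_{I_{\T_{n,j}}}.\bmo.\lesssim\Delta^{-r'}+\varsigma^{-q}\,|F|/|E|=2^{-n}|E|^{-1}.
\end{equation*}
Case~(2) is completely symmetric, swapping the roles of $E$ and $F$. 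The one step that is not pure bookkeeping is precisely this localization of the BMO supremum to intervals $J$ containing a selected $I_{\T_{n,j}}$; it is what converts the pointwise bound on the dyadic maximal function at a single point of some $I_{P}$ into the uniform bound on the averages actually appearing in the estimate.
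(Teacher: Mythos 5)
Your proposal is correct and follows essentially the same route as the paper: iterate the density and size lemmas at dyadic thresholds, obtain the $L^1$ bound on the tops from the two lemmas, and get the BMO bounds from Remark~\ref{rem:L1andBMO} and its analogue by localizing the supremum to dyadic $J$ containing a selected top (hence some $I_P$, $P\in\P$) and invoking the maximal-function hypothesis there; the only difference is your indexing convention, which shifts the thresholds by one dyadic step and is indeed harmless.
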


\begin{proof} Since $\P$ is a finite collection of bitiles there exists some positive integer $n_o$ such that
\begin{align*}
\density(\P')\leq 2^{n_o/r'}|E|^{ 1/r'}, \quad  \size(\P') \leq 2^{n_o/q} |F|^\frac{1}{q} 
\end{align*}
We apply density lemma with  $\Delta=2^{(n_0-1)/r'}|E|^{1/r'}$ to write
\begin{align*}
 \P' =\P' _1\cup \bigcup_j \T_j  
\end{align*}
with 
\begin{align*}
\density(\P' _1) \leq 2^{(n_o-1)/r'}|E|^{1/r'}\quad\text{and}\quad \sum_{j} |I_{T_j}|\leq    (2^\frac{n_o-1}{r'}|E|^{ 1/r'})^{-r'}|E|= 2\cdot 2^ {-n_o}.
\end{align*}
We also have that
\begin{equation*}
  \Big\|\sum_j \ind_{I_{T_j}}\Big\|_{BMO}
  \lesssim (2^\frac{n_o-1}{r'}|E|^{ 1/r'})^{-r'}\sup_J\frac{|E\cap J|}{|J|}\lesssim 2^{-n_o}|E|^{-1}\sup_J\inf_{x\in J}M(\ind_E)(x),
\end{equation*}
where the supremum is over $J$ that contain at least one $I_{T_j}$, therefore at least one $I_P$ with $P\in\P$. The maximal term is always bounded by $1$, and under assumption \eqref{it:bigF} also by $\abs{E}/\abs{F}$.

We reduce the size of the collection in a similar fashion. We apply the size lemma with $\varsigma=2^{(n_o-1)/q}|F|^\frac{1}{q}$  to $\P' _1$ to write
\begin{align*}
 \P' _1= \P'_{1,1}\cup \bigcup_j\tilde \T_j  
\end{align*}
with 
\begin{align*}
 \size(\P' _{1,1}) &\leq 2^{\frac{n_o-1}{q}}|F|^\frac{1}{q}\quad\text{and}\quad 
\\
  \sum_j |I_{\tilde T_j}| &\lesssim (2^\frac{n_o-1}{q}|F|^\frac{1}{q})^{-q}\|f\|_{L^q(\RR;X)}^q\lesssim 2^{-n_o}.
\end{align*}
Under the additional assumptions, we also have that
\begin{equation*}
  \Big\|\sum_j \ind_{I_{\tilde{T}_j}}\Big\|_{BMO}
  \lesssim (2^\frac{n_o-1}{q}|F|^\frac{1}{q})^{-q}\sup_J\frac{|F\cap J|}{|J|}
  \lesssim 2^{-n_0}|F|^{-1}\sup_J\inf_{x\in J}M(\ind_F)(x),
\end{equation*}
where the supremum is over all dyadic $J$ that contain at least one top $I_{\tilde{T}_j}$, hence at least one $I_P$, $P\in\P$. The maximal term is always bounded by $1$, and under the assumption \eqref{it:smallF} also by $\inf_{x\in J}M(\ind_F)(x)\leq\inf_{x\in I_P}M(\ind_F)(x)\lesssim |F|/|E|$.

Altogether, we find that $\{\T_{n_o,j}\}_j:=\{\T_j\}_j\cup\{\tilde{\T}_j\}_j$ satisfies
\begin{equation*}
  \sum_j|I_{\T_{n_o,j}}|\lesssim 2^{-n_o}
\end{equation*}
and
\begin{equation*}
  \Big\|\sum_j \ind_{I_{\T_{n_o,j}}}\Big\|_{BMO}
  \lesssim   \begin{cases}
    2^{-n_0}(|E|^{-1}+|F|^{-1}\cdot|F|/|E|)  \lesssim 2^{-n_o}|E|^{-1}, & \textup{under assumption \eqref{it:smallF}}, \\
    2^{-n_0}(|E|^{-1}\cdot|E|/|F|+|F|^{-1}) \lesssim 2^{-n_o}|F|^{-1}, & \textup{under assumption \eqref{it:bigF}}. \end{cases}
\end{equation*}
Since the density and size of any subcollection of $\P'$ cannot increase thus we also have
\begin{align*}
 \density(\cup_j \T_{n_o,j})\leq 2^\frac{n_o}{r'}|E|^{ 1/r'} \quad\text{and}\quad\size(\cup_j \T_{n_o,j})\leq 2^\frac{n_o}{q} |F|^\frac{1}{q}.
\end{align*}
We iterate this procedure until the residual collection has density and size equal to $0$.
\end{proof}
\begin{remark} In what follows the collection $ \P_{\operatorname{residual}}$ will be ignored. In fact, every $P\in \P_{\operatorname{residual}}$ can be considered as a tree with a single bitile, and this tree will have both size and density equal to $0$. By the tree lemma these trivial trees do not contribute anything to the estimate for $C_\P$.
\end{remark}
We record some additional size estimates that will allow us to obtain some initial control on the size of the collections we will consider.

\begin{lemma}\label{l.trivialsize} Let $\P$ be any collection of bitiles and define the density with respect to some function $g:\RR\to X^*$ with $|g|\leq \ind_E$ and the size with respect to some function $f:\RR\to X$ with $|f|\leq \ind_{F}$. We have that $\density(\P)\leq 1$ and $ \size(\P)\lesssim 1$.

The following bounds are available under additional assumptions:
\begin{enumerate}
  \item\label{it2:smallF} If $\inf_{x\in I_P}M(\ind_F)(x)\lesssim|F|/|E|\leq 1$ for all $P\in\P$, then
\begin{equation*}
  \size(\P) \lesssim\frac{|F|}{|E|}.
\end{equation*}
  \item\label{it2:bigF} If $\inf_{x\in I_P}M(\ind_E)(x)\lesssim|E|/|F|\leq 1$ for all $P\in\P$, then
\begin{equation*}
  \density(\P) \leq \Big(\frac{|E|}{|F|}\Big)^{1/r'}.
\end{equation*}
\end{enumerate}
\end{lemma}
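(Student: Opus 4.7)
The plan is to handle the four claims using only the definitions of $\size$ and $\density$, the tile-type hypothesis applied to a single up-tree, and pointwise bounds coming from the dyadic maximal operator. The density bounds are purely combinatorial, relying on $|g|\leq\ind_E$ and the normalization $\sum_j|a_j|^{r'}=1$, while the size bounds invoke the tile-type inequality.

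The unconditional bound $\density(\P)\leq 1$ is immediate from the definition: the integrand is pointwise at most $|g(x)|^{r'}\cdot\sum_j|a_j(x)|^{r'}\leq 1$, so the normalized integral over $I_{P'}$ is at most $1$. For the sharper bound under assumption (2), the same estimate yields $\density(\P)^{r'}\leq\sup_{P\in\P,\,P'\geq P}|E\cap I_{P'}|/|I_{P'}|$; since $I_{P'}\supseteq I_P$, for any $x\in I_P$ we have $|E\cap I_{P'}|/|I_{P'}|\leq M(\ind_E)(x)$. Taking the infimum over $x\in I_P$ and invoking the hypothesis on $M(\ind_E)$ gives $\density(\P)^{r'}\lesssim|E|/|F|$, as claimed.

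For the size estimates I would use that any single up-tree $\T\subseteq\P$ is trivially a good collection in the sense of Section~\ref{s.tiletype}. Replacing $f$ by $f\ind_{I_T}$ (permissible since $\ip f,w_{P_d},=\ip f\ind_{I_T},w_{P_d},$ for every $P\in\T$), the tile-type inequality gives
\begin{equation*}
\int\ABs{\sum_{P\in\T}\ip f,w_{P_d},w_{P_d}(x)}^{q}\,dx \lesssim \|f\ind_{I_T}\|_{L^q(\RR;X)}^{q} \leq |F\cap I_T|\leq|I_T|.
\end{equation*}
Dividing by $|I_T|$ and taking the supremum over trees $\T\subseteq\P$ gives $\size(\P)^q\leq\sup_{\T}|F\cap I_T|/|I_T|\leq 1$, i.e., $\size(\P)\lesssim 1$. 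Under assumption (1), the same chain of inequalities combined with the pointwise bound $|F\cap I_T|/|I_T|\leq M(\ind_F)(x)$ for any $x\in I_P\subseteq I_T$ yields $|F\cap I_T|/|I_T|\lesssim|F|/|E|$, hence the corresponding bound on $\size(\P)^q$.

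The one point I expect to need extra care is matching the precise form of the stated size bound under (1): the argument above naturally delivers $\size(\P)^q\lesssim|F|/|E|$, equivalent to $\size(\P)\lesssim(|F|/|E|)^{1/q}$, whereas the lemma asserts the sharper $\size(\P)\lesssim|F|/|E|$ (which is indeed stronger here because $|F|/|E|\leq 1$ and $q\geq 2$). Recovering the sharper estimate likely requires interpolating the tile-type inequality with a more refined $L^\infty$-type control of the tree sum $\sum_{P\in\T}\ip f,w_{P_d},w_{P_d}$ coming from Lemma~\ref{l.haar}, or a localisation argument exploiting that $f$ itself is an indicator. Apart from this exponent issue the strategy is routine and follows the pattern of the analogous scalar-valued statements.
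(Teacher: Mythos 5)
Your treatment of three of the four claims is sound: the bound $\density(\P)\leq 1$ and the density bound under assumption (2) are exactly the paper's computations, and for $\size(\P)\lesssim 1$ your application of the tile-type inequality to a single up-tree (a trivially good collection, with $f$ replaced by $f\ind_{I_T}$) is a legitimate alternative to the paper's route, which instead cites the UMD-based estimate of \cite{HL}*{Lemma 7.1}. The genuine gap is item (1), and you have diagnosed it yourself: your argument only yields $\size(\P)\lesssim(|F|/|E|)^{1/q}$, whereas the lemma asserts $\size(\P)\lesssim|F|/|E|$, which is strictly stronger since $|F|/|E|\leq1$ and $q\geq2$. This is not a cosmetic loss: in Case 1 of the proof of Proposition \ref{p.main} (where $|E|\eqsim1$, $|F|\leq1$) the bound $\size(\P')\lesssim|F|$ is what makes the sum over $2^n\leq|F|^{q/q'}$ close to $|F|^{1-(q-1)/r}\lesssim|F|^{1/p}$; with only $\size(\P')\lesssim|F|^{1/q}$ one ends up with a power of $|F|$ that is too small to dominate $|F|^{1/p}$ in the stated range of exponents (already for $q=2$ and $p$ close to $1$). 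So a proof of the lemma must actually deliver the linear-in-$|F|/|E|$ bound, and your proposal does not.

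The missing ingredient in the paper is not an interpolation of the tile-type inequality with an $L^\infty$ bound but the stopping-time/BMO estimate of Lemma \ref{l.bmosize} (taken from \cite{HL}*{Lemma 7.2}), combined with Lemma \ref{l.haar}; the paper points to \cite{HL}*{Lemma 7.3} for the details. Concretely: for an up-tree $\T\subseteq\P$ with top $T$, Lemma \ref{l.haar} rewrites $\sum_{P\in\T}\langle f,w_{P_d}\rangle w_{P_d}$ as $w_{T_u}^\infty\sum_{P\in\T}\langle f w_{T_u}^\infty,h_{I_P}\rangle h_{I_P}$, a Haar sum over intervals $I_P\subseteq I_T$. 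Since $|f w_{T_u}^\infty|=|f|\leq\ind_F$, one has $M(fw_{T_u}^\infty)\leq M(\ind_F)$ pointwise, so the hypothesis of item (1) places every $I_P$, $P\in\P$, in the collection $\{I\in\mathcal D:\inf_{y\in I}M(fw_{T_u}^\infty)(y)\lesssim|F|/|E|\}$. Applying Lemma \ref{l.bmosize} with this function, $\lambda\eqsim|F|/|E|$, $K=I_T$ and $p=q$ gives $\bigl\lVert\sum_{P\in\T}\langle fw_{T_u}^\infty,h_{I_P}\rangle h_{I_P}\bigr\rVert_{L^q(\RR;X)}\lesssim(|F|/|E|)\,|I_T|^{1/q}$, and dividing by $|I_T|^{1/q}$ and taking the supremum over up-trees yields $\size(\P)\lesssim|F|/|E|$ with the bound linear in the maximal-function level, i.e.\ without the loss of the power $1/q$. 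Note that this step bypasses the tile-type inequality entirely; the $L^q$ average of the tree sum is controlled by the stopping condition on $M(\ind_F)$, which is precisely the ``localisation exploiting that $f$ is (dominated by) an indicator'' that you guessed might be needed but did not carry out.
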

\begin{proof}
The proof of the density estimate by $1$ is completely trivial while the proof of the size estimate by $1$ is in \cite{HL}*{Lemma 7.1} and relies on the UMD property of $X$. 

Under the assumption \eqref{it2:bigF}, the density satisfies
\begin{equation*}
  \density(\P)\leq\sup_{P\in\P}\sup_{P'\geq P}\Big(\frac{1}{|I_{P'}|}|I_{P'}\cap E|\Big)^{1/r'}
  \leq \sup_{P\in\P}\inf_{x\in I_P}M(\ind_E)(x)^{1/r'}\leq\Big(\frac{|E|}{|F|}\Big)^{1/r'}.
\end{equation*}
Under the assumption \eqref{it2:smallF}, a standard argument using Lemmas \ref{l.haar} and \ref{l.bmosize} (below) give the size bound asserted in this case; for the details of this argument see for example the proof of \cite{HL}*{Lemma 7.3}. 
\end{proof}

The following lemma, which we referred to above, is a $\bmo$-type of estimate and is contained in \cite{HL}*{Lemma 7.2}.

\begin{lemma}\label{l.bmosize} Let $\mathcal J\subseteq \{I\in\mathcal D:\inf_{y\in I} Mf(x)\leq \lambda\}$ be a finite collection of dyadic intervals and $K$ be a dyadic interval. Then
	\begin{align*}
		\NOrm \sum_{\substack{I\in\mathcal J\\I\subseteq K}}\ip f,h_I,h_I.L^P(\RR;X). \leq \lambda |K|^\frac{1}{p},
	\end{align*}
for all $1\leq p<\infty.$
\end{lemma}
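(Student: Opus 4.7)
The idea is to reduce the estimate to the UMD boundedness of Haar multipliers on $L^p(\RR;X)$, applied to a suitable Calder\'on--Zygmund truncation of $f$. First, I would form the bad set $F\coloneqq \{x\in\RR:\ Mf(x)>\lambda\}$ and write it as the disjoint union of its maximal dyadic components $\{L_k\}_k$. By the maximality of each $L_k$, its dyadic parent $L_k^{(1)}$ contains a point where $Mf\leq\lambda$, hence $\langle |f|\rangle_{L_k^{(1)}}\leq\lambda$, and therefore $\langle |f|\rangle_{L_k}\leq 2\lambda$. Then I would introduce the auxiliary function
\begin{align*}
\tilde f\coloneqq f\,\ind_{\RR\setminus F}+\sum_{k} \langle f\rangle_{L_k}\,\ind_{L_k},
\end{align*}
which, by Lebesgue differentiation on $\RR\setminus F$ and the previous estimate on each $L_k$, satisfies $|\tilde f(x)|\leq 2\lambda$ for almost every $x\in\RR$. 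In particular $\|\tilde f\,\ind_K\|_{L^p(\RR;X)}\leq 2\lambda|K|^{1/p}$.

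The key step is the coefficient identity $\ip f,h_I,=\ip \tilde f,h_I,$ for every $I\in\mathcal J$ with $I\subseteq K$. The hypothesis $\inf_{y\in I}Mf(y)\leq\lambda$ forces $I\not\subseteq L_k$ for any $k$; combined with dyadic nesting this means that for each $k$ either $I\cap L_k=\emptyset$ or $L_k\subseteq I$. In either case $h_I$ is constant on $L_k$, and hence
\begin{align*}
\int_{L_k}f\,h_I=h_I\big|_{L_k}\int_{L_k}f=h_I\big|_{L_k}\int_{L_k}\tilde f=\int_{L_k}\tilde f\,h_I,
\end{align*}
while $f=\tilde f$ pointwise on $\RR\setminus F$. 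Summing over $k$ and adding the contribution from $\RR\setminus F$ yields the desired equality of Haar coefficients.

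Finally, each $h_I$ appearing in the sum is supported in $I\subseteq K$, so $\ip \tilde f,h_I,=\ip \tilde f\,\ind_K,h_I,$, and the target expression is a Haar multiplier with symbol in $\{0,1\}$ applied to $\tilde f\,\ind_K$. Writing such a $\{0,1\}$-multiplier as a convex combination of two $\{-1,+1\}$-multipliers and invoking the UMD property of $X$ (guaranteed by finite tile-type via Proposition~\ref{p.tiletype}), I obtain
\begin{align*}
\NOrm \sum_{\substack{I\in\mathcal J \\ I\subseteq K}}\ip f,h_I, h_I .L^p(\RR;X). \lesssim \|\tilde f\,\ind_K\|_{L^p(\RR;X)}\lesssim \lambda|K|^{1/p}.
\end{align*}
The main technical point is the Haar coefficient identity above; once it is established, the rest of the argument reduces to the standard UMD boundedness of $\{-1,+1\}$-Haar multipliers on $L^p(\RR;X)$.
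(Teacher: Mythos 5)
Neither the paper nor its immediate context proves this lemma at all -- it is quoted from \cite{HL}*{Lemma 7.2} -- and your argument is exactly the standard proof of that result: a stopping-time (Calder\'on--Zygmund) truncation of $f$ on the maximal dyadic components $L_k$ of $\{Mf>\lambda\}$, the observation that $\ip f,h_I,=\ip \tilde f,h_I,$ for every $I\in\mathcal J$ because no such $I$ sits inside an $L_k$, and the UMD boundedness of $\{0,1\}$-valued Haar multipliers. Two small points should be patched. First, with the paper's definition $Mf=\sup_k|\Exp_k f|$ (averages of $f$ itself, not of $|f|$), the step ``$\langle |f|\rangle_{L_k^{(1)}}\le\lambda$, hence $\langle|f|\rangle_{L_k}\le 2\lambda$'' is not justified: a point $y\in L_k^{(1)}$ with $Mf(y)\le\lambda$ only gives $|\langle f\rangle_J|\le\lambda$ for dyadic $J\ni y$. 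This still suffices, because the witness $y$ must lie in the dyadic sibling $L_k'$ of $L_k$ (otherwise $y\in L_k\subseteq\{Mf>\lambda\}$), so $|\langle f\rangle_{L_k'}|\le\lambda$ and $|\langle f\rangle_{L_k^{(1)}}|\le\lambda$, whence $|\langle f\rangle_{L_k}|=|2\langle f\rangle_{L_k^{(1)}}-\langle f\rangle_{L_k'}|\le 3\lambda$, which is all you actually use; the bound $|f|\le\lambda$ a.e.\ off the bad set is fine via a.e.\ martingale convergence, as you say. Second, the UMD Haar-multiplier bound is available only for $1<p<\infty$, whereas the statement includes $p=1$; since your output function is supported in $K$, the case $p=1$ follows from the case $p=2$ by H\"older's inequality, so add that one line. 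Finally, your argument (like the one in \cite{HL}) produces an implicit constant depending on the UMD constant of $X$, i.e.\ $\lesssim\lambda|K|^{1/p}$ rather than the literal constant $1$ displayed in the statement; this is what is used later (e.g.\ in Lemma \ref{l.trivialsize}), so it is harmless, but you should state the conclusion with $\lesssim$.
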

\section{Proof of Proposition \ref{p.main}} \label{s.summingup}
We need to prove that for every pair of measurable sets $E,F$ with $|E|,|F|<+\infty$ there exists subsets $E'\subset E$, 
 and $ F'\subset F$ such that  either $ E'=E$ or $ F'=F$, and 
\begin{align*}
\abs {\ip C_\P f , g, }\lesssim |F|^\frac{1}{p} |E|^\frac{1}{p'}
\end{align*}
for $\max(q,p'(q-1))<r<\infty$ and for all $|f|\leq\ind_{F'}$, $|g|\leq \ind_{E'}$. By dilation invariance we can assume that $1<|E| \leq 2$.

\subsubsection*{Case 1: $|F|\leq 1$.}
 Let 
\begin{align*}
G \coloneqq \{M(\ind_F)>4|F|\}.
\end{align*}
Then $|G|\leq \frac{1}{4}$ by the weak $(1,1)$ bound of $M$ and thus $E' \coloneqq E\setminus G$ can be taken as a major subset of $E$. We have
\begin{align*}
\ip C_\P f , g, &=\sum_{P\in\P} \epsilon_P \ip f ,w_{P_d}, \ip w_{P_d}  a_{P}, g, =\sum_{\substack{P\in\P\\I_P\nsubseteq G}} +\sum_{\substack{P\in\P\\I_P\subseteq G}} .
\end{align*}
The second sum vanishes since each $w_{P_d}$ is supported on $I_P\subseteq G$ while $g$ is supported on $E'\subseteq G^c$. Hence it suffices to consider any collection $\P' \subseteq \{P\in \P: I_P\nsubseteq G\}$ and prove the corresponding bound for $C_{\P'}$ in the place of $C_\P$. Observe that for all $P\in\P'$ we have that $I_P\nsubseteq G$ so that $\inf_{x\in I_P}M(\ind_F)(x)\leq \inf_{x\in I_P\setminus G}M(\ind_F)(x)\leq 4|F|$ by the construction of $G$. Thus we are now in the situation that $\inf_{x\in I_P}M(\ind_F)(x)\lesssim |F|\eqsim|F|/|E|$ for all $P\in\P'$, where several useful bounds were obtained in the previous section.

We now apply the decomposition given by Lemma~\ref{l.sensity} to the collection $\P'$. Recalling the bounds $\density(\P')\leq 1$ and $\size(\P')\lesssim|F|$ from Lemma \ref{l.trivialsize}, and the estimates of Lemma~\ref{l.sensity}, we can write
\begin{align*}
\density(\T_{n,j}) \lesssim \min(1,2^\frac{n}{r'}),\quad  \size(\T_{n,j}) \leq \min(|F| ,2^\frac{n}{q} |F|^\frac{1}{q}), \quad \sum_j |I_{T_{n,j}}|\lesssim 2^{-n}.
\end{align*}
Applying the Lemma \ref{l.tree} with $s=1$ we get
\begin{align*}
\abs {\ip C_\P f , g, }&=\ABs{ \int \sum_{P\in \P} \epsilon_P \ip f,w_{P_d},w_{P_d}(x)a_P(x)g(x)dx }
\\
&\leq \sum_{n\in\Z}  \sum_j \int \ABs{ \sum_{P\in \T_{n,j}}\epsilon_P \ip f,w_{P_d},w_{P_d}(x)a_P(x)g(x)dx }
\\
&\lesssim \sum_{n\in\Z}  \sum_j |I_{T_{n,j}}| \size(\T_{n,j}) \density(\T_{n,j}) 
\\
&\lesssim \sum_{n\in\Z} \min( |F| ,2^\frac{n}{q} |F|^\frac{1}{q}) \min (1, 2^\frac{n}{r'} ) \sum_j |I_{T_{n,j}}|
\\
&\lesssim \sum_{n\in\Z} 2^{-n} \min(|F| ,2^\frac{n}{q} |F|^\frac{1}{q}) \min (1, 2^\frac{n}{r'} ).
\end{align*}
We estimate the previous sum as follows:
\begin{align*}
\abs {\ip C_\P f , g, }&\lesssim |F|^\frac{1}{q} \sum_{n:\ 2^n\leq |F|^\frac{q}{q'}  }2^{n(\frac{1}{q}-\frac{1}{r})} + |F|\sum_{n:\  |F|^\frac{q}{q'} < 2^n \leq 1  } 2^{n(\frac{1}{r'}-1)}+ |F| \sum_{n:1< 2^n }2^{-n}
\\
&\lesssim |F|^{1-\frac{q-1}{r}}+ |F|\lesssim |F|^\frac{1}{p} .
\end{align*}
In the last approximate inequality we used $1-\frac{q-1}{r}= \frac1p+\frac{1}{p'}-\frac{q-1}{r}>\frac1p $ and  the hypothesis $|F|\leq 1$.

\subsubsection*{Case 2: $|F|\geq 1$.}  
Let $ G \coloneqq \{M \mathbf 1_{E} > 8 \lvert  F\rvert ^{-1}  \}$, and set $ F' \coloneqq F \setminus G$. By the maximal theorem we can conclude that $ F'$ is a major subset of $ F$.  
Hence it suffices to consider any finite collection of bitiles $\P'\subset \{P\in \P: I_P\not\subset G\}$ and prove the corresponding bound for $C_{\P'}$. Thus the collection $\P'$ satisfies the property that $\inf_{x\in I_{P}}M(\ind_E)\lesssim|E|/|F|\eqsim|F|^{-1}$ for all $P\in\P'$, and several estimates from the previous section become available.

We again apply the decomposition given by Lemma~\ref{l.sensity} to the collection $\P'$.
Lemma~\ref{l.trivialsize} now provides the bounds $\density(\P')\lesssim|F|^{-1/r'}$ and $\size(\P')\lesssim 1$; combined with the estimates of Lemma~\ref{l.sensity}, this leads to
\begin{align*}
  \density(\mathcal T_n) \lesssim \min(|F|^{-1/r'},2^\frac{n}{r'}),\quad  \size(\mathcal T_n) \leq \min(1 ,2^\frac{n}{q}),\quad\mathcal T_n:=\cup_j \T_{n,j},
\end{align*}
and
\begin{equation}\label{e.l1bmo}
  \Big\|\sum_j\ind_{I_{\T_{n,j}}}\Big\|_1=\sum_j|I_{\T_{n,j}}|\lesssim 2^{-n},\quad
  \Big\|\sum_j\ind_{I_{\T_{n,j}}}\Big\|_{BMO}\lesssim 2^{-n}|F|^{-1}.
\end{equation}
Interpolation between the last two estimates gives the further bounds
\begin{equation}\label{e.tops}
  \Big\|\sum_j\ind_{I_{\T_{n,j}}}\Big\|_{\tau}\lesssim 2^{-n}|F|^{-1/\tau'},\quad\forall \tau\in[1,\infty).
\end{equation}

For $n $ fixed we estimate
\begin{align*}
 \Abs{\ip C_{\mathcal{T}_{n }}f,g,}&\leq \sum_j \int\ind_{I_{T_{n,j }}}(x) \Abs{ \sum_{P\in\T_{n,j}} \epsilon_P \ip f,w_{P_d},w_{P_d}(x)g(x)a_P(x)}dx
\\
&\leq \int  \bigg(\sum_j \ind_{I_{T_{n,j}}} (x)\bigg)^\frac{1}{r} \bigg(\sum_j \Abs{ \sum_{P\in\T_{n,j} } \epsilon_P \ip f,w_{P_d},w_{P_d}(x)g(x)a_P(x)}^{r'}\bigg)^\frac{1}{r'}dx
\\
&\leq \NORm  \bigg(\sum_j \ind_{I_{T_{n,j}}} \bigg)^\frac{1}{r} . L^\tau.  \NORm \bigg (\sum_j \Abs{ \sum_{P\in\T_{n,j}} \epsilon_P \ip f,w_{P_d},w_{P_d} g a_P}^{r'}\bigg)^\frac{1}{r'}.L^{\tau'}. \eqqcolon A\cdot B,
\end{align*}
for $\tau\geq r$, which we will eventually choose ``large enough''. We note that the function inside the norm in $B$ is supported in $E$ and $\tau' <r'$. Thus, H\"older's inequality and Lemma \ref{l.tree} imply
\begin{align*}
B& \leq |E|^{\frac{1}{\tau'}-\frac{1}{r'}} \bigg(\sum_j  \NORm \sum_{P\in\T_{n,j}}  \epsilon_P \ip f,w_{P_d},w_{P_d}g a_P.L^{r'}.  ^{r'} \bigg)^\frac{1}{r'}
\\
&\lesssim \size(\mathcal T_n)\density (\mathcal T_n) \bigg(\sum_j |I_{T_{n,j}}|\bigg)^\frac{1}{r'}
\\
&\lesssim  \min( 2^{n/q}|F|^{1/q},1)\min(2^{n/r'},|F|^{-1/r'}) 2^{-n/r'}
  =\min( 2^{n/q}|F|^{1/q}, 2^{-n/r'}|F|^{-1/r'})
\end{align*}
Using \eqref{e.tops}  with $\tau$ replaced by $\tau/r>1$  we estimate $A$ as follows
\begin{align*}
A= \NORm  \bigg(\sum_j \ind_{I_{T_{n,j}}}\bigg)^\frac{1}{r} . L^\tau.  = \NORm  \sum_j \ind_{I_{T_{n,j}}} . L^{\tau/r}. ^\frac{1}{r}  \lesssim
  2^{-n/r}|F|^{1/\tau-1/r}.
\end{align*}
Gathering the estimates we get for all $\tau>r$ that
\begin{align*}
 \Abs{\ip C_{\mathcal{T}_n}f,g,}&\lesssim  2^{-n/r}|F|^{1/\tau-1/r}\min( 2^{n/q}|F|^{1/q}, 2^{-n/r'}|F|^{-1/r'}).
\end{align*}
Summing in  $n\in\Z$ thus gives
\begin{equation*}
  \sum_{n\in\Z} \Abs{\ip C_{\mathcal{T}_n}f,g,}
  \lesssim\sum_{2^n\leq|F|^{-1}}2^{n(1/q-1/r)}|F|^{1/\tau-1/r+1/q}+\sum_{2^n>|F|^{-1}}2^{-n}|F|^{1/\tau-1}
  \lesssim|F|^{1/\tau}\leq|F|^{1/p}
\end{equation*}
by taking $\tau\geq \max(p,r)$, and recalling that $|F|\geq 1$.

This concludes the proof of Proposition \ref{p.main} and thereby of Theorems \ref{t.secondary} and \ref{t.main_new}.

\begin{remark} It is of some importance to note a subtle difference between the way we derive estimates~\eqref{e.l1bmo} in the present paper and the way such estimates have been derived in the scalar case in the literature. For example, in \cite{OSTTW} and \cites{DOLA,DOLA1}, only the tops of trees produced by the size lemma are shown to satisfy the BMO estimate in \eqref{e.l1bmo}. To deal with this problem the standard approach is to collect all the trees produced by the size and density lemmas and ``pass them through'' the size lemma once more. This double application of the size lemma guarantees the BMO estimate. One however needs to argue that after the second application of the size lemma, the $L^1$-estimate persists. This is done by complementing the size lemma with a certain efficiency estimate, stating that if a collection is already a union of trees, $\P=\cup_j \T_j$, and the size lemma decomposes $\P$ into a union of some other trees $\{\tilde \T_k\}_k$, then $\sum_k |I_{\tilde T_k}|\lesssim \sum_j |I_{T_j}|$; that is, the size lemma is shown to be the most efficient algorithm in selecting trees and their tops when decomposing $\P$. In the vector-valued case we haven't managed to produce such an efficiency estimate for the size lemma, which is strongly dependent on the tile-type of the Banach space $X$. Instead, we directly produce BMO estimates, both for trees selected by size, as well as for the ones selected by density.
\end{remark}

\section{Interpolation}\label{s.inter}
In this section we discuss the proof of Theorem \ref{t.main2}. This is a simple interpolation argument between the Hilbert space-valued case for the $r$-variation and the UMD valued case for the $\infty$-variation, that is the main result from \cite{HL}.

\begin{proof}[Proof of Theorem \ref{t.main2}]
Let $X$ be a UMD Banach space of the form $X=[Y,H]_\theta$ for some $0<\theta<1$ and set $q\eqqcolon 2/\theta$. From \cite{HL} we have that the Carleson operator maps $L^p([0,1);X)$ into itself for all $1<p<\infty$. This can be rewritten in the form
\begin{align*}
\norm S_N f. L^p([0,1);\ell^\infty(X)). \lesssim \|f\|_{L^p ([0,1);X)},\quad 1<p<\infty.
\end{align*}
On the other hand we have for any Hilbert space $H$ the following variation norm Carleson theorem:
\begin{align*}
\norm S_N f. L^p([0,1);\mathcal V^s(H)). \lesssim \|f\|_{L^p ([0,1);H)},\quad s>2,\quad  s'<p<\infty.
\end{align*}
This follows for example from Theorem \ref{t.main1} although in the special case of a Hilbert space one could just repeat the scalar proof. We first use the reiteration theorem 
as in \cite{BL}*{Theorem 3.5.3}, to write
\begin{align*}
X=[Y,H]_\theta=[ [ Y,H]_{\delta} ,H  ]_\omega,
\end{align*}
with $\theta=(1-\omega)\delta+\omega$. In our considerations one should think of $\delta\to 0$. Now fix $1<p<\infty$ and interpolate between $L^p([0,1);\ell^\infty([Y,H]_\delta))$ and $L^p([0,1);\mathcal V^s(X))$. As in \cite{PX}*{p. 501}, we have
\begin{align*}
[\ell^\infty([Y,H]_\delta) ,\mathcal V^s(H)]_\omega \subset \mathcal V^{s_\omega}([[Y,H]_\delta,H]_\omega)=\mathcal V^{s_\omega}(X),
\end{align*}
where $\frac{1}{s_\omega}= \frac{1-\omega}{\infty}+\frac{\omega}{s}\Leftrightarrow s_\omega=s/\omega,$ and $\theta=(1-\omega)\delta+\omega$.
From this we get that
\begin{align*}
 \norm S_N f.L^p([0,1);\vr(X)). \lesssim \norm f.L^p([0,1);X).
\end{align*}
whenever $r>2/\theta=q$ and $r>p'q/2$.
\end{proof}

\section*{Acknowledgment} We would like to thank the anonymous referee for an expert reading that resulted to several improvements throughout the paper.

 \begin{bibsection}
 \begin{biblist}

\bib{BL}{book}{
      author={Bergh, J{\"o}ran},
      author={L{\"o}fstr{\"o}m, J{\"o}rgen},
       title={Interpolation spaces. {A}n introduction},
   publisher={Springer-Verlag},
     address={Berlin},
        date={1976},
        note={Grundlehren der Mathematischen Wissenschaften, No. 223},
      review={\MR{0482275 (58 \#2349)}},
}

\bib{Billard}{article}{
      author={Billard, P.},
       title={Sur la convergence presque partout des s\'eries de
  {F}ourier-{W}alsh des fonctions de l'espace {$L^{2}\,(0,\,1)$}},
        date={1966/1967},
        ISSN={0039-3223},
     journal={Studia Math.},
      volume={28},
       pages={363\ndash 388},
      review={\MR{0217510 (36 \#599)}},
}

\bib{CJRW}{article}{
      author={Campbell, James~T.},
      author={Jones, Roger~L.},
      author={Reinhold, Karin},
      author={Wierdl, M{\'a}t{\'e}},
       title={Oscillation and variation for the {H}ilbert transform},
        date={2000},
        ISSN={0012-7094},
     journal={Duke Math. J.},
      volume={105},
      number={1},
       pages={59\ndash 83},
         url={http://dx.doi.org/10.1215/S0012-7094-00-10513-3},
      review={\MR{1788042 (2001h:42021)}},
}

\bib{CJRW1}{article}{
      author={Campbell, James~T.},
      author={Jones, Roger~L.},
      author={Reinhold, Karin},
      author={Wierdl, M{\'a}t{\'e}},
       title={Oscillation and variation for singular integrals in higher
  dimensions},
        date={2003},
        ISSN={0002-9947},
     journal={Trans. Amer. Math. Soc.},
      volume={355},
      number={5},
       pages={2115\ndash 2137 (electronic)},
         url={http://dx.doi.org/10.1090/S0002-9947-02-03189-6},
      review={\MR{1953540 (2003j:44007)}},
}

\bib{Car}{article}{
      author={Carleson, Lennart},
       title={On convergence and growth of partial sums of {F}ourier series},
        date={1966},
        ISSN={0001-5962},
     journal={Acta Math.},
      volume={116},
       pages={135\ndash 157},
      review={\MR{0199631 (33 \#7774)}},
}

\bib{MR1952927}{article}{
      author={Christ, M.},
      author={Kiselev, A.},
       title={Scattering and wave operators for one-dimensional {S}chr\"odinger
  operators with slowly decaying nonsmooth potentials},
        date={2002},
        ISSN={1016-443X},
     journal={Geom. Funct. Anal.},
      volume={12},
      number={6},
       pages={1174\ndash 1234},
         url={http://dx.doi.org/10.1007/s00039-002-1174-9},
      review={\MR{1952927 (2003m:47019)}},
}

\bib{MR2417419}{article}{
      author={Demeter, Ciprian},
       title={Pointwise convergence of the ergodic bilinear {H}ilbert
  transform},
        date={2007},
        ISSN={0019-2082},
     journal={Illinois J. Math.},
      volume={51},
      number={4},
       pages={1123\ndash 1158},
         url={http://projecteuclid.org/getRecord?id=euclid.ijm/1258138536},
      review={\MR{2417419 (2009g:47028)}},
}

\bib{CLTT}{article}{
      author={Demeter, Ciprian},
      author={Lacey, Michael~T.},
      author={Tao, Terence},
      author={Thiele, Christoph},
       title={Breaking the duality in the return times theorem},
        date={2008},
        ISSN={0012-7094},
     journal={Duke Math. J.},
      volume={143},
      number={2},
       pages={281\ndash 355},
      review={\MR{2420509 (2009f:42013)}},
}

\bib{1203.5135}{article}{
      author={{Do}, Y.},
      author={{Oberlin}, R.},
      author={{Palsson}, E.~A.},
       title={Variational bounds for a dyadic model of the bilinear {H}ilbert
  transform},
        date={2012},
      eprint={arXiv:1203.5135},
}

\bib{DOLA1}{article}{
   author={Do, Yen},
   author={Lacey, Michael},
   title={Weighted bounds for variational Fourier series},
   journal={Studia Math.},
   volume={211},
   date={2012},
   number={2},
   pages={153--190},
   issn={0039-3223},
   review={\MR{2997585}},
   doi={10.4064/sm211-2-4},
}

\bib{DOLA}{article}{
   author={Do, Yen},
   author={Lacey, Michael},
   title={Weighted Bounds for Variational Walsh--Fourier Series},
   journal={J. Fourier Anal. Appl.},
   volume={18},
   date={2012},
   number={6},
   pages={1318--1339},
   issn={1069-5869},
   review={\MR{3000985}},
   doi={10.1007/s00041-012-9231-8},
}

\bib{HL1}{article}{
      author={Hyt{\"o}nen, Tuomas~P.},
      author={Lacey, Michael~T.},
       title={Pointwise convergence of vector-valued {F}ourier series},
        date={2012},
      eprint={arXiv:1205.0261},
         url={http://arxiv.org/abs/1205.0261},
}

\bib{HL}{article}{
      author={Hyt{\"o}nen, Tuomas~P.},
      author={Lacey, Michael~T.},
       title={Pointwise convergence of {W}alsh--{F}ourier series of
  vector-valued functions},
        date={2012},
      eprint={arXiv:1202.0209},
         url={http://arxiv.org/abs/1202.0209},
}

\bib{HLP}{article}{
      author={Hyt{\"o}nen, Tuomas~P.},
      author={Lacey, Michael~T.},
      author={Parissis, Ioannis},
       title={The vector valued quartile operator},
   journal={Collect. Math.},
   volume={64},
   date={2013},
   number={3},
   pages={427--454},
   issn={0010-0757},
   review={\MR{3084406}},
   doi={10.1007/s13348-012-0070-3},
       }

\bib{JSW}{article}{
      author={Jones, Roger~L.},
      author={Seeger, Andreas},
      author={Wright, James},
       title={Strong variational and jump inequalities in harmonic analysis},
        date={2008},
        ISSN={0002-9947},
     journal={Trans. Amer. Math. Soc.},
      volume={360},
      number={12},
       pages={6711\ndash 6742},
         url={http://dx.doi.org/10.1090/S0002-9947-08-04538-8},
      review={\MR{2434308 (2010b:42017)}},
}

\bib{LT:MRL}{article}{
      author={Lacey, Michael},
      author={Thiele, Christoph},
       title={A proof of boundedness of the {C}arleson operator},
        date={2000},
        ISSN={1073-2780},
     journal={Math. Res. Lett.},
      volume={7},
      number={4},
       pages={361\ndash 370},
      review={\MR{1783613 (2001m:42009)}},
}

\bib{Lep}{article}{
      author={L{\'e}pingle, D.},
       title={La variation d'ordre {$p$} des semi-martingales},
        date={1976},
     journal={Z. Wahrscheinlichkeitstheorie und Verw. Gebiete},
      volume={36},
      number={4},
       pages={295\ndash 316},
      review={\MR{0420837 (54 \#8849)}},
}

\bib{1110.1067}{article}{
      author={Oberlin, R.},
   title={Bounds on the Walsh model for $M^{q,*}$ Carleson and related
   operators},
   journal={Rev. Mat. Iberoam.},
   volume={29},
   date={2013},
   number={3},
   pages={829--857},
   issn={0213-2230},
   review={\MR{3090139}},
   doi={10.4171/RMI/742},
}

\bib{1110.1070}{article}{
      author={{Oberlin}, R.},
       title={Estimates for compositions of maximal operators with singular
  integrals},
        date={2011},
      eprint={arXiv:1110.1070},
}

\bib{OSTTW}{article}{
      author={Oberlin, Richard},
      author={Seeger, Andreas},
      author={Tao, Terence},
      author={Thiele, Christoph},
      author={Wright, James},
       title={A variation norm {C}arleson theorem},
        date={2012},
        ISSN={1435-9855},
     journal={J. Eur. Math. Soc. (JEMS)},
      volume={14},
      number={2},
       pages={421\ndash 464},
         url={http://dx.doi.org/10.4171/JEMS/307},
      review={\MR{2881301}},
}

\bib{PSX}{article}{
      author={Parcet, Javier},
      author={Soria, Fernando},
      author={Xu, Quan~Hua},
       title={On the growth of vector-valued {F}ourier series},
        date={2011},
      eprint={arXiv:1109.4313},
}

\bib{PIS}{article}{
      author={Pisier, Gilles},
       title={Martingales with values in uniformly convex spaces},
        date={1975},
        ISSN={0021-2172},
     journal={Israel J. Math.},
      volume={20},
      number={3-4},
       pages={326\ndash 350},
      review={\MR{0394135 (52 \#14940)}},
}

\bib{PX}{article}{
      author={Pisier, Gilles},
      author={Xu, Quan~Hua},
       title={The strong {$p$}-variation of martingales and orthogonal series},
        date={1988},
        ISSN={0178-8051},
     journal={Probab. Theory Related Fields},
      volume={77},
      number={4},
       pages={497\ndash 514},
         url={http://dx.doi.org/10.1007/BF00959613},
      review={\MR{933985 (89d:60086)}},
}

\bib{Rdf:Studia}{article}{
      author={Rubio~de Francia, Jos{\'e}~L.},
       title={{F}ourier series and {H}ilbert transforms with values in {UMD}
  {B}anach spaces},
        date={1985},
        ISSN={0039-3223},
     journal={Studia Math.},
      volume={81},
      number={1},
       pages={95\ndash 105},
      review={\MR{818174 (87d:42008)}},
}

\bib{Rdf:LNM}{incollection}{
      author={Rubio~de Francia, Jos{\'e}~L.},
       title={Martingale and integral transforms of {B}anach space valued
  functions},
        date={1986},
   booktitle={Probability and {B}anach spaces ({Z}aragoza, 1985)},
      series={Lecture Notes in Math.},
      volume={1221},
   publisher={Springer},
     address={Berlin},
       pages={195\ndash 222},
         url={http://dx.doi.org/10.1007/BFb0099115},
      review={\MR{875011 (88g:42020)}},
}

\bib{Silva:BP}{article}{
      author={Silva, Prabath},
       title={Vector valued inequalities for familes of bilinear {H}ilbert
  transform and applications to bi-parameter problems},
        date={2012},
        note={arXiv:1203.3251v1},
}

\bib{TTX}{article}{
      author={Ma, Tao},
      author={Torrea, Jose Luis},
      author={Xu, Quanhua},
       title={Weighted variation inequalities for differential operators and singular integrals},
        date={2013},
        note={arXiv:arXiv:1301.6859},
}

\bib{T}{book}{
      author={Thiele, Christoph},
       title={Wave packet analysis},
      series={CBMS Regional Conference Series in Mathematics},
   publisher={Published for the Conference Board of the Mathematical Sciences,
  Washington, DC},
        date={2006},
      volume={105},
        ISBN={0-8218-3661-7},
      review={\MR{2199086 (2006m:42073)}},
}

\bib{Weisz:Vilenkin}{article}{
      author={Weisz, F.},
       title={Almost everywhere convergence of {B}anach space-valued
  {V}ilenkin-{F}ourier series},
        date={2007},
        ISSN={0236-5294},
     journal={Acta Math. Hungar.},
      volume={116},
      number={1-2},
       pages={47\ndash 59},
         url={http://dx.doi.org/10.1007/s10474-007-5289-1},
      review={\MR{2341038 (2008h:42055)}},
}

\end{biblist}
\end{bibsection}
 \end{document}